\begin{document}

\newcommand{\mmbox}[1]{\mbox{${#1}$}}
\newcommand{\affine}[1]{\mmbox{{\mathbb A}^{#1}}}
\newcommand{\Ann}[1]{\mmbox{{\rm Ann}({#1})}}
\newcommand{\caps}[3]{\mmbox{{#1}_{#2} \cap \ldots \cap {#1}_{#3}}}
\newcommand{\N}{{\mathbb N}}
\newcommand{\Z}{{\mathbb Z}}
\newcommand{\R}{{\mathbb R}}
\newcommand{\KK}{{\mathbb K}}
\newcommand{\A}{{\mathcal A}}
\newcommand{\B}{{\mathcal B}}
\newcommand{\OO}{{\mathcal O}}
\newcommand{\C}{{\mathbb C}}
\newcommand{\PP}{{\mathbb P}}

\newcommand{\Tor}{\mathop{\rm Tor}\nolimits}
\newcommand{\Ext}{\mathop{\rm Ext}\nolimits}
\newcommand{\Hom}{\mathop{\rm Hom}\nolimits}
\newcommand{\im}{\mathop{\rm im}\nolimits}
\newcommand{\rk}{\mathop{\rm rk}\nolimits}
\newcommand{\codim}{\mathop{\rm codim}\nolimits}
\newcommand{\supp}{\mathop{\rm supp}\nolimits}
\newcommand{\coker}{\mathop{\rm coker}\nolimits}
\sloppy

\newtheorem{thm}{Theorem}[subsection]
\newtheorem*{thm*}{Theorem}
\newtheorem{defn}[thm]{Definition}
\newtheorem{prop}[thm]{Proposition}
\newtheorem*{prop*}{Proposition}
\newtheorem{conj}[thm]{Conjecture}
\newtheorem{lem}[thm]{Lemma}
\newtheorem{rmk}[thm]{Remark}
\newtheorem{cor}[thm]{Corollary}
\newtheorem{notation}[thm]{Notation}
\newtheorem{exm}[thm]{Example}

\newcommand{\msp}{\renewcommand{\arraystretch}{.5}}
\newcommand{\rsp}{\renewcommand{\arraystretch}{1}}

\newenvironment{lmatrix}{\renewcommand{\arraystretch}{.5}\small
  \begin{pmatrix}} {\end{pmatrix}\renewcommand{\arraystretch}{1}}
\newenvironment{llmatrix}{\renewcommand{\arraystretch}{.5}\scriptsize
  \begin{pmatrix}} {\end{pmatrix}\renewcommand{\arraystretch}{1}}
\newenvironment{larray}{\renewcommand{\arraystretch}{.5}\begin{array}}
  {\end{array}\renewcommand{\arraystretch}{1}}

\title[Inverse systems and the weak Lefschetz property]
{Inverse systems, Gelfand-Tsetlin patterns\\ and the weak Lefschetz property}

\author{Brian Harbourne}
\address{Harbourne: Mathematics Department \\ University of
  Nebraska \\
    Lincoln \\ NE 61801\\USA}
\email{bharbourne1@math.unl.edu}

\author{Hal Schenck}
\thanks{Schenck supported by NSF 07--07667}
\address{Schenck: Mathematics Department \\ University of
  Illinois \\
    Urbana \\ IL 61801\\USA}
\email{schenck@math.uiuc.edu}

\author{Alexandra Seceleanu}
\address{Seceleanu: Mathematics Department \\ University of
  Illinois \\
    Urbana \\ IL 61801\\USA}
\email{asecele2@math.uiuc.edu}

\subjclass[2000]{13D02, 14J60, 13C13, 13C40, 14F05}
\keywords{Weak Lefschetz property, Artinian algebra, powers of linear 
forms}

\begin{abstract}
\noindent In \cite{MMN}, Migliore--Mir\'o-Roig--Nagel show that the
Weak Lefschetz property can fail for an ideal $I \subseteq 
\KK[x_1,\ldots,x_4]$
generated by powers of linear forms. This is in contrast to the analogous
situation in $\KK[x_1,x_2,x_3]$, where WLP always holds \cite{SS}.
We use the inverse system dictionary to connect $I$ to an ideal of
fat points, and show that failure of WLP for powers of linear forms is 
connected
to the geometry of the associated fat point scheme. Recent results of 
Sturmfels-Xu in \cite{sturm} allow us to relate WLP to Gelfand-Tsetlin patterns.
\end{abstract}

\date{August 9, 2010}

\maketitle

\renewcommand{\thethm}{\thesection.\arabic{thm}}
\setcounter{thm}{0}

\section{Introduction}\label{sec:one}
Let $I \subseteq S=\KK[x_1,\ldots, x_r]$ be an ideal such that
$A=S/I$ is Artinian. Then $A$ has the {\em Weak Lefschetz Property}
(WLP) if there is an $\ell \in S_1$ such that for all
$m$, the map $\mu_{\ell}: A_m \stackrel {\cdot 
\ell}{\longrightarrow}A_{m+1}$
is either injective or surjective. We assume $char(\KK) = 0$; 
this simplifies our use of inverse systems. 
The case $r=1$ is trivial, and WLP always holds for $r=2$ \cite{HMNW}.
For $r=3$, WLP holds for ideals of generic forms \cite{A}, 
complete intersections \cite{HMNW}, ideals with semistable
syzygy bundle and certain splitting type, and almost complete 
intersections with unstable syzygy bundle \cite{BK},
certain monomial ideals \cite{MMN} and ideals generated 
by powers of linear forms \cite{SS}.
The following example of Migliore--Mir\'o-Roig--Nagel \cite{MMN} shows 
that the result of \cite{SS} can fail for $r\ge 4$, and motivates this paper.
\vskip -.5in
\begin{exm}\label{nonWLPlinforms}\rm
$\KK[x_1,x_2,x_3,x_4]/\langle 
x_1^3,x_2^3,x_3^3,x_4^3,(x_1\!+\!x_2\!+\!x_3\!+\!x_4)^3\rangle$
does not have WLP. The Hilbert function of $A$ is
$(1,4,10,15,15,6)$, and $A_3 \rightarrow A_4$ is not full rank.
\end{exm}
\noindent This example is explained by the following result, proved in \S 3.

\begin{prop} For generic forms $l_i \in S_1$ with
$A = S/\langle l_1^t,\ldots, l_n^t\rangle$ Artinian, the map $A_t 
\rightarrow A_{t+1}$
has full rank iff $(r,t,n) \not\in 
\{(4,3,5),(5,3,9),(6,3,14),(6,2,7)\}$.
\end{prop}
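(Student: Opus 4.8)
The plan is to convert the full-rank question for $\mu_\ell$ into a single cokernel computation and then read that cokernel off from fat-point geometry through the inverse system dictionary. Set $a=\dim_\KK A_t$ and $b=\dim_\KK A_{t+1}$, and let $\mu_\ell\colon A_t\to A_{t+1}$ be multiplication by a generic $\ell\in S_1$. Since $\rk\mu_\ell=b-\dim_\KK\coker\mu_\ell$, the map has full rank (i.e.\ rank $\min(a,b)$) if and only if $\dim_\KK\coker\mu_\ell=\max(0,b-a)$, while one always has $\dim_\KK\coker\mu_\ell\ge\max(0,b-a)$. So everything reduces to computing the cokernel. For generic forms the powers $l_1^t,\dots,l_n^t$ are linearly independent (the $t$-th Veronese is nondegenerate), giving $a=\binom{r+t-1}{r-1}-n$ whenever $n\le\binom{r+t-1}{r-1}$; in the complementary range $A_t=0$ and full rank is automatic, so I assume $n\le\binom{r+t-1}{r-1}$.

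First I would identify the cokernel with a smaller algebra of the same shape. As $\coker\mu_\ell=(A/\ell A)_{t+1}$ and $A/\ell A=S/(I+(\ell))\cong \bar S/\langle \bar l_1^t,\dots,\bar l_n^t\rangle$ with $\bar S=\KK[x_1,\dots,x_{r-1}]$ and $\bar l_i$ the generic images of the $l_i$, the cokernel is governed by $n$ generic $t$-th powers in one fewer variable. Applying the inverse system dictionary in $\bar S$ (this is where $\mathrm{char}\,\KK=0$ is used) gives $\dim_\KK\coker\mu_\ell=\dim_\KK (I_W)_{t+1}$, where $W\subseteq\PP^{r-2}$ is the scheme of $n$ generic double points, the multiplicity $2$ arising as $(t+1)-t+1$. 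The same dictionary identifies $b=\dim_\KK(I_Y)_{t+1}$ for $n$ generic double points $Y\subseteq\PP^{r-1}$.

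With these translations the problem becomes a question about when generic double points impose independent conditions, which is the content of the Alexander--Hirschowitz theorem and is exactly where the Sturmfels--Xu computation of such Hilbert functions via Gelfand--Tsetlin patterns enters. Let $e=\binom{r+t-1}{r-2}-n(r-1)$ be the expected value of $\dim_\KK(I_W)_{t+1}$, and write $b_{\mathrm{exp}}=\binom{r+t}{r-1}-nr$. Pascal's identity gives $b_{\mathrm{exp}}=a+e$, and $b\ge b_{\mathrm{exp}}$, so $b-a\ge e$. Hence if $W$ imposes independent conditions then $\dim_\KK\coker\mu_\ell=\max(0,e)$, and the chain $\max(0,e)\le\max(0,b-a)\le\dim_\KK\coker\mu_\ell$ collapses to equalities, giving full rank. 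Therefore failure can occur only at triples where $W\subseteq\PP^{r-2}$ is Alexander--Hirschowitz defective in degree $t+1$, that is, in the family $t+1=2$ or at the four sporadic triples $(r-2,t+1,n)\in\{(2,4,5),(3,4,9),(4,4,14),(4,3,7)\}$.

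It then remains to run through this short defective locus. The family $t+1=2$ forces $t=1$, where $A$ is a polynomial ring modulo linear forms whose relevant graded pieces vanish once $A$ is Artinian, so full rank holds vacuously; this explains the absence of the $d=2$ family from the statement. For each of the four sporadic triples the defect of $W$ equals $1$, while $Y\subseteq\PP^{r-1}$ is itself nondefective there, so $b-a=e\le 0$ and $\max(0,b-a)=\max(0,e)=0$; the extra dimension in the cokernel then forces a genuine rank drop. Under $N=r-2$, $d=t+1$ these four triples become exactly $(4,3,5),(5,3,9),(6,3,14),(6,2,7)$, matching the statement and Example~\ref{nonWLPlinforms}. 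I expect the main obstacle to be the Alexander--Hirschowitz dimension counts themselves: the reduction to whether generic double points impose independent conditions is clean, but establishing those counts in the borderline degrees is the substantive input, and it is here that the Sturmfels--Xu description via Gelfand--Tsetlin patterns does the real work. A useful consistency check during this bookkeeping is that the sporadic exceptions of the \emph{target} $Y$, occurring at the different triples $(3,3,5),(4,3,9),(5,3,14),(5,2,7)$, create no failures, since there $e<0$ forces $\dim_\KK(I_W)_{t+1}=0=\max(0,b-a)$, in harmony with the fact that WLP never fails for $r=3$.
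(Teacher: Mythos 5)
Your proposal is correct, and it reaches the statement by a genuinely different route from the paper's. The paper proves this as Proposition \ref{AHcor}(c) together with Corollary \ref{WLPfail2}, working through the syzygy bundle: injectivity of $\mu_\ell$ is controlled via the long exact sequence \eqref{les} by $h^0({\mathcal S}(I)|_L(t+1))=h^1(D'_{t+1})$, Alexander--Hirschowitz decides when this vanishes, and non-surjectivity in the four exceptional cases is then verified against explicitly computed Hilbert series. You bypass sheaf cohomology entirely: full rank becomes $\dim_\KK\coker\mu_\ell=\max(0,b-a)$, the cokernel $(A/\ell A)_{t+1}$ is identified by the inverse-system dictionary (Corollary \ref{invDimCor} in $r-1$ variables) with $\dim_\KK(I_W)_{t+1}$ for $n$ generic double points $W\subset\PP^{r-2}$ --- the same geometric object the paper encodes as $h^1(D'_{t+1})$ --- and the Pascal identity $b_{\mathrm{exp}}=a+e$ collapses the chain $\max(0,e)\le\max(0,b-a)\le\dim_\KK\coker\mu_\ell$ whenever $W$ is nondefective. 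Your handling of the four exceptional triples is arguably slicker than the paper's: nondefectivity of $Y\subset\PP^{r-1}$ there gives $b-a=e\le0$ directly, so the AH defect $\dim_\KK(I_W)_{t+1}=1$ forces the rank drop without ever quoting the Hilbert series $(1,4,10,15,15,6)$, etc. What the paper's heavier bundle-theoretic setup buys is the finer information in Proposition \ref{AHcor}(a),(b),(d) (separate injectivity and surjectivity ranges), which is reused throughout \S\S 4--5, whereas your argument yields only the full-rank dichotomy. Three small repairs: your closing attribution of the dimension counts to Sturmfels--Xu and Gelfand--Tsetlin patterns is off --- that machinery enters the paper only in \S 5, for $n=r+1,r+2$ forms, and the only substantive input here is Alexander--Hirschowitz exactly as you invoke it; your application of AH in $\PP^{r-2}$ implicitly needs $r\ge4$, so for $r=2,3$ one should add the easy observation that double points on $\PP^1$ (resp.\ the one-variable quotient) are never defective, a restriction the paper's own proof shares implicitly; and in your consistency check the phrase ``$e<0$ forces $\dim_\KK(I_W)_{t+1}=0$'' is false in general (defectivity is precisely its failure) --- what is true, and what you need, is that $W$ is AH-nondefective at those four triples.
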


The failure of WLP in Example~\ref{nonWLPlinforms} stems from the fact 
that the space
of quartics in $\PP^2$ passing through five double points is nonempty:
WLP fails for
geometric reasons. We use inverse systems to translate questions about 
powers of linear forms to
questions about ideals of fatpoints. Then results of Alexander-Hirschowitz 
\cite{ah}, Nagata \cite{N}
and De Volder-Laface \cite{DL} can be applied to the syzygy bundle 
\cite{HMNW}, which
allows us to analyze WLP for $r=4$ when $n=5,6,7,8$ (see \S 4):

\begin{thm}
Let $I=\langle l_1^t,\ldots,l_n^t\rangle \subseteq \KK[x_1,x_2,x_3,x_4]$ with
$l_i \in S_1$ generic. If $n \in \{5,6,7,8 \}$, then WLP fails, respectively,
for $t \ge \{3,27,140,704\}$.
\end{thm}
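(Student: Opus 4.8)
The plan is to translate the failure of WLP into a non-vanishing-\emph{and}-superabundance statement for a linear system of plane curves through general multiple points, and then to read off the thresholds from the geometry of del Pezzo surfaces. Fix a generic $\ell \in S_1$ and study the maps $\mu_\ell \colon A_m \to A_{m+1}$. Setting $\bar S = S/(\ell) \cong \KK[x_1,x_2,x_3]$ and $\bar A = \bar S/\langle \bar l_1^t,\ldots,\bar l_n^t\rangle$, the images $\bar l_i$ are $n$ generic linear forms, and algebraically $\coker\mu_\ell = [\bar A]_{m+1}$. From $0 \to \ker\mu_\ell \to A_m \to A_{m+1} \to \bar A_{m+1} \to 0$ one sees that $\mu_\ell$ has full rank exactly when $\ker\mu_\ell = 0$ or $\bar A_{m+1}=0$, so WLP fails precisely when, for some $m$, the map is simultaneously non-injective and non-surjective. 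The inverse system dictionary in three variables now identifies $\dim\bar A_{m+1} = h^0(\mathcal L)$, where $\mathcal L = \mathcal L_{d}(\mu^n)$ is the system of plane curves of degree $d = \mu + t - 1$ with an assigned point of multiplicity $\mu := m-t+2$ at each of $n$ generic points of $\PP^2$. This is the geometric origin of the ``quartics through five double points'' of Example~\ref{nonWLPlinforms} (there $n=5$, $t=3$, $\mu=2$, $d=4$).

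Next I would recast both failure conditions through $\mathcal L$. Non-surjectivity of $\mu_\ell$ is exactly the effectivity $h^0(\mathcal L) > 0$. For non-injectivity, applying the inverse system in $\PP^3$ to compute $\dim A_m$ and $\dim A_{m+1}$ as fat-point systems of multiplicities $m-t+1$ and $m-t+2$ gives $\dim A_m - \dim A_{m+1} = -\chi(\mathcal L)$, whence $\dim\ker\mu_\ell = h^0(\mathcal L) - \chi(\mathcal L) = h^1(\mathcal L)$, the superabundance (speciality) of $\mathcal L$; here I use that the two ambient $\PP^3$ systems are non-defective, which by Alexander--Hirschowitz \cite{ah} excludes only the finite list already recorded in the Proposition above. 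Thus, away from finitely many $(t,m)$, WLP fails at $\mu_\ell$ if and only if $\mathcal L_{\mu+t-1}(\mu^n)$ is both \emph{effective} and \emph{special}.

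Finally I would exhibit, for each $n \in \{5,6,7,8\}$ and each $t$ above the stated bound, a value of $\mu$ (hence $m$) making $\mathcal L = \mathcal L_{\mu+t-1}(\mu^n)$ effective and special. Blowing up the $n \le 8$ generic points makes $\PP^2$ into a del Pezzo surface, and the two invariants of $L = (\mu+t-1)H - \mu\sum_{i=1}^n E_i$ are governed by its intersections with the finitely many $(-1)$-curves, via Nagata \cite{N} and De Volder--Laface \cite{DL}. Speciality is the easy half: it follows as soon as some $(-1)$-curve $C$ has $L\cdot C \le -2$, e.g.\ the conic $2H - \sum E_i$ when $n=5$, which forces a multiple base curve and $h^1(L)>0$; asymptotically this holds for $d \lesssim c_n\mu$ with $c_n$ set by the extremal $(-1)$-curve. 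The main obstacle is the effectivity (the non-surjective half): $L$ is symmetric and must be shown nonempty in the regime $d \approx \sqrt{n}\,\mu$ where Nagata's bound is sharp, and for $n = 6,7,8$ there is \emph{no} symmetric $(-1)$-curve to supply a section directly (the Diophantine system $a^2 - nb^2 = -1$, $3a - nb = 1$ has no solution with equal multiplicities), so one must assemble an effective special divisor from the asymmetric $(-1)$-curves and compute its cohomology on the del Pezzo surface. Choosing $\mu$ as large as speciality permits while keeping $L$ effective, and solving the resulting integrality constraint $\sqrt{n}\,\mu \lesssim \mu + t - 1 \lesssim c_n\mu$ for the least admissible $t$, is where the explicit thresholds $\{3,27,140,704\}$ appear; below them the window $[\sqrt{n}\,\mu,\,c_n\mu]$ contains no integer $d$ of the required form, so $A_t \to A_{t+1}$ and all neighboring maps retain full rank.
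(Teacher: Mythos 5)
Your reduction is essentially the paper's: restrict to $L=V(\ell)$, so that $\coker\mu_\ell=\bar A_{m+1}$ has dimension $h^0(D'_{m+1})$ by Corollary~\ref{invDimCor}, and the kernel of $\mu_\ell$ is controlled by $h^1(D'_{m+1})$ once the ambient $\PP^3$ systems are under control (Remark~\ref{sect2Rem}). But the tool you invoke for that control is wrong: Alexander--Hirschowitz \cite{ah} treats only \emph{double} points, whereas the relevant systems $D_m=mE_0-(m-t+1)\sum_i E_i$ on $\PP^3$ have multiplicity growing like $2t/3$ at the degrees in play. What is actually needed is De Volder--Laface \cite{DL} (non-speciality on the blow-up of $\le 8$ general points of $\PP^3$ when the degree exceeds twice the multiplicity minus two), and it is this result that confines the whole argument to the window $t\le m\le 2t-2$. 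You do cite \cite{DL}, but deploy it on the plane side, where the correct input is Nagata \cite{N}, packaged in the paper as Lemmas~\ref{possSHGH} and~\ref{stronger}. Without the $\PP^3$ input the identity $\dim\ker\mu_\ell=h^1(\mathcal{L})$ is false in general --- one only gets $\dim\ker\mu_\ell\ge h^1(D'_{m+1})-h^1(D_{m+1})$ --- and the correction term is genuinely nonzero nearby (e.g.\ $h^1(D_m)=6$ at $n=6$, $t=15$ in the proof of Lemma~\ref{6forms}).

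The more serious gap is the final step, where the constants are supposed to appear. For $n\le 8$ the effective cone is rational polyhedral, with slopes $2$, $12/5$, $21/8$, $48/17$ for $n=5,6,7,8$ (cut out by the nef classes $2E_0-\sum_{i\le4}E_i$, $5E_0-2\sum E_i$, $8E_0-3\sum E_i$, $17E_0-6\sum E_i$ appearing in the proof of Lemma~\ref{stronger}), not $\sqrt{n}$; and with the correct slopes, effective-and-special uniform systems occur for many $t$ far below $140$ and $704$. For instance, with $n=7$ take $\mathcal{L}=6Q$ where $Q=21E'_0-8(E'_1+\cdots+E'_7)$: it is effective and meets the $(-1)$-curve $3E'_0-2E'_1-E'_2-\cdots-E'_7$ in $-6$, giving $d=126$, $\mu=48$, i.e.\ $t=79$, well inside the \cite{DL} window (similarly $2Q$ gives $t=63$ for $n=8$); so window-emptiness cannot be where the thresholds come from. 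Nor is your converse claim true that below the thresholds all neighboring maps retain full rank: WLP fails at $n=6$, $t=15$ and holds at $t=26$, so the bad set of $t$ is not upward-closed from any window bound. In the paper the constants have a different origin: non-surjectivity is obtained \emph{not} from effectivity of $D'_m$ but from the Hilbert-function inequality $\dim A_m\ge\dim A_{m-1}$ at the first degree $m$ (roughly $5t/3$, $8t/5$, $17t/11$) where the extremal $(-1)$-curve forces $h^1(D'_m)>0$ irrespective of effectivity --- this is precisely the point of Lemma~\ref{stronger}, which eliminates the effectivity verification you flag as ``the main obstacle'' --- and the resulting cubic in $t$ from Equations~\eqref{6pteqn} and \eqref{npteqn} is non-negative exactly for $t\ge 48,140,704$, with the $n=6$ range $27\le t\le 47$ closed by individual computation together with \cite[Proposition~2.1]{MMN}. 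So your outline, once repaired with the \cite{DL} input, does yield failure statements (arguably in a wider range of degrees), but the mechanism you describe neither produces nor explains the numbers $\{3,27,140,704\}$.
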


This is surprising: for $I \subseteq \KK[x_1,x_2,x_3,x_4]$ generated
by general forms, Migliore and Miro-Roig show in \cite{MMgenforms} 
that the quotient ring always has WLP. It also contrasts to most known
cases of powers of linear forms: WLP always holds in the three variable 
case \cite{SS} and for complete intersections (i.e., $r=n$). The result on 
complete intersections is due to Stanley \cite{Stan}, who showed that if 
$I = \langle l_1^{t_1},\ldots,l_n^{t_n}\rangle$ is a complete intersection, 
then $S/I$ has the strong Lefschetz property.
In \S 5 we use this and results of D'Cruz-Iarrobino \cite{DCI}  to prove
\begin{thm}
For $I=\langle l_1^t,\ldots,l_{r+1}^t\rangle \subseteq \KK[x_1,\ldots, x_r]$ 
with $l_i \in S_1$ generic, $r=2k$, $k \geq 2$ and $t \gg 0$, 
WLP fails in degree $\frac{r}{2}(t-1)-1$.
\end{thm}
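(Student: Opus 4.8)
The plan is to realize $A$ as a quotient of a complete intersection and reduce failure of WLP to the nonemptiness of a linear system of fat points. Write $S=\KK[x_1,\dots,x_r]$, let $B=S/\langle l_1^t,\dots,l_r^t\rangle$ be the complete intersection cut out by the first $r$ powers, and set $g=\overline{l_{r+1}^t}\in B_t$, so that $A=B/(g)$. Since the $l_i$ are generic, $B$ is Artinian Gorenstein with socle degree $N=r(t-1)$, and its Hilbert function $\big(\tfrac{1-z^t}{1-z}\big)^r$ is symmetric and unimodal about the single central degree $m=\tfrac{r}{2}(t-1)=k(t-1)$; here the evenness $r=2k$ is used, as it makes $N$ even and the peak a single degree. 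By Stanley's theorem $B$ has the strong Lefschetz property, and a generic linear form — hence $l_{r+1}$ — is a strong Lefschetz element, so $\times g=\times l_{r+1}^t\colon B_{d-t}\to B_d$ has maximal rank for every $d$.

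First I would compute the Hilbert function of $A$ near degree $m$. Because $m-1,m\le (N+t)/2$, strong Lefschetz injectivity of $\times g$ below the centre forces $h_A(d)=h_B(d)-h_B(d-t)$ for $d\in\{m-1,m\}$; this is where the D'Cruz--Iarrobino computation of the Hilbert function is invoked, and from the explicit symmetric form of $h_B$ one reads off $h_A(m-1)\ge h_A(m)$. (For $r=4,\,t=3$ this gives $h_A=(1,4,10,15,15,6)$, recovering Example~\ref{nonWLPlinforms}.) Consequently, for a generic linear form $\ell'$ the map $\mu_{\ell'}\colon A_{m-1}\to A_m$ has maximal rank if and only if it is surjective, so WLP fails in degree $m-1$ precisely when $\mu_{\ell'}$ is not surjective.

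Next I would identify the cokernel of $\mu_{\ell'}$. Since $\coker\mu_{\ell'}=\big(A/\ell'A\big)_m=\big(S/(I+(\ell'))\big)_m$, restricting to the generic hyperplane $\ell'=0$ yields $\coker\mu_{\ell'}\cong(\bar S/\bar I)_m$, where $\bar S=S/(\ell')$ is a polynomial ring in $r-1$ variables and $\bar I=\langle \bar l_1^t,\dots,\bar l_{r+1}^t\rangle$ is generated by $r+1$ generic $t$-th powers. Thus it suffices to prove $(\bar S/\bar I)_m\ne 0$. By the inverse system dictionary this dimension equals that of the linear system of degree-$m$ hypersurfaces in $\PP^{r-2}$ having multiplicity $m-t+1=(k-1)(t-1)$ at each of the $r+1=2k+1$ general points dual to the $\bar l_i$, and the task becomes to show this system is nonempty for $t\gg 0$.

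The main obstacle is exactly this nonemptiness. The virtual dimension of the system is negative for $t\gg 0$: its leading term in $t$ is $\tfrac{1}{(2k-2)!}\big(k^{2k-2}-(2k+1)(k-1)^{2k-2}\big)t^{2k-2}$, whose coefficient is negative for every $k\ge 2$ (indeed $(k/(k-1))^{2k-2}\to e^2<2k+1$), so the system is expected to be empty and nonemptiness is a genuine superabundance phenomenon. The geometry that rescues us is that the $2k+1=(2k-2)+3$ general points of $\PP^{2k-2}$ lie on a unique rational normal curve $C$ of degree $2k-2$ (for $k=2$ this is the conic through five points, the double conic giving the quartic of Example~\ref{nonWLPlinforms}); concentrating vanishing along $C$, via high powers of $I_C$ as in the constructions of Nagata and De~Volder--Laface, produces for $t\gg 0$ a nonzero degree-$m$ form singular to order $(k-1)(t-1)$ at all the points. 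Verifying that the numerics of this construction actually deliver a section in degree $m=k(t-1)$ — equivalently, that the fat point scheme is superabundant in this degree — is the crux, and it is here that the evenness $r=2k$ and the hypothesis $t\gg 0$ are both essential.
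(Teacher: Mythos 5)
Your overall architecture coincides with the paper's: identify the cokernel of $\mu_{\ell}\colon (A_{2k,t})_c\to (A_{2k,t})_{c+1}$, $c+1=k(t-1)$, with $(B_{2k-1,t})_{c+1}$ (restriction to a generic hyperplane; this is the paper's sequence \eqref{4termSeq}), show that this cokernel is nonzero, and combine with the dimension inequality $\dim_{\KK}(A_{2k,t})_c\ge \dim_{\KK}(A_{2k,t})_{c+1}$ obtained from Stanley's strong Lefschetz property for the complete intersection (Lemma~\ref{Tonylemma}). But at the decisive step --- the nonvanishing $(B_{2k-1,t})_{k(t-1)}\ne 0$, which you correctly translate into nonemptiness of the system of degree $k(t-1)$ hypersurfaces in $\PP^{2k-2}$ with multiplicity $(k-1)(t-1)$ at $2k+1$ general points --- you stop at a sketch and yourself call the verification ``the crux.'' That is a genuine gap: the construction is never carried out, and the tools you invoke do not cover it (Nagata's results concern $\PP^2$ and De~Volder--Laface concern $\PP^3$, whereas for $k>2$ you are in $\PP^{2k-2}$). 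The paper closes exactly this hole by quoting D'Cruz--Iarrobino (Lemma~\ref{Tonylemma2}, from \cite{DCI}): for $r-1$ odd, the socle degree of $B_{r-1,t}$ equals $(t-1)\frac{r}{2}$, which gives the required nonvanishing immediately. Your geometric instinct is in fact completable, and it is what underlies the D'Cruz--Iarrobino statement: the $2k+1=(2k-2)+3$ points lie on a rational normal curve $C$ of degree $2k-2$, whose $(k-1)$-secant variety is the catalecticant hypersurface of degree $k$ with multiplicity $k-1$ along $C$; its $(t-1)$-st power is a form of degree $k(t-1)$ with multiplicity $(k-1)(t-1)$ at the points. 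But as written, your proof does not establish this.

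A secondary gap: the inequality $h_A(c)\ge h_A(c+1)$ is not something one ``reads off'' from the symmetry of $h_B$. After the Hilbert function formula it is equivalent to
\[
\sum_{j=0}^{k-1}(-1)^{j+1}\binom{2k-2-k+(k-j)t}{2k-2}\binom{2k+1}{j}\ \ge\ 0,
\]
a polynomial of degree $2k-2$ in $t$ whose leading coefficient is $\frac{\alpha}{(2k-2)!}$ with $\alpha$ a difference of two central Eulerian numbers; proving $\alpha>0$ (Eulerian numbers increase up to the middle index) is a real step in the paper, and it is the only place the hypothesis $t\gg 0$ enters the dimension count. You do perform a leading-coefficient analysis, but of the wrong quantity --- the virtual dimension of the fat-point system, which is context rather than a needed ingredient --- while the leading-coefficient analysis actually required, for $h_A(c)-h_A(c+1)$, is asserted without proof. (You should also record the hypothesis $t\ge k+1$ ensuring that the truncation index $m$ in Lemma~\ref{Tonylemma} equals $k-1$ in both degrees, so that the same alternating sums are being compared.)
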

\noindent 
Migliore-Miro-Roig-Nagel \cite{MMRN2} have recently 
strengthened this result to hold for all $t$. 
They also obtain very precise results on WLP for 
almost complete intersections for $r=4,5$, when the 
powers of the linear forms are not uniform.
Using a result of Sturmfels-Xu on Gelfand-Tsetlin patterns,
we obtain partial results for $r$ odd. Based on our results and 
computational evidence, we believe
\begin{conj}\label{mainConj}
For $I=\langle l_1^t,\ldots,l_n^t\rangle \subseteq \KK[x_1,\ldots, x_r]$ with
$l_i \in S_1$ generic and $n \ge r+1\geq 5$, WLP fails for all $t \gg 0$.
\end{conj}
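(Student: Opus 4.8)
The plan is to show that for a generic $\ell \in S_1$ the multiplication map $\mu_\ell : A_m \to A_{m+1}$ fails to have full rank in a single critical degree $m = m(r,t)$; in characteristic zero this is equivalent to the failure of WLP, since a semicontinuity argument shows that if some linear form works in all degrees then a generic one does. First I would use the inverse system dictionary of \S 3 to replace $A = S/\langle l_1^t,\ldots,l_n^t\rangle$ by the scheme $Z$ of $n$ generic fat points in $\PP^{r-1}$ dual to the $l_i$: in the relevant range one has $\dim_\KK A_d = \dim_\KK [I_Z]_d$ where $Z$ carries multiplicity $d+1-t$ at each point, so that the full-rank question for $\mu_\ell$ becomes a question about how these fat-point linear systems change as the degree is raised by one. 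Equivalently I would package the $l_i^t$ into the syzygy bundle $E$ on $\PP^{r-1}$ defined by
$$0 \to E \to \bigoplus_{i=1}^n \OO_{\PP^{r-1}}(-t) \xrightarrow{(l_1^t,\ldots,l_n^t)} \OO_{\PP^{r-1}} \to 0,$$
which is a bundle of rank $n-1$ because $n \ge r+1$ forces the $l_i^t$ to have no common zero. Taking cohomology identifies $A_m$ with $H^1(\PP^{r-1}, E(m))$ and $\mu_\ell$ with the induced map on $H^1$; following \cite{HMNW}, its kernel and cokernel are then controlled by the cohomology of $E$ and of its restriction $E|_H$ to a generic hyperplane $H$.

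The geometric heart of the argument is to exhibit, for each admissible $r$ and all $t \gg 0$, a critical degree $m$ in which $\dim A_m$ and $\dim A_{m+1}$ are forced to be so close — typically equal — that full rank would require $\mu_\ell$ to be an isomorphism, and then to produce an element of $\ker \mu_\ell$ from an unexpected section of the restricted bundle $E|_H$, equivalently from an unexpected hypersurface through the fat points. Locating $m$ and certifying the dimension coincidence is where the geometry of generic fat points enters: the defectivity classification of Alexander--Hirschowitz \cite{ah}, together with Nagata \cite{N} and De~Volder--Laface \cite{DL}, gives the exact dimensions of these systems in $\PP^3$ and is precisely what allows the syzygy-bundle analysis to be completed for $n = 5,6,7,8$ in Theorem~1.2.

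For the overall shape of the conjecture I would split on the parity of $r$, concentrating first on the almost complete intersection $n = r+1$. When $r = 2k$ is even the ideal contains the complete intersection $\langle l_1^t,\ldots,l_r^t\rangle$, whose Hilbert function is symmetric and unimodal by Stanley's strong Lefschetz theorem \cite{Stan}; feeding this into the D'Cruz--Iarrobino analysis \cite{DCI} of the effect of the extra generator $l_{r+1}^t$ pins down the Hilbert function of $A$ exactly, and a direct comparison of $\dim A_m$ with $\dim A_{m+1}$ at $m = \frac{r}{2}(t-1)-1$ forces the rank drop, which is Theorem~1.3. When $r$ is odd the complete-intersection symmetry is lost, and here I would invoke the Sturmfels--Xu description \cite{sturm}: the dimensions $\dim [I_Z]_d$ of the relevant fat-point systems are counts of Gelfand--Tsetlin patterns, and the aim is to read off combinatorially that two consecutive counts are incompatible with full rank of $\mu_\ell$.

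The main obstacle, and the reason the statement is posed as a conjecture rather than a theorem, is twofold. First, upgrading a dimension count — the Hilbert function of $A$, or the superabundance of the fat-point system — into an actual rank drop of the specific map $\mu_\ell$ requires knowing that the unexpected sections do not propagate under multiplication by $\ell$, a statement about the maps between cohomology groups and not merely their dimensions. Second, securing this uniformly for all $t \gg 0$ and all $n \ge r+1$, rather than in the finitely many cases reachable by Alexander--Hirschowitz or by an explicit Gelfand--Tsetlin count, would require asymptotic control of the defectivity of generic multiple points in $\PP^{r-1}$ that is not presently available. The realistic target of the plan is therefore to convert each instance of the conjecture into a fat-point non-vanishing statement and to settle those instances for which the defectivity of generic fat points is understood.
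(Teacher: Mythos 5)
This statement is Conjecture \ref{mainConj} of the paper, and the paper contains no proof of it: it is offered as a conjecture supported by three families of partial results, namely Theorem \ref{Main2} ($r=4$, $n=5,6,7,8$, via Alexander--Hirschowitz, Nagata and De Volder--Laface applied to the syzygy bundle), Theorem \ref{FailR1} (even $r$, $n=r+1$, $t\gg0$, via Stanley's strong Lefschetz result and the D'Cruz--Iarrobino socle-degree lemma), and the conditional Proposition \ref{sect5prop} (odd $r$, via the Sturmfels--Xu Gelfand--Tsetlin count). Your proposal reconstructs this program faithfully --- the inverse-system dictionary (which is \S 2 of the paper, not \S 3), the identification $A_m\cong H^1(\mathcal{S}(I)(m))$ with kernel of $\mu_\ell$ controlled by $h^0(\mathcal{S}(I)|_L(m+1))=h^1(D'_{m+1})$, and the parity split at $n=r+1$ --- and your closing paragraph honestly concedes that it is a program rather than a proof. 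One technical point in your diagnosis should be corrected: your first stated obstacle (that one must show unexpected sections ``do not propagate'' under $\mu_\ell$, a statement about maps rather than dimensions) is not actually where the difficulty lies. The paper sidesteps any map analysis by working in degrees where $h^0(\mathcal{S}(I)(m))=h^1(D_m)=0$; then the long exact sequence \eqref{les} makes $H^0(\mathcal{S}(I)|_L(m))$ surject onto $\ker(A_{m-1}\to A_m)$, so pure dimension counts on $\PP^{r-1}$ and on the restricted system suffice. This is exactly how Lemma \ref{5forms} runs, with \cite{DL} supplying $h^1(D_m)=0$ throughout the window $t\le m\le 2t-2$.

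The genuine gap --- the reason neither you nor the paper can close the argument --- is the one you name second, and it can be located precisely. (i) For $r=4$ and $n\ge9$, producing the needed irregularity $h^1(D'_m)>0$ on $\PP^2$ requires Lemma \ref{stronger}, which is proved only for $1<n\le8$, $n\ne4$; beyond that one is squarely inside the open SHGH Conjecture \ref{SHGH} (the paper notes Ciliberto--Miranda reach only uniform multiplicity $\le12$). (ii) For $r\ge5$ there is no analogue of the De Volder--Laface theorem, so even the regularity half $h^1(D_m)=0$ on $\PP^{r-1}$ --- the ingredient that makes the exact-sequence trick work --- is unavailable. (iii) For odd $r$ the socle degree $SD(B_{2k,t})$ is unknown: the paper shows that Lemma 3 of \cite{DCI}, which asserts $SD(B_{2k,t})=(t-1)k$, is incorrect for $4\le t\le14$, leaving only the bounds \eqref{DCIeqn}, so Proposition \ref{sect5prop}(b) applies only under an unverified hypothesis on the socle degree. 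Thus the conjecture was open when the paper was written (with \cite{MMRN2} subsequently strengthening the even almost-complete-intersection case to all $t$), and your proposal, while an accurate map of the terrain, correctly does not claim to prove it.
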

\renewcommand{\thethm}{\thesubsection.\arabic{thm}}
\setcounter{thm}{0}

\section{Background}\label{sec:two}
\subsection{Inverse systems} In \cite{ei}, Emsalem and Iarrobino proved
that there is a close connection between ideals generated by powers of 
linear
forms, and ideals of fatpoints.
Let $p_i = [p_{i1}:\cdots :p_{ir}] \in \mathbb{P}^{r-1}$, $I(p_i) =
\wp_i \subseteq R = {\KK}[y_1,\ldots, y_r]$, and
$\{ p_1, \ldots , p_n \} \subseteq \mathbb{P}^{r-1}$ be a set of distinct 
points.
A fat point ideal is an ideal of the form
$$F = \bigcap\limits_{i=1}^n \wp_i^{\alpha_i+1}\subset R.$$
Recall $S = {\KK}[x_1,\ldots, x_r]$ and let $L_{p_i} = 
\sum_{j=1}^rp_{i_j}x_j$.
Define an action of $R$ on $S$ by partial differentiation:
$y_j \cdot x_i = \partial x_i/\partial x_j$.
Since $F$ is a submodule of $R$, it acts on $S$. The set of elements
annihilated by the action of $F$ is denoted $F^{-1}$. Emsalem and
Iarrobino show that for $j \geq \max \{\alpha_i+1\}$,
$(F^{-1})_j = \langle L_{p_1}^{j-\alpha_1}, \ldots, L_{p_m}^{j-\alpha_m} 
\rangle_j$, and that
$\dim_{\KK}(F^{-1})_j = \dim_{\KK}(R/F)_j$.
This generalizes Terracini's lemma, where the $\alpha_i$ are all two.
For more on inverse systems, see \cite{g}.
\begin{thm}[Emsalem and Iarrobino, \cite{ei}]\label{invDim}
Let $F$ be an ideal of fatpoints:
$$F = \wp_1^{\alpha_1+1} \cap \cdots
\cap \wp_n^{\alpha_n+1}\subset R.$$
Then
$$(F^{-1})_j = \begin{cases} S_j & \mbox{for } j \leq \max\{\alpha_i\} \cr
                                &                         \cr
        L_{p_1}^{j-\alpha_1}S_{\alpha_1} + \cdots + 
L_{p_n}^{j-\alpha_n}S_{\alpha_n} &
                                \mbox{for }
j\geq\max\{ \alpha_i + 1\} \end{cases}
$$
and
$$
\dim _{\KK}(F^{-1})_j = \dim _{\KK}(R/F)_j.
$$
\end{thm}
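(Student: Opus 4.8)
The plan is to deduce both assertions from the classical \emph{apolarity pairing}. For each degree $j$ the differentiation action defines a bilinear map $R_j \times S_j \to \KK$, $(g,f)\mapsto g\cdot f$, landing in the constants; on monomials one computes $y^\beta\cdot x^\gamma = \beta!\,\delta_{\beta\gamma}$, so in characteristic zero (where $\beta!\neq 0$) the monomial bases of $R_j$ and $S_j$ are dual up to nonzero scalars and the pairing is perfect. With this in hand I would record the orthogonal-complement toolkit: involutivity $(U^{\perp})^{\perp}=U$ and the De Morgan rule $\big(\bigcap_i W_i\big)^{\perp}=\sum_i W_i^{\perp}$. I would then show $(F^{-1})_j=(F_j)^{\perp}$ for any homogeneous ideal $F$, with $(F_j)^\perp\subseteq S_j$ taken for the degree-$j$ pairing. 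The nontrivial inclusion is that $f\in S_j$ orthogonal to $F_j$ is annihilated by all of $F$: for homogeneous $g_d\in F_d$ with $d<j$, the element $g_d\cdot f$ lies in $S_{j-d}$, and since $y^\beta g_d\in F_j$ for every $|\beta|=j-d$ we get $y^\beta\cdot(g_d\cdot f)=(y^\beta g_d)\cdot f=0$; nondegeneracy of the degree-$(j-d)$ pairing forces $g_d\cdot f=0$. Taking dimensions gives immediately $\dim_\KK(F^{-1})_j=\dim_\KK S_j-\dim_\KK F_j=\dim_\KK(R/F)_j$, which is the second assertion of the theorem.

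For the first assertion I would split on $j$. When $j\le\max\{\alpha_i\}$, choose $i_0$ with $\alpha_{i_0}$ maximal; since $F\subseteq\wp_{i_0}^{\alpha_{i_0}+1}$, a nonzero form in $F_j$ would vanish to order $\ge\alpha_{i_0}+1>j$ at $p_{i_0}$, impossible for a form of degree $j$, so $F_j=0$ and $(F^{-1})_j=0^{\perp}=S_j$. When $j\ge\max\{\alpha_i+1\}$ I use $F_j=\bigcap_i(\wp_i^{\alpha_i+1})_j$ together with the De Morgan rule to get $(F^{-1})_j=(F_j)^{\perp}=\sum_i(\wp_i^{\alpha_i+1})_j^{\perp}$, reducing everything to the single-point identity $(\wp_i^{\alpha_i+1})_j^{\perp}=L_{p_i}^{\,j-\alpha_i}S_{\alpha_i}$.

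Finally I would prove this single-point identity. After a $GL_r$ change of coordinates on $S$ (with the contragredient change on $R$, under which the action is equivariant and the pairing invariant) I may assume $p_i=[1:0:\cdots:0]$, so that $L_{p_i}=x_1$ and $\wp_i=\langle y_2,\ldots,y_r\rangle$. By the first paragraph, $(\wp_i^{\alpha_i+1})_j^{\perp}=\{\,f\in S_j:\partial^\beta f=0\text{ for all }\beta\text{ with }\beta_1=0,\ |\beta|=\alpha_i+1\,\}$. For $f=x_1^{\,j-\alpha_i}h$ with $h\in S_{\alpha_i}$ each such $\partial^\beta$ differentiates only $x_2,\ldots,x_r$, giving $x_1^{\,j-\alpha_i}\partial^\beta h=0$ since $\deg h=\alpha_i<|\beta|$; this is $\supseteq$. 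Conversely, vanishing of all order-$(\alpha_i+1)$ partials in $x_2,\ldots,x_r$ forces the degree of $f$ in those variables to be at most $\alpha_i$ (recovering each coefficient of an order-$k$ monomial by an order-$k$ partial, which is nonzero in characteristic zero), so every monomial of $f$ carries $x_1$ to a power $\ge j-\alpha_i$; hence $x_1^{\,j-\alpha_i}\mid f$ and $f\in x_1^{\,j-\alpha_i}S_{\alpha_i}$.

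I expect the converse inclusion of this single-point computation to be the crux, since it is exactly where the hypothesis $j\ge\alpha_i+1$ and the characteristic-zero assumption are genuinely used; once the perfectness of the apolarity pairing and the reduction $(F^{-1})_j=(F_j)^\perp$ are established, all the remaining steps are formal.
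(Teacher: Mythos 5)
Your proposal is correct, and there is nothing in the paper to compare it against in detail: the paper states Theorem \ref{invDim} as a quoted result of Emsalem--Iarrobino \cite{ei} with no proof of its own, and what you have written is a sound, complete reconstruction of the standard apolarity argument underlying that citation --- perfectness of the differentiation pairing $R_j\times S_j\to\KK$ in characteristic zero, the identification $(F^{-1})_j=(F_j)^{\perp}$ (with the correct use of the ideal property and nondegeneracy in degree $j-d$ to pass from orthogonality to annihilation), the orthogonal-complement rule $\bigl(\bigcap_i W_i\bigr)^{\perp}=\sum_i W_i^{\perp}$, and the single-point computation at $[1:0:\cdots:0]$ after an equivariant change of coordinates. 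Your two pressure points are also correctly identified: the hypothesis $j\geq \alpha_i+1$ and ${\rm char}(\KK)=0$ enter exactly in the converse inclusion of the single-point identity, so no repair is needed.
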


The following corollary is just a special case version of Theorem 
\ref{invDim},
but one that we will use repeatedly.

\begin{cor}\label{invDimCor}
Let $t\ge1$ be an integer, let
$$J = \wp_1^{j-t+1} \cap \cdots
\cap \wp_n^{j-t+1}\subset R$$
be an ideal of fatpoints
and consider the ideal $I= \langle L_{p_1}^t, \ldots, L_{p_n}^t \rangle\subset S$.
Then
$$\dim_{\KK}I_j=\begin{cases}
\dim_{\KK}(R/J)_j & \mbox{for }j\ge t\cr
0 & \mbox{for } 0\le j < t\cr
\end{cases}$$
and hence
$$\dim_{\KK}(S/I)_j=\begin{cases}
\dim_{\KK}J_j & \mbox{for }j\ge t\cr
\binom{r-1+j}{r-1} & \mbox{for } 0\le j < t\cr
\end{cases}$$
\end{cor}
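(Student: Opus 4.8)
The plan is to deduce the corollary directly from Theorem~\ref{invDim} by a $j$-dependent choice of the exponents $\alpha_i$, treating the two degree ranges separately. The entire content is an exponent match: the fat point ideal $J$ in the corollary is not fixed but varies with $j$ through the exponent $j-t+1$, so for each degree $j$ I would feed the theorem exponents that also depend on $j$.

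First, for $0\le j<t$, no inverse-system input is needed. The ideal $I=\langle L_{p_1}^t,\ldots,L_{p_n}^t\rangle$ is generated in degree $t$, so $I_j=0$ whenever $j<t$, giving $\dim_\KK I_j=0$. Consequently $(S/I)_j=S_j$, whence $\dim_\KK(S/I)_j=\dim_\KK S_j=\binom{r-1+j}{r-1}$, which is the stated value.

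For $j\ge t$, I would fix $j$ and apply Theorem~\ref{invDim} with the uniform choice $\alpha_i=j-t$ for every $i$. Then $\alpha_i+1=j-t+1$, so the ideal $F=\wp_1^{\alpha_1+1}\cap\cdots\cap\wp_n^{\alpha_n+1}$ is exactly the fat point ideal $J$ of the corollary. Since $t\ge1$ we have $\max\{\alpha_i+1\}=j-t+1\le j$, so degree $j$ lies in the second branch of the theorem, yielding
$$(J^{-1})_j=L_{p_1}^{j-\alpha_1}S_{\alpha_1}+\cdots+L_{p_n}^{j-\alpha_n}S_{\alpha_n}=L_{p_1}^tS_{j-t}+\cdots+L_{p_n}^tS_{j-t}.$$
The right-hand side is precisely $I_j$, the degree-$j$ piece of the ideal generated by the $L_{p_i}^t$, since all generators have the common degree $t\le j$. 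The dimension clause of the theorem then gives $\dim_\KK I_j=\dim_\KK(J^{-1})_j=\dim_\KK(R/J)_j$, which is the first case.

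Finally I would read off $\dim_\KK(S/I)_j$ by a dimension count. Both $S$ and $R$ are polynomial rings in $r$ variables, so $\dim_\KK S_j=\dim_\KK R_j=\binom{r-1+j}{r-1}$. For $j\ge t$ this gives $\dim_\KK(S/I)_j=\dim_\KK S_j-\dim_\KK I_j=\dim_\KK R_j-\dim_\KK(R/J)_j=\dim_\KK J_j$, as claimed. The only point requiring care—and the closest thing to an obstacle—is checking that the $j$-varying choice $\alpha_i=j-t$ keeps degree $j$ inside the valid branch of Theorem~\ref{invDim}; this is exactly where the hypothesis $t\ge1$ enters.
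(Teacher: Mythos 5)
Your proof is correct and is precisely the argument the paper intends: the paper offers no separate proof, remarking only that the corollary is ``just a special case version'' of Theorem~\ref{invDim}, and your uniform specialization $\alpha_i=j-t$ (with the check that $t\ge1$ places degree $j$ in the branch $j\ge\max\{\alpha_i+1\}=j-t+1$, so $(J^{-1})_j=L_{p_1}^tS_{j-t}+\cdots+L_{p_n}^tS_{j-t}=I_j$) is exactly the intended instantiation. The passage to $\dim_{\KK}(S/I)_j=\dim_{\KK}J_j$ via $\dim_{\KK}S_j=\dim_{\KK}R_j$ and the trivial case $j<t$ are likewise the expected bookkeeping, so nothing is missing.
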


Note that to obtain the Hilbert function of a fixed ideal of linear forms, 
it
is necessary to consider an infinite family of ideals of fat points.

\begin{exm}\label{exceptionalEx}\rm Here we apply
Corollary \ref{invDimCor} to obtain the Hilbert function for
$A$ from Example~\ref{nonWLPlinforms}:

\begin{center}
\begin{supertabular}{|c|c|c|c|c|c|c|c|c|}
\hline $j$ & 0 & 1 & 2 & 3 & 4 & 5 & 6 & $\ldots$ \\
\hline $\dim_{\KK}A_j$ &
1 &  4 &  10 &  15 &  15 &  6 &  0 & $\ldots$ \\
\hline $HF(\cap_{i=1}^5 \wp_{i_{_{\vbox 
to4pt{\vfil}}}}^{j-2},j){}^{{}^{\vbox to4pt{\vfil}}}$ &
0 &  0 &  0 &  15 &  15 &  6 &  0 & $\ldots$ \\
\hline
\end{supertabular}
\end{center}
We consider the restriction of this example to $\PP^2$ in Example 
\ref{exP2}.
\end{exm}

\subsection{Blowups of points in projective space} There is a well-known 
correspondence between the
graded pieces of an ideal of fat points $F \subseteq \KK[x_1,\ldots,x_r]$ 
and
the global sections of a line bundle on the variety $X$ which is the
blow up of $\mathbb{P}^{r-1}$ at the points. We briefly review this.
Let $E_i$ be the class of the exceptional divisor
over the point $p_i$, and $E_0$ the pullback of a hyperplane on 
$\mathbb{P}^{r-1}$.
Given non-negative integers $m_i$, consider the fatpoints ideal 
$J=\wp_1^{m_1} \cap \cdots
\cap \wp_n^{m_n}\subset R$ and
let $$D = jE_0-\sum\limits_{i=1}^n m_iE_i.$$
Of course, $\dim_{\KK} J_j=h^0(\PP^{r-1},{\mathcal I}_Z(j))$, where 
${\mathcal I}_Z(j)$ is the ideal sheaf
of the fatpoints subscheme $Z$ defined by $F$. Moreover,
by \cite[Proposition 4.1.1]{h0}, $h^i(X,D)=h^i(\PP^{r-1},{\mathcal 
I}_Z(j))$ for all $i\ge0$.
Taking cohomology of the exact sequence
\[
0 \longrightarrow {\mathcal I}_Z(j) \longrightarrow \OO_{\PP^{r-1}}(j) 
\longrightarrow \OO_{Z}(j) \longrightarrow 0
\]
and using the fact that $\OO_{Z}(j)\cong \OO_{Z}$ and thus $h^0(Z, 
\OO_{Z}(j))=h^0(Z, \OO_{Z})=
\sum_i\binom{r-2+m_i}{r-1}$,
shows that
\begin{equation}\label{RiemannRoch}
h^0(X,D)=h^0({\mathcal 
I}_Z(j))=\binom{r-1+j}{r-1}-\sum_i\binom{r-2+m_i}{r-1}+h^1({\mathcal 
I}_Z(j)).
\end{equation}
In the context of Corollary \ref{invDimCor}, taking $m_i=j-t+1$ for all 
$i$ and defining
$D_j$ to be $D_j=jE_0-(j-t+1)(E_1+\cdots+E_n)$, we thus have:
\begin{equation}\label{Fminus}
\dim_{\KK}I_j=\begin{cases}
n\binom{r+j-t-1}{r-1}-h^1({\mathcal I}_Z(j))=
n\binom{r+j-t-1}{r-1}-h^1(D_j) & \mbox{for }j\ge t\cr
0 & \mbox{for } 0\le j < t\cr
\end{cases}
\end{equation}
Alternatively, this can be stated for the quotient $S/I=A$ as:
\begin{equation}\label{alt}
\dim_{\KK}A_j=\begin{cases}
h^0(D_j) & \mbox{for }j\ge t\cr
\binom{r-1+j}{r-1} & \mbox{for } 0\le j < t\cr
\end{cases}
\end{equation}
We will say that $I$ has {\em expected dimension} in degree $j$ if
either $I_j=0$ or $h^1(D_j)=0$. We say $D_j$ is {\em irregular} if
$h^1(D_j)>0$ and {\em regular} otherwise. We say $D_j$ is {\em special} if
$h^0(D_j)$ and $h^1(D_j)$ are both positive.

\begin{exm}\label{exP2}\rm Let $A$ be the quotient of $\KK[x_1,x_2,x_3]$ 
by the cubes of five general
linear forms. The corresponding five points in $\PP^2$ are general, and 
the first
interesting computation involves $D_4 = 4E_0-\sum\limits_{i=1}^5 2E_i$, 
for which we have
$$\dim_{\KK}A_4= h^0(D_4) = \binom{6}{2} -15 + h^1(D_4).$$
Since $H^0(D_4)$ contains the double of a conic through the five points,
$D_4$ is special, and in fact we have $h^0(D_4) =1=h^1(D_4)$.
\end{exm}

\subsection{WLP and the syzygy bundle}\label{WLPSB}
In \cite{HMNW}, Harima-Migliore-Nagel-Watanabe study WLP using the syzygy 
bundle:
\begin{defn}
If $I = \langle f_1,\ldots, f_n \rangle$ is $\langle x_1,\ldots,x_r 
\rangle-$primary, and $deg(f_i) = d_i$,
then the syzygy bundle ${\mathcal S}(I) = \widetilde{Syz(I)}$ is a rank 
$n-1$ bundle defined via
\begin{equation}\label{Defininges}
0 \longrightarrow Syz(I) \longrightarrow \bigoplus\limits_{i=1}^{n}S(-d_i) 
\stackrel{[f_1,\ldots,f_n]}{\longrightarrow} S \longrightarrow S/I 
\longrightarrow 0.
\end{equation}
or, equivalently, by
\begin{equation}\label{Definingses}
0 \longrightarrow Syz(I) \longrightarrow \bigoplus\limits_{i=1}^n S(-d_i) 
\longrightarrow I \longrightarrow 0
\end{equation}
\end{defn}

Let $\ell$ be a generic form in $S_1$ with $L = V(\ell)$, and $I$ an ideal
such that $A=S/I$ is Artinian. Sheafifying Equation \eqref{Defininges} and 
twisting gives
\[
0 \longrightarrow {\mathcal S}(I)(m) \longrightarrow 
\bigoplus\limits_{i=1}^{n}\OO_{\PP^{r-1}}(m-d_i) \longrightarrow 
\OO_{\PP^{r-1}}(m) \longrightarrow 0.
\]
Taking cohomology shows that
\begin{equation}\label{A=H1}
A = \bigoplus_{m \in \Z} H^1({\mathcal S}(I)(m)),
\end{equation}
since $A$ and
$\bigoplus_{m \in \Z} H^1({\mathcal S}(I)(m))$ both are direct sums of 
cokernels of the same maps on global sections.
Similarly,
\begin{equation}\label{SyzSections}
Syz(I)\simeq \bigoplus\limits_t H^0({\mathcal S}(I)(t)),
\end{equation}
since $Syz(I)$ and $\bigoplus\limits_t H^0({\mathcal S}(I)(t))$ both are 
direct sums of kernels of the same
maps on global sections. From Equation \eqref{Definingses} we also see 
that
\begin{equation}\label{ExpDim}
\dim_{\KK} I_j = \sum_i \binom{j-d_i+r-1}{r-1} - \dim_{\KK}Syz(I)_j.
\end{equation}
In case $f_i=L_{P_i}^t$ for a set of distinct points $P_i$, setting 
$D_j=jE_0-(j-t+1)(E_1+\cdots+E_n)$ and
comparing with Equation \eqref{Fminus}
shows that
\begin{equation}\label{ExpDimCor}
h^0({\mathcal S}(I)(j))=\dim_{\KK}Syz(I)_j=h^1(D_j)
\end{equation}
for $j\ge t$.

Since ${\mathcal S}(I)$ is a bundle, tensoring the
sequence
\[
0 \longrightarrow \OO_{\PP^{r-1}}(m) \longrightarrow \OO_{\PP^{r-1}}(m+1) 
\longrightarrow \OO_L(m+1) \longrightarrow 0
\]
with ${\mathcal S}(I)$ gives the exact sequence
\[
0 \longrightarrow {\mathcal S}(I)(m) \longrightarrow {\mathcal S}(I)(m+1) 
\longrightarrow {\mathcal S}(I)|_L(m+1) \longrightarrow 0.
\]
The long exact sequence in cohomology yields a sequence
\begin{equation}\label{les}
\xymatrixrowsep{15pt}
\xymatrixcolsep{25pt}
\xymatrix{
0 \ar[r] & H^0({\mathcal S}(I)(m)) \ar[r] &H^0({\mathcal S}(I)(m+1)) 
\ar[r]^{\phi_m} &H^0({\mathcal S}(I)|_L(m+1)) \ar[dll]\\
          & H^1({\mathcal S}(I)(m)) \ar[r]_{\mu_\ell} &H^1({\mathcal 
S}(I)(m+1)) \ar[r] &H^1({\mathcal S}(I)|_L(m+1))  \ar[dll]_{\psi_m}\\
          & H^2({\mathcal S}(I)(m)) \ar[r] &H^2({\mathcal S}(I)(m+1)) 
\ar[r] &\cdots.
}
\end{equation}
Surjectivity of $\mu_{\ell}$ in degree $m$ follows from injectivity of 
$\psi_m$, and
injectivity of $\mu_{\ell}$ from surjectivity of $\phi_m$.
In particular, $\mu_{\ell}$ is injective in degree $m$ if $h^0({\mathcal 
S}(I)|_L(m+1))=0$.

\begin{rmk}\label{sect2Rem}\rm
In the situation that $f_1,\ldots,f_n$ are $t^{th}$ powers of linear forms 
$L_{P_i}$,
we can understand ${\mathcal S}(I)|_L$ recursively. Without loss of 
generality, we may assume
$\ell=x_r$. Quotienting by the ideal $(\ell)\subset S$ gives an image 
ideal
$I'=I\otimes S'\subset S'=S/(\ell)$ that is itself generated by $t^{th}$ 
powers of linear forms (distinct since
$\ell$ is generic), these being the images under the quotient of the 
generators of $I$.
We let $A'$ denote $S'/I'$.
If $D_j=jE_0-(j-t+1)(E_1+\cdots+E_n)$ is the divisor on the blow up of
$\PP^{r-1}$ for the inverse system associated to
$I_j$, we will denote by $D'_j=jE'_0-(j-t+1)(E'_1+\cdots+E'_n)$ the 
divisor
on the blow up of $\PP^{r-2}$ for the inverse system associated to $I'_j$.
We also have
$Syz(I')=Syz(I)\otimes S'$ and thus ${\mathcal S}(I')={\mathcal 
S}(I)|_L={\mathcal S}(I)\otimes S'$.
Indeed, tensoring Equation \eqref{Definingses} by $S'$
yields the sequence
\begin{equation}\label{resSequence}
0 \longrightarrow Tor^S_1(I,S')\longrightarrow  Syz(I)\otimes S' 
\longrightarrow \bigoplus\limits_{i=1}^n S'(-t) \longrightarrow I\otimes 
S' \longrightarrow 0.
\end{equation}
But $Tor^S_1(I,S') = 0$ since it is the kernel of the injective map $I 
\stackrel{\mu_\ell}{\longrightarrow} I(1)$, so
\begin{equation}\label{Definingses2}
0 \longrightarrow Syz(I') \longrightarrow \bigoplus\limits_{i=1}^n S'(-d_i) 
\longrightarrow I' \longrightarrow 0
\end{equation}
is exact, analogous to Equation \eqref{Definingses}. Thus we also have
\begin{equation}\label{A=H1'}
A' = \bigoplus_{m \in \Z} H^1({\mathcal S}(I')(m))= \bigoplus_{m \in \Z} 
H^1({\mathcal S}(I)|_L(m)),
\end{equation}
\begin{equation}\label{SyzSections2}
Syz(I')=Syz(I)\otimes_S S'\simeq \bigoplus\limits_t H^0({\mathcal 
S}(I')(t)),
\end{equation}
\begin{equation}\label{ExpDim2}
\dim_{\KK} I'_j = \sum_i \binom{j-d_i+r-2}{r-2} - \dim_{\KK}Syz(I')_j,
\end{equation}
and, for $j\geq t$,
\begin{equation}\label{ExpDimCor2}
h^0({\mathcal S}(I')(j))=\dim_{\KK}Syz(I')_j=h^1(D'_j).
\end{equation}
Thus $\mu_\ell$ is injective in degree $m$
if $m+1\ge t$ and $h^1(D'_{m+1})=0$, since 
by Equation \eqref{Fminus} applied to $I'_{m+1}$ and $D'_{m+1}$ for 
$\PP^{r-2}$ we have $h^0({\mathcal S}(I)|_L(m+1))=h^1(D'_{m+1})$.
\end{rmk} 

\section{The Alexander-Hirschowitz theorem and generic 
forms}\label{sec:three}
A landmark result on the dimension of linear systems is:

\renewcommand{\thethm}{\thesection.\arabic{thm}}
\setcounter{thm}{0}

\begin{thm}[Alexander--Hirschowitz \cite{ah}]\label{AHthm}
Fix $m,r-1 \ge 2$, and consider the linear system of hypersurfaces of 
degree $m$ in $\PP^{r-1}$
passing through $n$ general points with multiplicity two. Then
\begin{enumerate}
\item For $m=2$, the system is special iff $2 \le n \le r-1$.
\item For $m$ greater than two, the only special systems are $(r-1,m,n) 
\in \{(2,4,5),(3,4,9),(4,4,14),(4,3,7)\}$.
In each of these four cases, the linear system is expected to be empty 
but in fact has projective dimension 0.
\end{enumerate}
\end{thm}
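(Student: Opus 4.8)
The plan is to translate the statement into a question about secant varieties via Terracini's lemma, and then to prove generic non-speciality by a simultaneous induction on the dimension $r-1$ and the degree $m$, using the Horace method of specialization to a hyperplane together with its differential refinement.

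First I would set up the dictionary. Writing $Z$ for the union of $n$ general double points in $\PP^{r-1}$, the scheme $Z$ has length $nr$, so the expected projective dimension of the system is $\binom{m+r-1}{r-1}-1-nr$, and speciality means $h^1({\mathcal I}_Z(m))>0$ while $h^0({\mathcal I}_Z(m))>0$. By Terracini's lemma this speciality is equivalent to the defectivity of the $(n-1)$st secant variety of the $m$-uple Veronese embedding of $\PP^{r-1}$, so the theorem amounts to a complete classification of defective Veronese secants. The quadratic case $m=2$ can be disposed of directly: a quadric corresponds to a symmetric $r\times r$ matrix, the relevant secant variety is the determinantal locus of symmetric matrices of bounded rank, and its classically known dimension yields defectivity precisely for $2\le n\le r-1$. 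This settles part (1) and provides the base of the induction on $m$.

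For part (2) the engine is the \emph{m\'ethode d'Horace}. Fixing a hyperplane $H\cong\PP^{r-2}$, I would specialize some of the $n$ double points to lie on $H$ and exploit the trace--residue exact sequence
\[
0 \longrightarrow {\mathcal I}_{\mathrm{Res}_H Z}(m-1) \longrightarrow {\mathcal I}_Z(m) \longrightarrow {\mathcal I}_{\mathrm{Tr}_H Z,\,H}(m) \longrightarrow 0,
\]
in which a double point placed on $H$ contributes a double point to the trace $\mathrm{Tr}_H Z$ inside $H$ and a simple reduced point to the residue $\mathrm{Res}_H Z$ in $\PP^{r-1}$. Vanishing of the relevant cohomology for the two outer terms is governed by the inductive hypothesis in dimension $r-2$ and degree $m-1$, and forces the desired vanishing for $Z$; since imposing independent conditions is an open condition, it suffices to exhibit a single specialized configuration for which this holds. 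The difficulty is that the naive count rarely balances: the number of double points one must throw onto $H$ to absorb the drop in $h^0$ need not be an integer. This is exactly where the \emph{differential} Horace method enters --- instead of fully specializing a fat point onto $H$ one degenerates it to a curvilinear scheme concentrated on $H$, splitting its $r$ conditions so that a controlled number land on the trace and the remainder on the residue. Managing these limit schemes, verifying that they stay sufficiently general under the flat degeneration, and ensuring that the induction does not leak speciality across the boundary of the four exceptional triples is the heart of the argument and the main obstacle.

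Finally I would isolate and verify the four exceptional cases by hand, as terminal nodes of the induction. Each corresponds to a classical special system expected to be empty but containing a unique doubled hypersurface: the doubled conic through five general points of $\PP^2$ gives $(2,4,5)$, the analogous doubled-quadric constructions give $(3,4,9)$ and $(4,4,14)$, and the secant defect of the cubic Veronese threefold gives $(4,3,7)$; in every case the projective dimension is $0$. Checking these directly and excluding them from the inductive step then leaves non-speciality established for every remaining triple $(r-1,m,n)$ with $m>2$.
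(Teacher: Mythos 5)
The paper does not prove this statement: it is imported verbatim from Alexander--Hirschowitz \cite{ah} as a known landmark result, so there is no in-paper argument to compare against. Your outline is a faithful reconstruction of the strategy actually used in the literature (Terracini's lemma to reduce to defectivity of secants of Veronese varieties, the $m=2$ case via symmetric determinantal loci, the Horace trace--residue sequence with the correct bookkeeping that a double point on $H$ contributes a degree-$(r-1)$ double point to the trace and a simple point to the residue, differential Horace for the non-integral counts, and hand-verification of the four exceptions). But as a proof it has a genuine gap, which you yourself flag: the differential Horace step --- degenerating a double point to a curvilinear limit scheme split between trace and residue, proving that the specialized cohomology vanishing is achieved by some flat limit, and propagating genericity through the degeneration --- is the entire content of the theorem, and it is described rather than executed. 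Naming it ``the heart of the argument and the main obstacle'' is accurate, but it means the proposal is a roadmap, not a proof; the original proof of this step runs to multiple papers, and even the later simplifications (Chandler, Brambilla--Ottaviani, Postinghel) require substantial technical machinery to control the limit schemes.

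There is also a concrete structural flaw in the induction as you set it up. You propose $m=2$ as the base of the induction on degree, but part (1) says precisely that quadrics are special for all $2\le n\le r-1$, so the inductive step from $m=3$ down to $m=2$ constantly lands in special systems and the naive descent fails: speciality would ``leak'' upward exactly where you worry it might. Historically this is why the cubic case $m=3$ (where the exception $(4,3,7)$ lives) had to be settled by a separate and quite delicate argument before the induction for $m\ge 4$ could be anchored, and the quartic cases $(2,4,5)$, $(3,4,9)$, $(4,4,14)$ likewise demand ad hoc treatment as genuinely special systems rather than as terminal nodes that the induction merely ``excludes.'' A correct write-up must make $m=3$ and $m=4$ independent base cases proved by other means, not consequences of descent to $m=2$. (One cosmetic slip: the Veronese $v_3(\PP^4)$ relevant to $(4,3,7)$ is a fourfold, not a threefold, and the unique special cubic there is the secant variety of the rational normal quartic through the seven points --- not a doubled hypersurface, unlike the other three exceptions.)
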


As a consequence of Theorem \ref{AHthm} and the developments from section 
\ref{sec:two}, we have
fairly complete information on WLP for quotients $A$ by ideals of powers 
of $n$ generic linear forms
when $n$ is not too small; specifically, we have:

\begin{prop}\label{AHcor}
Given generic linear forms $l_i$ such that $I=\langle l_1^t, \ldots, l_n^t\rangle$ and $A = 
{\KK}[x_1,\ldots,x_r]/I=S/I$ is Artinian,
consider the maps $\mu_{\ell}: A_j \rightarrow A_{j+1}$ where $\ell =x_r$ 
and $L$ is the
hyperplane defined by $\ell$.
\begin{itemize}
\item[(a)] For $0\le j\le t-2$, $\mu_{\ell}: A_j \rightarrow A_{j+1}$ is 
injective but not surjective.
\item[(b)] If $n \geq \binom{r-2 +t}{r-2}$, then $\mu_{\ell}: A_j 
\rightarrow A_{j+1}$ is surjective for $j\geq t-1$.
\item[(c)] The map $\mu_{\ell}: A_t \rightarrow A_{t+1}$
is injective if and only if $(r,t,n) \not\in 
\{(4,3,5),(5,3,9),(6,3,14),(6,2,7)\}$.
\item[(d)] If $n > \binom{r-2 +t}{r-2}$,
$A_{t-1}\stackrel{\mu_\ell}\rightarrow A_t$ is not injective, while if
$n \geq \binom{r-2 +t}{r-2}$, $A_t\stackrel{\mu_\ell}\rightarrow A_{t+1}$ 
is an isomorphism.
\end{itemize}
\end{prop}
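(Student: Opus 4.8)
The plan is to translate each assertion about the multiplication maps $\mu_\ell\colon A_j\to A_{j+1}$ into a statement about the cohomology of the divisors $D_j$ and $D'_{j+1}$ on the blowups of $\PP^{r-1}$ and $\PP^{r-2}$, and then invoke Alexander--Hirschowitz (Theorem~\ref{AHthm}) applied to the \emph{double points} ($\alpha_i=1$) that arise precisely in the critical degree. The key bridges are Corollary~\ref{invDimCor}, Equation~\eqref{alt}, and the criterion from Remark~\ref{sect2Rem}: $\mu_\ell$ is injective in degree $m$ whenever $m+1\ge t$ and $h^1(D'_{m+1})=0$, since $h^0(\mathcal{S}(I)|_L(m+1))=h^1(D'_{m+1})$.

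For part (a), when $0\le j\le t-2$ the ideal $I$ is zero in degrees $\le t-1$ (Corollary~\ref{invDimCor}), so $A_j=S_j$ and $A_{j+1}=S_{j+1}$ and $\mu_\ell$ is just multiplication by a linear form on polynomials, which is injective; it is not surjective because $\dim S_j<\dim S_{j+1}$. For part (b), I would argue surjectivity via the recursive structure: the cokernel of $\mu_\ell$ in degree $j$ is governed by $A'_{j+1}$, and once $n\ge\binom{r-2+t}{r-2}=\dim S'_{t-1}$ the $n$ forms $l_i^t$ restrict to $S'$ as enough generic $t$-th powers to make $A'$ vanish in degrees $\ge t-1$ (the $n$ restricted powers span all of $S'_j$ for $j\ge t-1$ by a genericity/dimension count), forcing $h^1(D'_{j+1})=0$ and hence surjectivity of $\mu_\ell$ for $j\ge t-1$. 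Part (d) then follows by combining (b) and (c): when $n>\binom{r-2+t}{r-2}$ the map $A_{t-1}\to A_t$ cannot be injective because $A'_t=0$ but the restriction forces a nonzero kernel (a dimension count using \eqref{alt} shows $\dim A_{t-1}>\dim A_t$ in this range), while $n\ge\binom{r-2+t}{r-2}$ gives surjectivity of $A_t\to A_{t+1}$ from (b), and injectivity from (c), yielding an isomorphism.

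Part (c) is the crux and the main obstacle. Here I would set $m=t$, so the relevant divisor is $D_t=tE_0-(E_1+\cdots+E_n)$ on the blowup of $\PP^{r-1}$, corresponding to degree-$t$ hypersurfaces through $n$ general \emph{double} points (the $\alpha_i=1$ case, exactly Terracini's setup). The map $\mu_\ell\colon A_t\to A_{t+1}$ is injective iff the transition in the long exact sequence \eqref{les} forces it, which by Remark~\ref{sect2Rem} reduces to the regularity/speciality of the linear system of degree-$t$ hypersurfaces through $n$ double points in $\PP^{r-1}$. By Theorem~\ref{AHthm}, this system is special exactly in the cases $(r-1,t,n)\in\{(2,4,5),(3,4,9),(4,4,14),(4,3,7)\}$, together with the quadric cases $t=2$. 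The delicate point is matching the indexing: the inverse-system dictionary sends degree $t$ in $S$ to degree-$t$ forms through double points, but the Hilbert function bookkeeping shifts the relevant AH-triple, producing the list $\{(4,3,5),(5,3,9),(6,3,14),(6,2,7)\}$ rather than the raw AH list. I would verify this correspondence case by case, checking that failure of full rank for $A_t\to A_{t+1}$ occurs precisely when $D_t$ is special (so both $h^0(D_t)$ and $h^1(D_t)$ are positive, preventing the generic-rank conclusion), and that in every non-special case full rank holds because either injectivity (from $h^1(D'_{t+1})=0$) or surjectivity (from the dimension count) is guaranteed.
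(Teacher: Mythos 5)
Your parts (a) and (d) essentially reproduce the paper's argument (and in (d) your dimension count is even a bit more elementary: $\dim A_{t-1}=\binom{r+t-2}{r-1}>\binom{r+t-1}{r-1}-n\geq\dim A_t$ by Pascal's rule when $n>\binom{r-2+t}{r-2}$, whereas the paper deduces a nonzero kernel from $h^0({\mathcal S}(I)|_L(t))=n-\binom{r-2+t}{r-2}>0$ and $h^0({\mathcal S}(I)(t))=0$; for the isomorphism claim you should also note that $n\geq\binom{r-2+t}{r-2}$ automatically excludes the four exceptional triples, so that (c) applies). But part (c), which you correctly identify as the crux, contains a genuine wrong turn: you point at the wrong linear system. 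The divisor you write down, $D_t=tE_0-(E_1+\cdots+E_n)$, has multiplicity-\emph{one} base points (at $j=t$ the multiplicity $j-t+1$ equals $1$), not double points, and generic simple points always impose independent conditions, so $D_t$ is never special and your criterion would detect nothing; moreover you locate the decisive system on $\PP^{r-1}$ in degree $t$. The correct control, via \eqref{les} and \eqref{ExpDimCor2}, is that the kernel of $A_t\to A_{t+1}$ is a quotient of $H^0({\mathcal S}(I)|_L(t+1))$, whose dimension is $h^1(D'_{t+1})$ for the \emph{restricted} divisor $D'_{t+1}=(t+1)E'_0-2(E'_1+\cdots+E'_n)$ on the blow-up of $\PP^{r-2}$: Alexander--Hirschowitz must be applied to degree-$(t+1)$ hypersurfaces through $n$ double points in $\PP^{r-2}$. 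Under $(r,t,n)\mapsto(r-2,t+1,n)$ the proposition's list becomes exactly the AH list $(2,4,5),(3,4,9),(4,4,14),(4,3,7)$ --- no delicate case-by-case matching is needed, whereas with your indexing $(4,3,5)$ would correspond to $(3,3,5)$, which is not on the AH list, and you would wrongly conclude injectivity in a genuinely exceptional case. In addition, to show injectivity actually \emph{fails} in the four exceptional cases you need a second application of AH upstairs: $h^0(D_{t+1})>0$ by \eqref{RiemannRoch}, so $D_{t+1}$ is nonspecial and $h^0({\mathcal S}(I)(t+1))=h^1(D_{t+1})=0$ by \eqref{ExpDimCor}; only this, combined with $h^0({\mathcal S}(I)(t))=0$ (generation in degree $t$), converts the positivity of $h^0({\mathcal S}(I)|_L(t+1))$ into a nonzero kernel of $\mu_\ell$. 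Your sketch never invokes this step.

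There is also a cohomological mix-up in your part (b): you conclude that the spanning of the restricted powers forces ``$h^1(D'_{j+1})=0$ and hence surjectivity,'' but $h^1(D'_{j+1})=0$ is the \emph{injectivity} criterion of Remark~\ref{sect2Rem}, and it is typically false in the setting of (b): when $n\geq\binom{r-2+t}{r-2}$ the restricted ideal has many syzygies and $h^0({\mathcal S}(I')(j+1))=h^1(D'_{j+1})$ is usually positive. What the spanning argument actually gives is $I'_{j+1}=S'_{j+1}$, i.e.\ $A'_{j+1}=0$ (equivalently $h^0(D'_{j+1})=0$), which by \eqref{A=H1'} kills $H^1({\mathcal S}(I)|_L(j+1))$ and hence yields surjectivity in \eqref{les}; note also that degree-$t$ powers cannot span $S'_{t-1}$, so $A'$ vanishes from degree $t$ on, which is precisely what $j\geq t-1$ requires. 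These two points --- the wrong target system in (c) and the $h^0$/$h^1$ conflation in (b) --- need to be repaired before the proposal constitutes a proof.
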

\begin{proof} (a) If $j\le t-2$, then $I_{j+1}=I_j=0$ and hence $A_j=S_j$
and $A_{j+1}=S_{j+1}$, but $S$ is a domain
with $\dim_{\KK}S_j<\dim_{\KK}S_{j+1}$.

(b) Let $S'=S/(\ell)$, $I'=I|_L$ and $A'=S'/I'$. Note that since ${\rm 
char}(\KK)=0$, the
locus of $t^{th}$ powers of all linear forms in $S'$ satisfies no non-trivial 
linear relation
(this would be false if ${\rm char}(\KK)>0$ and $t$ were a power of
the characteristic). Thus the span of the $t^{th}$ powers of $n$ generic
linear forms has maximal dimension; i.e., its dimension is the minimum of
$n$ and the dimension $\binom{r-2 +t}{r-2}$ of the space of all forms of 
degree $t$
in $r-1$ variables. Since $n \geq \binom{r-2 +t}{r-2}$, we see that
$I'_{j+1}=S'_{j+1}$ for $j=t-1$ (and hence for $j\ge t-1$), hence 
$A'_{j+1}=0$ for $j\geq t-1$.

Now by Equation \eqref{A=H1'}
we have $H^1({\mathcal S}(I)|_L(j+1))=A'_{j+1}=0$ for $j\geq t-1$,  so by
Equation \eqref{les}
the map $A_j\stackrel{\mu_\ell}\rightarrow A_{j+1}$ is surjective.

(c) We now consider injectivity of $A_{t}\stackrel{\mu_\ell}\rightarrow 
A_{t+1}$.
From the long exact sequence of Equation~\eqref{les}, we have
\[
\xymatrixrowsep{25pt}
\xymatrixcolsep{12pt}
\xymatrix{0 \ar[r] & H^0({\mathcal S}(I)(t)) \ar[r] &H^0({\mathcal 
S}(I)(t+1)) \ar[r]
&H^0({\mathcal S}(I)|_L(t+1)) \ar[r] & A_t \ar[r]^{\cdot \ell} & A_{t+1}}
\]
Since $I$ is generated in degree $t$, $h^0({\mathcal S}(I)(t)) =0$.
Whenever $h^0({\mathcal S}(I)(t+1)) < h^0({\mathcal S}(I')(t+1))$ we thus
see that $\mu_{\ell}$ fails to be injective. This is precisely what occurs
if $(r,t,n) \in \{(4,3,5),(5,3,9),(6,3,14),(6,2,7)\}$. For example,
let $(r,t,n)=(4,3,5)$ and consider the divisor 
$D_{t+1}=(t+1)E_0-2(E_1+\cdots+E_n)$
on $\PP^{r-1}$ and $D'_{t+1}=(t+1)E'_0-2(E'_1+\cdots+E'_n)$ on $\PP^{r-2}$.
By Equation \eqref{RiemannRoch} we know
$h^0(D_{t+1})\ge \binom{r-1+t+1}{r-1}-n\binom{r}{r-1}=15>0$,
so $h^1(D_{t+1})=0$ by Theorem \ref{AHthm} and 
$h^0({\mathcal S}(I)(t+1))=0$ by Equation 
\eqref{ExpDimCor}, but $h^0({\mathcal 
S}(I)|_L(t+1))=h^1(D'_{t+1})>0$
by Equation \eqref{ExpDimCor2} and Theorem \ref{AHthm}.
The cases $(5,3,9),(6,3,14),(6,2,7)$ work the same way.

Now assume $(r,t,n) \not\in \{(4,3,5),(5,3,9),(6,3,14),(6,2,7)\}$.
The map $A_t \rightarrow A_{t+1}$ will be injective by Equation~\ref{les} 
if
\[
h^0({\mathcal S}(I)|_L(t+1))=0.
\]
But $h^0({\mathcal S}(I)|_L(t+1))=h^1(D'_{t+1})$
by Equation \eqref{ExpDimCor2}. Since
the restrictions of generic linear forms to $L$ remain generic,
by Theorem \ref{AHthm} we have $h^1(D'_{t+1})=0$.

(d) Assume $n > \binom{r-2 +t}{r-2}$. As shown in (b), $I'_t=S'_t$, hence
$h^0(I'(t))=h^0(S'(t)) = \binom{r-2 +t}{r-2}$. Now
by Equation \eqref{Definingses2}, using Equation \eqref{ExpDim2},
\[
h^0({\mathcal S}(I)|_L(t))=h^0(S'(0)^n)-\dim_\KK I'_t=n-\binom{r-2 +t}{r-2} > 
0.
\]
But we noted in (c) that $h^0({\mathcal S}(I)(t)) =0$.
Thus by Equation \eqref{les}, $A_{t-1}\stackrel{\mu_\ell}\rightarrow A_t$ 
is not injective.

If however $n \geq \binom{r-2 +t}{r-2}$, applying the statement of parts 
(b, c) shows that
$A_t\stackrel{\mu_\ell}\rightarrow A_{t+1}$ is an isomorphism.

\end{proof}
As pointed out to us by Iarrobino, this proposition is related to a result
of Hochster-Laksov \cite{HL}. In the situation of the proposition with 
$n = \binom{r-2 +t}{r-2}$, WLP holds at ``twin peaks''.

\begin{cor}\label{WLPfail2}
For generic linear forms $l_i$ and 
$I=\langle l_1^t, \ldots, l_n^t\rangle$ with $A = {\KK}[x_1,\ldots,x_r]/I$ 
Artinian,
the map $A_t \rightarrow A_{t+1}$ has full rank if and only if
$(r,t,n) \not\in \{(4,3,5),(5,3,9),(6,3,14),(6,2,7)\}$.
\end{cor}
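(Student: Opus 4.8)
The plan is to compute $\dim_\KK\ker\mu_\ell$ and $\dim_\KK\coker\mu_\ell$ separately for $\mu_\ell\colon A_t\to A_{t+1}$ and to show both are positive exactly for the four listed triples, since a linear map has full rank iff it is injective or surjective. The cokernel is immediate: multiplication by $\ell=x_r$ has cokernel $(A/\ell A)_{t+1}=A'_{t+1}$, so Equation \eqref{alt} applied to $A'$ on $\PP^{r-2}$ gives $\dim_\KK\coker\mu_\ell=h^0(D'_{t+1})$, and $\mu_\ell$ is surjective iff $h^0(D'_{t+1})=0$. For the kernel I would use the strand of \eqref{les} at $m=t$: since $I$ is generated in degree $t$ we have $h^0({\mathcal S}(I)(t))=0$, so $\phi_t$ is injective and
\[
\dim_\KK\ker\mu_\ell=h^0({\mathcal S}(I)|_L(t+1))-h^0({\mathcal S}(I)(t+1))=h^1(D'_{t+1})-h^1(D_{t+1})
\]
by \eqref{ExpDimCor} and \eqref{ExpDimCor2}. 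Thus full rank fails iff $h^0(D'_{t+1})>0$ and $h^1(D'_{t+1})>h^1(D_{t+1})$.

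I would then read off these quantities from Theorem \ref{AHthm}, taking care that regularity governs $h^0$ but \emph{not} $h^1$: a regular system of $n$ generic double points has $h^0=\max(0,\chi)$ and hence $h^1=\max(0,-\chi)$, where $\chi$ is the expected affine dimension. Let $\chi,\chi'$ be these for $D_{t+1}$ in $\PP^{r-1}$ and $D'_{t+1}$ in $\PP^{r-2}$; a Pascal identity yields the linchpin relation $\chi-\chi'=\binom{r-1+t}{r-1}-n=\dim_\KK A_t\ge0$, so $\chi\ge\chi'$. By Theorem \ref{AHthm}(2), $D'_{t+1}$ is special exactly when $(r-2,t+1,n)$ is one of $(2,4,5),(3,4,9),(4,4,14),(4,3,7)$, i.e.\ exactly for $(r,t,n)\in\{(4,3,5),(5,3,9),(6,3,14),(6,2,7)\}$; there $h^0(D'_{t+1})=1$ and $h^1(D'_{t+1})>0$, while $\chi>0$ forces $D_{t+1}$ regular with $h^1(D_{t+1})=0$. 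Hence both kernel and cokernel are nonzero and full rank fails in precisely these four cases.

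For the remaining triples I would show full rank holds by splitting on the sign of $\chi'$. If $\chi'\le0$ the regular $D'_{t+1}$ has $h^0(D'_{t+1})=0$, so $\mu_\ell$ is surjective. If $\chi'>0$ then $\chi\ge\chi'>0$, so neither $D_{t+1}$ nor $D'_{t+1}$ can be Alexander--Hirschowitz special (each special system has $\chi\le0$); both are regular with $h^1=0$, so $\ker\mu_\ell=0$ and $\mu_\ell$ is injective. Either way $\mu_\ell$ has full rank, giving the equivalence.

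The main obstacle---and the reason full rank rather than injectivity is the clean statement---is the correct bookkeeping of $h^1$: for a regular system with $\chi<0$ one has $h^0=0$ but $h^1=-\chi>0$, so $\mu_\ell$ can be non-injective (kernel of size $h^1(D'_{t+1})-h^1(D_{t+1})$) while still being surjective, hence of full rank. Isolating the four sporadic triples therefore requires demanding $h^0(D'_{t+1})>0$ and $h^1(D'_{t+1})>h^1(D_{t+1})$ simultaneously, and the relation $\chi-\chi'=\dim_\KK A_t$ is exactly what prevents any fifth failure: as soon as $D'_{t+1}$ acquires a section ($\chi'>0$) the ambient divisor $D_{t+1}$ is already regular with vanishing $h^1$.
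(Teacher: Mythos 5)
Your proposal is correct, but it is organized genuinely differently from the paper's proof. The paper gets the non-exceptional direction for free by quoting Proposition \ref{AHcor}(c) (injectivity holds exactly off the four triples), and then handles the four exceptional triples one at a time: it computes each Hilbert series explicitly via Corollary \ref{invDimCor} (e.g. $(1,4,10,15,15,6)$ for $(4,3,5)$), identifies the kernel of $A_t\to A_{t+1}$ with $h^1(D'_{t+1})$, and deduces a positive cokernel by counting dimensions (the ``twin peaks'' $\dim A_t=\dim A_{t+1}$ in three cases, and $26\to 25$ with kernel $2$ in the fourth). You instead derive uniform formulas $\dim_\KK\ker\mu_\ell=h^1(D'_{t+1})-h^1(D_{t+1})$ and $\dim_\KK\coker\mu_\ell=h^0(D'_{t+1})$ --- the identification $\coker\mu_\ell=A'_{t+1}$ combined with \eqref{alt} is a clean shortcut the paper does not use --- and then run a single Euler-characteristic case split driven by the Pascal identity $\chi-\chi'=\binom{r-1+t}{r-1}-n$. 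Your version buys uniformity (no per-case Hilbert series) and exact kernel and cokernel dimensions in all cases; the paper's version is shorter given that Proposition \ref{AHcor}(c) is already in hand, and its explicit numbers double as worked examples.

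Two of your justifying glosses are overstated, though both are repairable in a line. First, $\chi-\chi'=\binom{r-1+t}{r-1}-n$ equals $\dim_\KK A_t$ (and is nonnegative) only when $n\le\binom{r-1+t}{r-1}$; for larger $n$ one has $\dim_\KK A_t=0$ while $\chi-\chi'<0$. You invoke $\chi\ge\chi'$ only in the branch $\chi'>0$, where it is in fact true, but this needs the observation that $\chi'>0$ forces $n(r-1)<\binom{r+t-1}{r-2}$, hence $n<\frac{1}{t+1}\binom{r+t-1}{r-1}\le\binom{r+t-1}{r-1}$. Second, your parenthetical ``each special system has $\chi\le0$'' holds for the four sporadic systems of Theorem \ref{AHthm}(2) but fails for the quadratic specials of Theorem \ref{AHthm}(1): for instance, two double points in $\PP^3$ impose $8$ conditions on the $10$-dimensional space of quadrics, so $\chi=2>0$, yet the system is special. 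These occur only for $m=t+1=2$, i.e.\ $t=1$, and are excluded here because Artinian-ness forces $n\ge r$, outside the special range $2\le n\le r-2$ (resp.\ $2\le n\le r-1$) for $D'_{t+1}$ (resp.\ $D_{t+1}$). With these two remarks inserted, your argument is complete and matches the paper's conclusions, including the values $h^0(D'_{t+1})=1$ and the kernel dimensions $1,2,1,1$ in the four exceptional cases.
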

\begin{proof}
By Proposition~\ref{AHcor}(c), it suffices to show in the four exceptional
cases that $\mu_{\ell}$ is not surjective.
\begin{enumerate}
\item For $I=\langle l_1^3,\ldots, l_5^3\rangle \subseteq 
\KK[x_1,\ldots,x_4]$,
the Hilbert series for $S/I$ is $(1, 4, 10, 15, 15, 6)$, as in Example 
\ref{exceptionalEx}.
But as in the proof of Proposition \ref{AHcor}(c), the kernel of
$A_3\to A_4$ has dimension $h^1(D'_{4})$, hence the cokernel has 
dimension $h^1(D'_{4})$,
so $\mu_{\ell}$ fails to have full rank, since $h^1(D'_4)=1$ by Theorem 
\ref{AHthm}.
\item Similarly, for $I=\langle l_1^3,\ldots, l_9^3\rangle \subseteq 
\KK[x_1,\ldots,x_5]$,
the Hilbert series for $S/I$ is $(1,5,15,26,25)$, and the kernel of
$A_3\to A_4$ has dimension $h^1(D'_4)=2$, so the cokernel has
dimension 1.
\item For $I=\langle l_1^3,\ldots, l_{14}^3\rangle \subseteq 
\KK[x_1,\ldots,x_6]$,
the Hilbert series for $S/I$ is $(1, 6, 21, 42, 42)$ but the kernel (and 
hence the cokernel)
of $A_3\to A_4$ has dimension $h^1(D'_4)=1$.
\item For $I=\langle l_1^2,\ldots, l_7^2\rangle \subseteq 
\KK[x_1,\ldots,x_6]$,
the Hilbert series for $S/I$ is $(1, 6, 14, 14, 5)$ but the kernel (and 
hence the cokernel)
of $A_2\to A_3$ has dimension $h^1(D'_3)=1$.
\end{enumerate}
Note that all but (2) are instances of failure of WLP at ``twin peaks''.
\end{proof}

\setcounter{thm}{0}

\section{Powers of linear forms in $\KK[x_1,x_2,x_3,x_4]$}\label{sec:four}
For powers of linear forms in $\KK[x_1,x_2,x_3]$, restriction to $\ell$ 
yields
powers of linear forms in two variables, and as shown in \cite{GS}, 
behaviour of these
ideals depends only on the degrees of the generators. This is in contrast 
to the case of
four variables, where restriction to $L=V(\ell)\simeq \PP^2$ yields powers 
of
linear forms in $\KK[x_1,x_2,x_3]$. In this section, we focus on 
powers of
linear forms in $S = \KK[x_1,\ldots,x_4]$ for which the Hilbert function 
of
the associated (restricted) fatpoint subscheme is known.

A famous open conjecture on the Hilbert function of fat points in $\PP^2$ 
is
expressed in terms of $(-1)$-curves (i.e., smooth rational curves $E$ with 
$E^2=-1$):

\begin{conj}[Segre-Harbourne-Gimigliano-Hirschowitz \cite{m1}]\label{SHGH}
Suppose that $\{p_1,\ldots,p_n\}\subseteq \PP^2$ is a collection of points 
in
general position, $X$ is the blowup of $\PP^2$ at the points, and $E_i$ 
the
exceptional divisor over $p_i$. If $F_j = jE_0-\sum_{i=1}^n a_iE_i$ is
special, then there exists a $(-1)$-curve $E$ with $E \cdot F_j \le -2$.
\end{conj}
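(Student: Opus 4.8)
The plan must begin with a caveat: the statement is the Segre--Harbourne--Gimigliano--Hirschowitz conjecture, which is open for $n\ge 10$ general points, so what follows is a description of the natural line of attack together with where it stalls rather than a route to a complete proof. First I would isolate the actual content. The converse implication is routine: if a $(-1)$-curve $E$ satisfies $E\cdot F_j\le -2$, then $E$ must occur in the fixed locus of $|F_j|$ with multiplicity at least $-(E\cdot F_j)\ge 2$, and peeling off this multiple curve leaves a residual system whose cohomology forces $h^1(F_j)>0$. Thus all the difficulty lies in the forward direction: I would try to show that whenever $h^0(F_j)>0$ and $h^1(F_j)>0$, the base locus of $|F_j|$ necessarily contains a $(-1)$-curve to multiplicity at least two. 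Equivalently, after writing $F_j=M+V$ with $V$ the fixed part, I would aim to prove that a special system always forces $V$ to contain a multiple $(-1)$-curve, i.e.\ that the \emph{only} obstruction to the expected dimension is such a curve.

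The principal tool is the \emph{Cremona action}. The Weyl group $W$ of the root lattice $K_X^{\perp}\subset\mathrm{Pic}(X)$ acts on divisor classes preserving $h^0$ and $h^1$ and permuting the $(-1)$-classes, so I would try to move $F_j$ into a distinguished chamber (a ``Cremona-reduced'' class, with the $a_i$ nonincreasing and $j\ge a_1+a_2+a_3$) and then verify the dichotomy for reduced classes, where either the expected dimension is attained or a short $(-1)$-curve is manifestly present. In the del Pezzo range $n\le 8$ this strategy genuinely closes: the anticanonical class is effective, $W$ is the finite $E_n$ root group, and a finite combinatorial analysis of the reduced classes pins down exactly when a multiple $(-1)$-curve must appear. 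The case $n=9$ (the rational elliptic surface, affine $E_8$) is borderline but still tractable because the anticanonical class remains effective and one can argue on the elliptic fibration.

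For large $n$ the only approach with any traction is a degeneration argument in the spirit of the Horace method of Alexander--Hirschowitz and the Ciliberto--Miranda degenerations: degenerate $\PP^2$ to a transverse union of two surfaces glued along a line, distribute the $n$ points between the components, and relate $h^0,h^1$ of $F_j$ to the limit systems via the exact sequence of the two components and their intersection, setting up an induction on $j$ and $n$ in which the inductive hypothesis is the conjecture itself on each component. The hard part---and precisely the reason the conjecture is open---is making this induction close. The residual systems on the degenerate surfaces are controlled by $(-1)$-curves \emph{there}, and tracking how these limit back to genuine $(-1)$-curves on $X$ is delicate; moreover there is no a priori bound on the multiplicity to which a $(-1)$-curve can sit in the base locus, so the combinatorics of which curve to split off need not terminate. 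For $n\ge 10$ the Weyl group is infinite, the anticanonical class is no longer effective, and neither the Cremona reduction nor the known degenerations are strong enough to exclude a hypothetical special system unexplained by any $(-1)$-curve. I would therefore expect to be able to settle the del Pezzo range $n\le 9$ by the Weyl-group method, while regarding the general statement as genuinely beyond current techniques.
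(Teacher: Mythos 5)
There is nothing in the paper to compare your attempt against: the statement is precisely the Segre--Harbourne--Gimigliano--Hirschowitz conjecture, stated as Conjecture~\ref{SHGH} and attributed to the survey \cite{m1}; it is open for $n\ge 10$ general points, and the paper neither proves it nor claims to. Your proposal correctly identifies this, and as a status report it is accurate and aligned with what the paper actually uses. The easy converse you sketch --- if a $(-1)$-curve $E$ has $E\cdot F\le -2$ and $F$ is effective, then $E$ splits off $|F|$ and $h^1(F)>0$ --- is exactly the argument appearing inside the paper's proof of Lemma~\ref{stronger}: there one restricts to $E\cong\PP^1$, where $\deg\OO_E(F)\le -2$ gives $h^1(E,F|_E)>0$, and kills $h^2(F-E)=h^0(K_X-(F-E))$ using effectivity of $-K_X$ (valid since $n\le 8$), so that the long exact sequence of $0\to\OO_X(F-E)\to\OO_X(F)\to\OO_E(F)\to 0$ forces $h^1(F)>0$. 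Likewise your claim that the del Pezzo range closes is what the paper relies on: Lemma~\ref{stronger} quotes \cite[Theorem 9]{N} for the conjecture at $n\le 8$ general points, and the paper cites Ciliberto--Miranda \cite{cm} for uniform multiplicity at most $12$ --- consistent with your assessment that Cremona/Weyl-group reduction settles small $n$ while degenerations yield only partial ranges for large $n$.

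One point you could sharpen, and which the paper makes explicitly: the hypothesis that $F_j$ be special (in particular effective) is essential to the $(-1)$-curve test. The paper's example of $22$ generic quintic powers produces $D'_5=5E'_0-E'_1-\cdots-E'_{22}$ with $h^1(D'_5)=1$ yet no $(-1)$-curve $E$ satisfying $E\cdot D'_5\le -2$; so for non-effective classes the test fails in general, and the substantive work the paper does \emph{around} the conjecture is Lemma~\ref{stronger}, showing that for uniform multiplicities and $1<n\le 8$, $n\ne 4$, the test nevertheless extends to non-effective classes via a case-by-case analysis. Since no proof of the conjecture exists, there is no gap to flag in your proposal beyond the openness you yourself identify; your account of the obstructions for $n\ge 10$ --- infinite Weyl group, non-effective anticanonical class, no a priori bound on the multiplicity of a $(-1)$-curve in the base locus --- is correct.
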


\begin{exm}\rm Let $C = 2(2E_0-\sum\limits_{i=1}^5 E_i)+(E_0-E_1-E_2)$.
Then $h^0(C)=1$ and $h^1(C)=1$, so $C$ is special, but 
$E=2E_0-\sum\limits_{i=1}^5 E_i$
is rational by adjunction with $E^2=-1$ and $E\cdot C=-2$.
\end{exm}

\begin{lem}\label{SHGHinj}
Suppose $l_i \in S_1$ are generic and 
$I=\langle l_1^t, \ldots, l_n^t\rangle$, with $A = S/I$ Artinian.
If Conjecture~\ref{SHGH} holds and the divisor $D'_m$
corresponding to the inverse system
of $(I\otimes S/\ell)_m$ is effective but $F\cdot E\ge -1$ for all
$(-1)$-curves $E$,
then the map $A_{m-1} \rightarrow A_{m}$ is injective.
\end{lem}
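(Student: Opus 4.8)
The plan is to reduce the claimed injectivity to a single cohomological vanishing, $h^1(D'_m)=0$, and then to extract that vanishing from Conjecture~\ref{SHGH}. (I read the divisor called $F$ in the statement as $D'_m$, matching the notation $F_j$ of the conjecture.) First I would invoke the machinery of Remark~\ref{sect2Rem}: the map $A_{m-1}\to A_m$ is $\mu_\ell$ in degree $m-1$, and by the closing sentence of that remark together with Equation~\eqref{ExpDimCor2}, it is injective as soon as $m\ge t$ and $h^1(D'_m)=0$. Here I am using that $\ell=x_r$ is generic, so that $I'=I\otimes S/(\ell)$ is again generated by $t$-th powers of generic (hence distinct) linear forms, and that the associated fat point scheme now lives on the blowup of $\PP^{r-2}=\PP^2$, which is precisely the setting of Conjecture~\ref{SHGH}. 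I would note that the substantive range is $m\ge t$, where $D'_m$ is a genuine fat point divisor (multiplicities $m-t+1\ge 1$); for $m\le t-1$ the map $A_{m-1}\to A_m$ is injective for trivial reasons (cf. Proposition~\ref{AHcor}(a)).

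The core step is to deduce $h^1(D'_m)=0$. By hypothesis $D'_m$ is effective, so $h^0(D'_m)>0$. Suppose, toward a contradiction, that $h^1(D'_m)>0$ as well; then $D'_m$ is \emph{special} in the sense defined after Equation~\eqref{alt}. Applying the contrapositive of Conjecture~\ref{SHGH} to the special divisor $F_m=D'_m$ on the blowup of $\PP^2$ at the $n$ general points $p_i$, there would have to exist a $(-1)$-curve $E$ with $E\cdot D'_m\le -2$. This directly contradicts the hypothesis that $D'_m\cdot E\ge -1$ for every $(-1)$-curve $E$. Hence $h^1(D'_m)=0$, and the reduction of the first paragraph yields injectivity of $A_{m-1}\to A_m$.

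I expect the only real obstacles to be bookkeeping rather than substance. The degree indexing must be handled with care: one must confirm that injectivity of $A_{m-1}\to A_m$ corresponds to the vanishing of $h^1$ of $D'_m$, and not of $D'_{m\pm 1}$, which the reindexing in Remark~\ref{sect2Rem} settles. I would also make explicit that the relevant cohomological quantities match up under restriction to $L$, namely $h^0(\mathcal{S}(I)|_L(m))=h^1(D'_m)$ from Equation~\eqref{ExpDimCor2}, and that Conjecture~\ref{SHGH} is genuinely available because the points remain in general position after restriction to the generic hyperplane $L$. Everything beyond this is simply the clean contrapositive of the conjecture applied to an effective divisor that meets every $(-1)$-curve nonnegatively.
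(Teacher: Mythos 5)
Your proposal is correct and follows essentially the same route as the paper's proof: reduce injectivity of $A_{m-1}\to A_m$ to $h^0(\mathcal{S}(I)|_L(m))=h^1(D'_m)=0$ via Equations \eqref{les} and \eqref{ExpDimCor2}, and obtain that vanishing by applying the contrapositive of Conjecture~\ref{SHGH} to the effective divisor $D'_m$ (correctly read as the $F$ of the statement). You merely make explicit what the paper leaves implicit, namely the contrapositive step and the trivial range $m\le t-1$, so there is nothing to correct.
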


\begin{proof}
By Equation \eqref{ExpDimCor2}, if $D'_m$ is nonspecial, then
$H^0({\mathcal S}(I)|_L(m))=0$. Since by Equation \eqref{les}
$H^0({\mathcal S}(I)|_L(m))$ maps onto the kernel of 
$A_{m-1}\longrightarrow A_m$,
the result follows.
\end{proof}

\begin{exm}\rm The failure of WLP for Example~\ref{nonWLPlinforms}
can be related to the occurrence of an SHGH curve $E$ which 
as we saw in the proof of Corollary \ref{WLPfail2} causes
$A_3\to A_4$ not to be injective, in this case 
$E=2E_0-E_1-\cdots-E_5$ (see Example~\ref{exP2} where we have $D'_4=2E$).
The hypothesis that $A$ is a quotient by
powers of generic linear forms is necessary for $A_3\to A_4$ to fail to be injective. 
For example, if instead $I = \langle x^3,y^3,z^3,w^3,(x+y)^3\rangle$, then 
$h^0({\mathcal S}(I)|_L(4)) =1 = h^0({\mathcal S}(I)(4))$, and 
$h^0({\mathcal S}(I)(3))=0$, so now $A_3\rightarrow A_4$ is injective.
On the other hand, injectivity can fail even when no SHGH curve occurs; 
for example, let
$I = \langle l_1^5,\ldots,l_{22}^5\rangle$ where the $l_i$ are
generic linear forms in 4 variables. Then $A_4\to A_5$ is  not injective 
by Equation \eqref{les},
since $D'_5=5E'_0-E'_1-\cdots-E'_{22}$ so $h^0({\mathcal S}(I)|_L(5)) 
=h^1(D'_5)=1$
and $h^0({\mathcal S}(I)(5)) =h^1(D_5)=0$ (because 22 general points 
impose
independent conditions on quintics on $\PP^3$ but not on $\PP^2$).
\end{exm}

The preceding example involving 22 generic linear forms
shows that the putative test $E \cdot F_j \le -2$ for irregularity for 
linear systems in Conjecture
\ref{SHGH} requires in general that $F_j$ be effective. When the number 
$n$ of general points
is at most 8 but not a square a stronger statement can be made; this is 
Lemma \ref{stronger}.
But first we find all $(-1)$-curves $E$ on $X$ when $n\le 8$.

\begin{lem}\label{possSHGH}
If $X\to\PP2$ is the blow up of distinct points $p_1,\ldots,p_8\in\PP^2$ and 
$E=dE_0-\sum_{i=1}^8 b_iE_i$ is the divisor of a $(-1)$-curve on $X$, 
then $d\leq 6$ and the $b_i$ are a permutation of one of 
the following:
$(-1,0,0,0,0,0,0,0)$ for $d=0$, $(0,0,0,0,0,0,1,1)$ for $d=1$, 
$(0,0,0,1,1,1,1,1)$ for
$d=2$, $(0,1,1,1,1,1,1,2)$ for $d=3$, $(1,1,1,1,1,2,2,2)$ for $d=4$, 
$(1,1,2,2,2,2,2,2)$
for $d=5$ and $(2,2,2,2,2,2,2,3)$ for $d=6$. Moreover, if the points $p_i$ are general,
each case does in fact give a
smooth rational curve $E$ with $E^2=-1$.
\end{lem}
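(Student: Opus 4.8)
The plan is to extract two numerical identities from the hypothesis that $E=dE_0-\sum_{i=1}^8 b_iE_i$ is an irreducible $(-1)$-curve, enumerate the integer solutions, and then prove existence for general points via Riemann--Roch. The self-intersection condition $E^2=-1$ reads $\sum_i b_i^2=d^2+1$, while adjunction (an irreducible $(-1)$-curve is smooth rational, so $2g-2=E^2+E\cdot K_X=-2$, with $K_X=-3E_0+\sum_i E_i$) gives $E\cdot K_X=-1$, i.e.\ $\sum_i b_i=3d-1$. Because $E$ is irreducible and $E_0$ is nef, $d=E\cdot E_0\ge 0$; and for $d\ge 1$ we have $E\neq E_i$, so $b_i=E\cdot E_i\ge 0$ since two distinct irreducible curves meet nonnegatively. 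The case $d=0$ is handled directly: $\sum b_i^2=1$ and $\sum b_i=-1$ force one $b_i=-1$ and the rest zero, i.e.\ $E=E_i$.

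Next I would bound $d$. Cauchy--Schwarz applied to the eight nonnegative integers $b_i$ gives $(3d-1)^2=(\sum_i b_i)^2\le 8\sum_i b_i^2=8(d^2+1)$, which simplifies to $(d-7)(d+1)\le 0$, hence $d\le 7$. To exclude $d=7$, where the identities demand $\sum_i b_i=20$ and $\sum_i b_i^2=50$, I would note that the sum of squares of eight nonnegative integers of fixed sum $20$ is minimized by the most balanced choice (four $3$'s and four $2$'s), giving $52>50$; so no solution exists and $d\le 6$. For each $d\in\{1,\dots,6\}$ the quantity $\sum_i b_i^2-(\sum_i b_i)^2/8$ is small, which confines the $b_i$ to $\{0,1,2,3\}$; solving the linear system for the multiplicities of each value (for instance, when $d=6$ the counts $a,b,c$ of $1$'s, $2$'s, $3$'s satisfy $a+b+c=8$, $a+2b+3c=17$, $a+4b+9c=37$, forcing $a=0$, $b=7$, $c=1$) yields a unique multiset in each case, matching the stated lists.

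For the ``moreover'' claim I would first show each candidate class is effective. Riemann--Roch on the rational surface $X$ gives $\chi(\OO_X(E))=1+\tfrac12 E\cdot(E-K_X)=1+\tfrac12(-1-(-1))=1$, and $h^2(E)=h^0(K_X-E)=0$ because $(K_X-E)\cdot E_0=-3-d<0$ while $E_0$ is nef; hence $h^0(E)\ge 1$. When the $p_i$ are general, $X$ is a del Pezzo surface of degree $K_X^2=1$, so $-K_X$ is ample. Writing an effective representative as $\sum_j m_jC_j$ with the $C_j$ irreducible and $m_j\ge 1$, ampleness forces $C_j\cdot(-K_X)\ge 1$, so $1=E\cdot(-K_X)=\sum_j m_j\,C_j\cdot(-K_X)\ge\sum_j m_j$ leaves exactly one reduced component; thus $E$ is irreducible, and adjunction gives arithmetic genus $0$, so $E$ is a smooth rational curve with $E^2=-1$.

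The main obstacle is the existence half. The combinatorics only certifies that the listed classes are the sole numerical candidates; turning each into an honest irreducible $(-1)$-curve is where the generality of the points is essential, and I expect the crux to be establishing ampleness of $-K_X$ (equivalently, that $X$ is a genuine degree-$1$ del Pezzo surface), which is what powers the component-counting step. Care is also needed in sharpening the Cauchy--Schwarz estimate from $d\le 7$ to $d\le 6$, since that single borderline value is precisely the case the inequality alone fails to eliminate.
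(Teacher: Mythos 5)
Your proof is correct, and the enumeration half coincides with the paper's: both extract the identities $\sum b_i^2=d^2+1$ (from $E^2=-1$) and $\sum b_i=3d-1$ (from adjunction), apply Cauchy--Schwarz to get $d\le 7$, and then kill $d=7$ separately --- the paper observes that $d=7$ forces equality in Cauchy--Schwarz, hence equal $b_i$, which is impossible since $20/8\notin\Z$, while you minimize $\sum b_i^2$ over integer vectors of sum $20$ (getting $52>50$); these are interchangeable. (One small slip: in your $d=6$ system you set the counts of $1$'s, $2$'s, $3$'s to satisfy $a+b+c=8$, implicitly excluding zero entries; adding the count of zeros as a fourth unknown still yields the unique solution $(2,\dots,2,3)$, so this is harmless, and your explicit treatment of $d=0$ and of $b_i\ge 0$ via irreducibility is actually more careful than the paper, which asserts $d\in[1,7]$ without comment.) Where you genuinely diverge is the existence half. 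The paper shows $h^0(E)>0$ by a naive parameter count and then gets irreducibility by reducing each class via Cremona transformations to $E_0-E_1-E_2$ or $E_1$, citing Nagata; adjunction then gives smooth rational. You instead get effectivity from Riemann--Roch ($\chi(\OO_X(E))=1$) plus Serre duality ($h^2=h^0(K_X-E)=0$ since $(K_X-E)\cdot E_0<0$ with $E_0$ nef), and irreducibility by component counting against the ample $-K_X$ on the degree-$1$ del Pezzo surface: $E\cdot(-K_X)=1$ forces a single reduced component. Your route is cleaner and more conceptual, but note that its engine --- ampleness of $-K_X$ for the blow-up of $8$ general points --- is itself a nontrivial classical fact (Demazure, or Manin's \emph{Cubic Forms}) whose standard proofs run through essentially the same Cremona/Weyl-group analysis the paper outsources to Nagata; so the two arguments trade one classical citation for another. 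A side benefit of the paper's Cremona reduction is that it applies verbatim in settings where $-K_X$ is only nef or big rather than ample, whereas your del Pezzo argument is specific to $n\le 8$ points in truly general position --- which suffices here.
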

\begin{proof}
It is easy to check that $E^2=-1$ in each of the cases listed in the 
statement of the lemma.
We also have $h^0(E)>0$ in each case since a naive dimension count shows 
the number of conditions imposed
by the points is always less than the dimension of the space of all forms 
of degree $d$.
Moreover, if the points are general, each divisor $dE_0-\sum_{i=1}^8 b_iE_i$ reduces by Cremona 
transformations to
either $E_0-E_1-E_2$ or $E_1$, and hence $E$ is always (linearly 
equivalent to)
a prime divisor (see \cite{N}). Adjunction now shows that $E$ is smooth 
and rational.

Now we show that the list is complete.
Since $E^2=-1$, $d^2=\sum_{i=1}^8 b_i^2 -1$, and $KE = \sum b_i -3d$,
adjunction implies $3d=\sum_{i=1}^8 b_i +1$.
By Cauchy-Schwartz,
\[
d^2\sum_{i=1}^8 b_i^2 -1\geq \frac{1}{8} (\sum_{i=1}^8 b_i)^2 -1 = 
\frac{1}{8}(3d-1)^2-1=\frac{9d^2-6d-7}{8}.
\]
Thus, $d^2-6d-7\leq 0$ so $d\in[1,7]$. However, $d=7$ forces equality of 
the $b_i$, and it is easy to see
there are no solutions. Hence $d\in\{1,2,3,4,5,6\}$, and a check shows 
only the $b_i$ above can occur.
For a different proof, see \cite{HFS}.
\end{proof}

\begin{lem}\label{stronger} Let $X$ be the blow up of $\PP^2$ at $1<n\le8$ 
general points, $n\ne4$.
Let $F$ be of the form $dE_0-m(E_1+\cdots+E_n)$ with $d\ge0$ and $m\ge0$.
Then $F$ is irregular if and only if there is a $(-1)$-curve $E$ such that 
$E\cdot F<-1$.
\end{lem}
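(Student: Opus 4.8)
The plan is to push everything onto the del Pezzo surface $X$ obtained by blowing up the $n\le 8$ general points, and to reduce the asserted equivalence to a single arithmetic inequality in $(d,m)$. First I would record three facts about $X$, where $F=dE_0-m(E_1+\cdots+E_n)$. Since $-K_X=3E_0-(E_1+\cdots+E_n)$ is ample, $X$ is del Pezzo and its Mori cone is generated by the finitely many $(-1)$-curves classified in Lemma~\ref{possSHGH}; in particular $F$ is nef iff $F\cdot E\ge 0$ for every such $E$. Next, $K_X-F$ has $E_0$-coefficient $-3-d<0$, so it is not effective and $h^2(F)=h^0(K_X-F)=0$; together with Riemann--Roch this gives $\chi(F)=\binom{d+2}{2}-n\binom{m+1}{2}$ and hence $h^1(F)=h^0(F)-\chi(F)$. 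Finally, if $F$ is nef then $F-K_X$ is nef plus ample, hence ample, so Kawamata--Viehweg vanishing gives $h^1(F)=0$; thus a nef $F$ is regular.

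The decisive simplification comes from the uniform multiplicities. For a $(-1)$-curve $E=d_EE_0-\sum_i b_iE_i$ the adjunction identity $\sum_i b_i=3d_E-1$ from Lemma~\ref{possSHGH} yields
\[
F\cdot E=d\,d_E-m\sum_i b_i=(d-3m)\,d_E+m .
\]
Writing $u=3m-d$ and letting $M$ be the maximal degree of a $(-1)$-curve on $X$ (from Lemma~\ref{possSHGH}, $M=1$ for $n=2,3$, $M=2$ for $n=5,6$, $M=3$ for $n=7$, and $M=6$ for $n=8$), the minimum of $F\cdot E$ over all $(-1)$-curves equals $m-uM$. Hence a $(-1)$-curve with $F\cdot E\le -2$ exists iff $uM\ge m+2$, and every $(-1)$-curve meets $F$ in $\ge -1$ iff $uM\le m+1$ (the case $u\le 0$ being vacuous, as then $F$ is nef). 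So the lemma is equivalent to: for $n\ne 4$, $h^1(F)>0\iff uM\ge m+2$.

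For the direction $(\Leftarrow)$ I would restrict to a $(-1)$-curve $E$ with $a:=F\cdot E\le -2$ using $0\to\OO_X(F-E)\to\OO_X(F)\to\OO_{\PP^1}(a)\to 0$. Since $h^0(\OO_{\PP^1}(a))=0$, $h^1(\OO_{\PP^1}(a))=-a-1\ge 1$, and $h^2(F)=0$, the long exact sequence gives $h^1(F)=h^1(F-E)+(-a-1)-h^2(F-E)$. Now $h^2(F-E)=h^0(K_X-F+E)$ vanishes automatically when $d\ge 4$, because then the $E_0$-coefficient $d_E-d-3\le 6-d-3<0$; the finitely many classes with $d\le 3$ are checked by hand, and there $h^1(F)>0$ follows either from $h^2(F-E)=0$ or directly from $\chi(F)<0$.

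The harder direction $(\Rightarrow)$ I would prove in contrapositive form: assuming $uM\le m+1$, show $h^1(F)=0$. The substantive point is that $\chi(F)\ge 0$ on this region. As $\chi(F)$ increases when $u$ decreases (with $m$ fixed), it suffices to evaluate it at the largest feasible $u$, which is either nef (handled by Kawamata--Viehweg) or the boundary $uM=m+1$. Substituting $d=3m-u$ into $\chi(F)=\binom{d+2}{2}-n\binom{m+1}{2}$ along that boundary yields the nonnegative values $m^2$ for $n=2$, $\binom{m}{2}$ for $n=3$, $\binom{k}{2}$ for $n=6$ (with $m=2k+1$), and $\binom{u-1}{2}$ for $n=7,8$, with similarly nonnegative expressions for $n=5$. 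This is exactly where $n\ne 4$ is used: the same substitution gives $\chi=-m<0$ for $n=4$, producing the irregular classes $d=2m-1$ that meet every $(-1)$-curve in $\ge -1$ and so violate the statement. Granting $\chi(F)\ge 0$, and since no $(-1)$-curve meets $F$ in degree $\le -2$, the Segre--Harbourne--Gimigliano--Hirschowitz statement of Conjecture~\ref{SHGH}---a theorem for $n\le 8$ general points---shows $F$ is non-special; combined with $h^2(F)=0$ and $\chi(F)\ge 0$ this forces $h^1(F)=0$ whether or not $h^0(F)=0$. The main obstacle is precisely this boundary computation together with the non-speciality input: the inequality $\chi(F)\ge 0$ is sharp (equality-achieving uniform classes occur for every admissible $n$), which is what makes the dichotomy between $F\cdot E=-1$ and $F\cdot E\le -2$ exact and singles out $n=4$ as the lone exception; everything else is formal intersection theory on $X$.
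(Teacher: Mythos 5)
Your proposal is correct, but it takes a genuinely different route from the paper's. The paper first handles effective $F$ (Nagata's theorem giving SHGH for $n\le 8$, plus the restriction sequence $0\to\OO_X(F-E)\to\OO_X(F)\to\OO_E(F)\to 0$ with $h^2(F-E)=0$ from $-K_X$ effective), and then, for non-effective $F$, runs a long case-by-case analysis for each $n\in\{2,3,5,6,7,8\}$: it pins down the exact effectivity threshold for uniform classes via explicit nef divisors and sums of $(-1)$-curves (e.g.\ $N=5E_0-2(E_1+\cdots+E_6)$ and $Q=12E_0-5(E_1+\cdots+E_6)$ for $n=6$), then descends from the boundary class $G$ by repeatedly subtracting $E_0$, checking a handful of exceptional regular classes by hand. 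You instead collapse the $(-1)$-curve condition to arithmetic via the adjunction identity $\sum_i b_i=3d_E-1$, so that $F\cdot E=m-u\,d_E$ with $u=3m-d$, minimized at the maximal degree $M$ of a $(-1)$-curve, and you replace the paper's effectivity analysis entirely by Riemann--Roch: on the region $uM\ge m+2$ you get $h^1(F)>0$ from restriction to $E$ (with $h^2(F-E)=0$ nearly automatic, and $\chi(F)<0$ covering the residue), and on the region $uM\le m+1$ you show $\chi(F)\ge 0$ by a boundary computation for each $n$, whence $h^1(F)=0$ by splitting on $h^0$: SHGH-known if $h^0(F)>0$, and $h^1(F)=-\chi(F)\le 0$ if $h^0(F)=0$. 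This is cleaner, makes the exclusion of $n=4$ structurally transparent ($\chi=-m$ along $d=2m-1$, recovering the paper's counterexample $E_0-E_1-\cdots-E_4$ at $m=1$), and your boundary zeros exactly account for the paper's hand-checked regular exceptions (e.g.\ $2E_0-\sum E_i$ and $7E_0-3\sum E_i$ for $n=6$; $5E_0-2\sum E_i$ and $13E_0-5\sum E_i$ for $n=7$; $14E_0-5\sum E_i$ and $31E_0-11\sum E_i$ for $n=8$). What the paper's approach buys is explicit effectivity thresholds for the uniform classes; what yours buys is a uniform two-line criterion in $(d,m)$ and far fewer ad hoc checks, with the same external inputs (Nagata's theorem and the classification of $(-1)$-curves).

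Two small repairs are needed in your write-up, neither a real gap. First, ``the finitely many classes with $d\le 3$'' in the $(\Leftarrow)$ direction is literally an infinite family of pairs $(d,m)$; but your $\chi(F)<0$ fallback does cover them, since $h^2(F-E)=h^0(K_X-F+E)$ can only fail to vanish when $d_E\ge d+3$, which forces $n\in\{7,8\}$ and $d\le 3$, and there any $m\ge 1$ in the bad region gives $\chi(F)=\binom{d+2}{2}-n\binom{m+1}{2}<0$ while $m=0$ makes $F$ nef --- so state it that way. Second, in the $(\Rightarrow)$ direction the largest feasible $u$ for a given $m$ is $\lfloor (m+1)/M\rfloor$, so for $n\ge 5$ you must evaluate $\chi$ at each residue of $m$ modulo $M$, not only on the exact boundary $uM=m+1$; your monotonicity-in-$u$ observation reduces this to finitely many polynomial evaluations per $n$ (e.g.\ $\binom{k+2}{2}$ for $n=6$, $m=2k$, alongside your $\binom{k}{2}$ for $m=2k+1$), and all of them are indeed nonnegative.
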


\begin{proof} Conjecture \ref{SHGH} is known to be true for $n\le 8$ 
general points
(this follows from \cite[Theorem 9]{N}). Thus if $F$ is special (i.e., 
effective and irregular), then
there is a smooth rational curve $E$ with $E^2=-1$ such that $F\cdot 
E<-1$.
Conversely, if $F$ is effective, it is easy to see that
there being a smooth rational curve $E$ with $E^2=-1$ such that $E\cdot 
F<-1$
implies that $F$ is irregular. In particular, if $F$ is effective but $F\cdot E<0$, then 
$F-E$ is
effective. But $-K_X=3E_0-E-\cdots-E_n$ is effective since $n\le 8$,
hence $K_X-(F-E)$ is not effective, and by duality we have 
$h^2(F-E)=h^0(K_X-(F-E))=0$. However, $E$ is rational, so 
$E\cdot F<-1$
implies $h^1(E, F|_E)>0$, and the long exact sequence in cohomology coming 
from
$$0\to \OO_X(F-E)\to \OO_X(F)\to \OO_E(F)\to 0$$
now shows that $h^1(F)>0$.

In checking individual examples which we will need to do to 
handle the case that $F$ is not effective, it can be useful to note that
the same argument shows $h^1(F)>0$ when $E\cdot F<-1$ whether or not $F$ is 
effective if $h^2(F-E)=0$.  (We have $h^2(F-E)=0$ for example if $(F-E)\cdot E_0>-3$, by duality
since $h^2(F-E)=h^0(K_X-F+E)$ but $(K_X-F+E)\cdot E_0=-3+2<0$ hence 
$K_X-F+E$ is not effective.)

Two additional observations will be helpful. If $E$ is a 
$(-1)$-curve, note that
$E_0\cdot E\ge0$ and hence $F\cdot E<-1$ implies $(F-E_0)\cdot E<-1$.
Also, if $h^1(F)>0$, then $h^1(F-E_0)>0$. This is because $F\cdot 
E_0\ge0$
by hypothesis, and so $h^1(E_0,\OO_{E_0}(F))=0$. Taking cohomology of
$$0\to \OO_X(F-E_0)\to \OO_X(F)\to \OO_{E_0}(F)\to 0$$
and using $h^1(F)>0$ shows that $h^1(F-E_0)>0$. 

Now assume $F$ is not effective (and hence $m>0$); we consider each $n$ individually.
\begin{itemize}
\item{$n=1$. We must skip this case, 
since $F=-2E_1$ is irregular but $E=E_1$ is the only 
$(-1)$-curve
when $n=1$, and $E\cdot F>0$.}
\vskip .04in
\item{$n=2$. It is easy to see that 
$tE_0-m(E_1+E_2)$
is effective if and only if $t\ge m$. Thus $d<m$.
But $E=E_0-E_1-E_2$ is a $(-1)$-curve with $E\cdot (mE_0-m(E_1+E_2))=-m$ 
and
$F\cdot E=d-2m<-m$. Thus, if $m>1$, $h^1(mE_0-m(E_1+E_2))>0$ (since
$mE_0-m(E_1+E_2)$ is effective and has intersection with $E$ less than 
$-1$),
and since $F$ is obtained by subtracting off copies of $E_0$, our 
observations above
imply $F\cdot E<-1$ and $h^1(F)>0$. 
If $m=1$ then $F=-E_1-E_2$ which has $h^1=1$ (because two points fail to impose 
independent conditions on forms of degree zero) and $F\cdot E<-1$.}
\vskip .04in
\item{$n=3$. Since $N=2E_0-E_1-E_2-E_3$ is nef, $G=tE_0-m(E_1+E_2+E_3)$
is not effective if $2t<3m$ (i.e., if $G\cdot  N<0$).
On the other hand, the least $t$ such that $2t\geq 3m$ is $t=3m/2$ if $m$ 
is even
and $(3m+1)/2$ if $m$ is odd. Taking $G_0$ to be $G$ in the case that $m$ is even and $t=3m/2$, we have 
$G_0=(m/2)((E_0-E_1-E_2)+ (E_0-E_1-E_3)+(E_0-E_2-E_3))$, which
is effective, while taking $G_1$ to be $G$ when $m$ is odd and $t=(3m+1)/2$, we have 
$G_1=(2E_0-E_1-E_2-E_3)+
((m-1)/2)((E_0-E_1-E_2)+ (E_0-E_1-E_3)+(E_0-E_2-E_3))$, which also
is effective. Thus $G$ is not effective if and only if $2t<3m$. Let $E=E_0-E_1-E_2$.
Then $G_0\cdot E=-m/2$, so if $m$ is even $F=G_0-iE_0$ for some $i>0$,
and we have both $F\cdot E<-1$ and $h^1(F)>0$ if
$m>2$. If $m=2$, then $F$ either has $d=0,1$ or 2. In each case
one checks directly that $F\cdot E<-1$ and $h^1(F)>0$ both 
hold. Similarly, $G_1\cdot E=-(m-1)/2$, so if $m$ is odd then
$F\cdot E<-1$ and $h^1(F)>0$ if $m>3$. If $m=3$, then $0\le d \le 4$,
and in each case one can check that $F\cdot E<-1$ and $h^1(F)>0$.
If $m=1$, then $F\cdot E<-1$ and $h^1(F)>0$ for $d=0$ but if $d=1$, then
$F\cdot E\geq-1$ for every exceptional curve $E$ and $h^1(F)=0$.}
\vskip .04in
\item{$n=4$. We must also skip this case, since
$F=E_0-E_1-E_2-E_3-E_4$ is not effective, but
$F\cdot E\ge -1$ for every $(-1)$-curve $E$, yet
$h^1(F)=1$.}
\vskip .04in
\item{$n=5$. Let $G=tE_0-m(E_1+\cdots+E_5)$.
Note that $E=2E_0-(E_1+\cdots+E_5)$ is a $(-1)$-curve and
$N=2E_0-(E_1+\cdots+E_4)$ is nef and effective.
Thus $t\ge 2m$ implies $G=mE+iE_0$ for some $i\ge0$, and hence
$G$ is effective, while $t<2m$ implies $G\cdot N<0$,
so $G$ is not effective. Thus $d<2m$, and we have
$F\cdot E<E\cdot mE=-m$. If $m>1$, using the fact that
$h^1(mE)>0$ when $m>1$ (i.e., the effective case done above), we thus have
both $h^1(F)>0$ and $F\cdot E<-1$. If $m=1$, then
we have $d=0$ or 1, and in both cases we have
$h^1(F)>0$ and $F\cdot E<-1$.}
\vskip .04in
\item{$n=6$. Let $G=tE_0-m(E_1+\cdots+E_6)$,
$E=2E_0-(E_1+\cdots+E_5)$ and
$N=5E_0-2(E_1+\cdots+E_6)$.
Let $Q=12E_0-5(E_1+\cdots+E_6)$; note that
$Q=(2E_0-(E_1+\cdots+E_6)+E_1)+\cdots+
(2E_0-(E_1+\cdots+E_6)+E_6)$ is effective, being the sum of
six $(-1)$-curves.
Note that $N$ is effective and nef: effective since
6 double points impose at most 18 conditions on the
21 dimensional space of all quintics, and nef since
$5N=2Q+E_0$, and we check that $5N$ meets each of
the irreducible components in this sum non-negatively.
Since $G\cdot N=5t-12m$, we see if $5t<12m$, then $G$ is not effective.
On the other hand, if $5t\geq12m$, then $G$ is effective.
To see this, work mod 5; i.e., let $m=5a+i$ for $0\le i\le 4$.
The least $t$ such that $5t\geq12m$ is, respectively,
$12a$, $12a+3$, $12a+5$, $12a+8$ and $12a+10$,
and $G$ is, in turn,
$aQ$,
$aQ+(3E_0-E_1-\cdots-E_6)$,
$aQ+N$,
$aQ+N+(3E_0-E_1-\cdots-E_6)$ and
$aQ+2N$.
Each of these is effective (so if $5t\geq12m$, then $G$ is effective)
and, respectively,
$G\cdot E$ is $-a$, $-a+1$, $-a$, $-a+1$ and $-a$,
hence $G\cdot E<-1$ and $h^1(G)>0$ (and hence $F\cdot E<-1$ and 
$h^1(F)>0$,
since $F=G-iE_0$ for some $i>0$)
except when $a\le 1$ or
$G=2Q+(3E_0-E_1-\cdots-E_6)$ or $G=2Q+N+(3E_0-E_1-\cdots-E_6)$.
A direct check of the exceptional cases shows that
$F\cdot E<-1$ and $h^1(F)>0$ in each case except
$F= 2E_0-(E_1+\cdots+E_6)$ and $F=7E_0-3(E_1+\cdots+E_6)$,
and in both of these cases we have $F\cdot E=-1$ for every exceptional curve $E$
and $h^1(F)=0$.}
\vskip .04in
\item{$n=7$. Let $G=tE_0-m(E_1+\cdots+E_7)$,
$E=3E_0-(2E_1+E_2+\cdots+E_7)$ and
let $N=8E_0-3(E_1+\cdots+E_7)$.
Let $Q=21E_0-8(E_1+\cdots+E_7)$; note that
$Q=(3E_0-(E_1+\cdots+E_7)-E_1)+\cdots+
(3E_0-(E_1+\cdots+E_7)-E_7)$ is effective, being the sum of
seven $(-1)$-curves.
Note that $N$ is effective and nef: effective since
7 triple points impose at most 42 conditions on the
45 dimensional space of all octics, and nef since
$8N=3Q+E_0$, and we check that $8N$ meets each of
the irreducible components in this sum non-negatively.
Since $G\cdot N=8t-21m$, we see if $8t<21m$, then $G$ is not effective.
On the other hand, if $8t\geq21m$, then $G$ is effective.
To see this, work mod 8; i.e., let $m=8a+i$ for $0\le i\le 7$.
The least $t$ such that $8t\geq21m$ is, respectively,
$21a$, $21a+3$, $21a+6$, $21a+8$, $21a+11$, $21a+14$, $21a+16$ and 
$21a+19$,
and $G$ is, in turn,
$aQ$,
$aQ+(3E_0-E_1-\cdots-E_7)$,
$aQ+2(3E_0-E_1-\cdots-E_7)$,
$aQ+N$ and
$aQ+N+(3E_0-E_1-\cdots-E_7)$,
$aQ+N+2(3E_0-E_1-\cdots-E_7)$,
$aQ+2N$ and
$aQ+2N+(3E_0-E_1-\cdots-E_7)$.
Each of these is effective (so if $8t\geq21m$, then $G$ is effective).
But $G\cdot E$ is, respectively, $-a$, $-a+1$, $-a+2$, $-a$, $-a+1$, $-a+2$, $-a$ and $-a+1$.
Thus $h^1(F)>0$ and $F\cdot E<-1$ unless $a\le 1$, or 
$G$ is either 
$2Q+(3E_0-E_1-\cdots-E_7)$,
$2Q+2(3E_0-E_1-\cdots-E_7)$,
$3Q+2(3E_0-E_1-\cdots-E_7)$,
$2Q+N+(3E_0-E_1-\cdots-E_7)$,
$2Q+N+2(3E_0-E_1-\cdots-E_7)$,
$3Q+N+2(3E_0-E_1-\cdots-E_7)$ or
$2Q+2N+(3E_0-E_1-\cdots-E_7)$.
But in each of these exceptions (except $G=aQ$ with $a=0$, which does not give
rise to any cases of $F$), we have $(G-E_0-E)\cdot E_0>-3$ so $h^2(G-E_0-E)=0$
and, unless $G$ is either 
$0Q+2(3E_0-E_1-\cdots-E_7)$ or $0Q+N+2(3E_0-E_1-\cdots-E_7)$,
we have $(G-E_0)\cdot E<-1$, so $h^1(G-E_0)>0$
and hence $F\cdot E<-1$ and $h^1(F)>0$.
If $G=2(3E_0-E_1-\cdots-E_7)$, then $G-E_0$ has $h^1=0$ 
and $(G-E_0)\cdot E\geq -1$ for every exceptional curve $E$,
while $G-2E_0$ has $h^1>0$ and $G\cdot E<-1$.
If $G=N+2(3E_0-E_1-\cdots-E_7)$, then $G-E_0$ has $h^1=0$ 
and $(G-E_0)\cdot E\geq -1$ for every exceptional curve $E$,
while $G-2E_0$ has $h^1>0$ and $G\cdot E<-1$.}
\vskip .04in
\item{$n=8$. Let $G=tE_0-m(E_1+\cdots+E_8)$,
$E=6E_0-(3E_1+2E_2+\cdots+2E_8)$ and
let $N=17E_0-6(E_1+\cdots+E_8)$.
Let $Q=48E_0-17(E_1+\cdots+E_8)$; note that
$Q=(6E_0-2(E_1+\cdots+E_8)-E_1)+\cdots
(6E_0-2(E_1+\cdots+E_8)-E_8)$ is effective, being the sum of
eight $(-1)$-curves.
Note that $N$ is effective and nef: effective since
8 sextuple points impose at most 168 conditions on the
171 dimensional space of all 17-ics, and nef since
$17N=6Q+E_0$, and we check that $17N$ meets each of
the irreducible components in this sum non-negatively.
Since $G\cdot N=17t-48m$, we see if $17t<48m$, then $G$ is not effective.
On the other hand, if $17t\geq48m$, then $G$ is effective.
To see this, work mod 17; i.e., let $m=17a+i$ for $0\le i\le 16$.
The least $t$ such that $17t\geq48m$ is, respectively,
$48a+3i$ for $0\le i\le 5$, $48a+17+3(i-6)$ for $6\le i\le 11$,
and $48a+34+3(i-12)$ for $12\le i\le 16$, and $G$ is:
$G=aQ+i(3E_0-E_1-\cdots-E_8)$ for $0\le i\le 5$;
$G=aQ+N+(i-6)(3E_0-E_1-\cdots-E_8)$ for $6\le i\le 11$; and
$G=aQ+2N+(i-12)(3E_0-E_1-\cdots-E_8)$ for $12\le i\le 16$.
Each of these is effective (so if $17t\geq48m$, then $G$ is effective).

But $F$ is of the form $G-jE_0$ for some $j\geq1$ and some $G$ on this 
list.
Taking $j=1$ for each $G$ (so $F=G-E_0$), we have $(F-E)\cdot E_0>-3$
and thus $h^2(F-E)=0$ (except for the cases $G=0$ and
$G=3E_0-E_1-\cdots-E_8$, but if $G=0$ then $F\cdot E_0<0$ which we
excluded by hypothesis, and if $G=3E_0-E_1-\cdots-E_8$,
then $F=dE_0-E_1-\cdots-E_8$ for $0\le d\le2$, and in these cases
we have both $F\cdot E<-1$ and $h^1(F)>0$).
Thus whenever we have $F\cdot E<-1$ we have $h^1(F)>0$.
The only remaining cases $F=G-jE_0$ for which we do not have $F\cdot E<-1$ 
are:
$F=5(3E_0-E_1-\cdots-E_8)-E_0$ and
$F=N+5(3E_0-E_1-\cdots-E_8)-E_0$.
A direct check of these exceptional cases shows that
$F\cdot E=-1$ for every exceptional curve $E$ and $h^1(F)=0$.}
\end{itemize}
\end{proof}

\begin{lem}\label{Boundinj}
For $I=\langle l_1^t,\ldots,l_n^t\rangle \subseteq S$ with $n\leq 8$ and 
$l_i \in S_1$ generic,
the map $A_{m-1}\stackrel{\mu_\ell}\rightarrow A_m$ is injective for 
\begin{itemize}
\item $m<\lceil \frac{17(t-1)+2}{11}\rceil $ if $n=8$.
\item $m< \lceil \frac{8(t-1)+2}{5}\rceil $  if $n=7$.
\item $m< \lceil \frac{5(t-1)+2}{3}\rceil $  if $n=5,6$.
\end{itemize}
\end{lem}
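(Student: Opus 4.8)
The plan is to reduce injectivity of $\mu_\ell\colon A_{m-1}\to A_m$ to a regularity statement about an explicit divisor on a blow up of $\PP^2$, and then to control that regularity using the classification of $(-1)$-curves in Lemma \ref{possSHGH} together with Lemma \ref{stronger}. Since $r=4$, the restriction $L=V(\ell)$ is a $\PP^2$ and the generic forms restrict to generic forms, so the relevant divisor $D'_m=mE'_0-(m-t+1)(E'_1+\cdots+E'_n)$ lives on the blow up $X'$ of $\PP^2$ at $n$ general points. For $m\le t-1$ the map is injective by Proposition \ref{AHcor}(a), so I would assume $m\ge t$; then the multiplicity $m-t+1$ is $\ge 1$, and by Equation \eqref{ExpDimCor2} and the last sentence of Remark \ref{sect2Rem}, the map $\mu_\ell\colon A_{m-1}\to A_m$ is injective as soon as $h^1(D'_m)=0$ (via the vanishing of $h^0({\mathcal S}(I)|_L(m))$ in Equation \eqref{les}). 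Thus it suffices to show that $D'_m$ is regular throughout the stated range of $m$.

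Because $n\in\{5,6,7,8\}$ satisfies $1<n\le8$ and $n\ne4$, Lemma \ref{stronger} applies to $F=D'_m=dE_0-m'(E_1+\cdots+E_n)$ with $d=m\ge0$ and $m'=m-t+1\ge0$: it says $D'_m$ is regular precisely when $E\cdot D'_m\ge-1$ for every $(-1)$-curve $E$. Writing $E=dE_0-\sum b_iE_i$ and using the adjunction relation $\sum b_i=3d-1$ established in the proof of Lemma \ref{possSHGH}, I would compute
\[
E\cdot D'_m=md-(m-t+1)(3d-1),
\]
so for $d\ge1$ the condition $E\cdot D'_m\ge-1$ is equivalent to $m\le\frac{(3d-1)(t-1)+1}{2d-1}$, while for the curves with $d=0$ (the $E_i$) one checks $E\cdot D'_m=m-t+1\ge1$, imposing no constraint once $m\ge t$. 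A one-line derivative computation shows the quantity $\frac{(3d-1)(t-1)+1}{2d-1}$ is strictly decreasing in $d$, so the binding constraint always comes from the $(-1)$-curve of largest degree $d$ that is supported on the $n$ points.

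Finally I would read off that maximal degree from Lemma \ref{possSHGH} by discarding the curves whose multiplicity vectors have more than $n$ nonzero entries: the maximal $d$ is $2$ for $n=5,6$ (curve $2E_0-(E_1+\cdots+E_5)$), $3$ for $n=7$ (curve $3E_0-(2E_1+E_2+\cdots+E_7)$), and $6$ for $n=8$ (curve $6E_0-(3E_1+2E_2+\cdots+2E_8)$). Substituting these values gives regularity of $D'_m$ for $m\le\frac{5(t-1)+1}{3}$, $m\le\frac{8(t-1)+1}{5}$, and $m\le\frac{17(t-1)+1}{11}$ respectively, and the integer identity $\lceil\frac{a+1}{b}\rceil-1=\lfloor\frac{a}{b}\rfloor$ rewrites each bound in the ceiling form stated in the lemma. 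The main point to get right is the bookkeeping in this last step—matching the rational threshold $m\le\frac{(3d-1)(t-1)+1}{2d-1}$ to the exact ceiling expression—together with verifying that no lower-degree $(-1)$-curve supported on fewer points yields a smaller threshold, which is exactly what the monotonicity in $d$ guarantees.
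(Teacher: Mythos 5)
Your proof is correct and follows essentially the same route as the paper's: reduce injectivity of $\mu_\ell$ to $h^1(D'_m)=0$ via Remark \ref{sect2Rem} and Equations \eqref{les}, \eqref{ExpDimCor2}, invoke Lemma \ref{stronger} together with the classification in Lemma \ref{possSHGH}, use $\sum_i b_i=3d-1$ and the monotonicity in $d$ of the resulting threshold, and plug in the maximal available degrees $d=2,3,6$ for $n=5,6$, $n=7$, $n=8$. Your bound $m\le\frac{(3d-1)(t-1)+1}{2d-1}$ is, for integer $m$, equivalent to the paper's $m<\frac{(3d-1)(t-1)+2}{2d-1}$, your ceiling/floor bookkeeping matches the stated lemma, and you even use the correct value $\sum_i b_i=3d-1$ where the paper's proof text misprints it as $3d+1$ before computing with $3d-1$.
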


\begin{proof}
If $m\le t$, then $A_{m-1}\rightarrow A_m$ is injective since $A_{m-1}=0$ 
by Corollary \ref{invDimCor}.
So suppose $m\geq t$. By Equations \eqref{les}, \eqref{SyzSections2} and 
\eqref{ExpDimCor2},
$A_{m-1}\rightarrow A_m$ is injective if $h^1(D'_m)=0$, where
$D'_m$ is the line bundle $mE'_0-(m-t+1)(\sum_{i=1}^n E'_i)$ on $\PP^2$.
By Lemma \ref{stronger}, $h^1(D'_m)=0$ if $D'_m\cdot E\ge -1$ for every
$(-1)$-curve $E=dE'_0-\sum_ib_iE'_i$.
Since $m\geq t$, we have $D'_m\cdot E'_i\ge 0$ for all $i$, so we now need 
to check
the remaining $(-1)$-curves listed in Lemma \ref{possSHGH}.
I.e., we may assume $d\geq1$. It suffices to show that
\[
md-(m-t+1)(\sum_{i=1}^8 b_i)> -2,
\]
but $\sum_{i=1}^8 b_i=3d+1$ for any $(-1)$-curve, so this simplifies to
$md-(m-t+1)(3d-1) > -2$ or $m< \frac{(3d-1)(t-1)+2}{(2d-1)}$.
The right hand side is decreasing as a function of $d$.
Thus for each $n$ we use the largest $d$ available;
i.e., $d=6$ for $n=8$, $d=3$ for $n=7$ and $d=2$ for $n=5,6$.
Plugging in these values of $d$ gives the result.
\end{proof}

\begin{lem}\label{5forms}
For $I=\langle l_1^t,\ldots,l_5^t\rangle \subseteq S$ with $l_i \in S_1$ 
generic,
$S/I$ fails to have WLP for all $t\ge 3$.
\end{lem}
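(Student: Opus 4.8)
The plan is to exhibit, for the generic linear form $\ell=x_4$, a single degree in which the multiplication map $\mu_\ell\colon A_{m-1}\to A_m$ is \emph{neither} injective nor surjective. Since having maximal rank is a Zariski-open condition on $\ell$ and the $l_i$ are generic, the generic $\ell$ realizes the best possible rank in each degree, so failure of full rank for this $\ell$ forces failure of WLP. The degree I would target is $m_0=\lceil\frac{5(t-1)+2}{3}\rceil$, which is precisely the threshold appearing in Lemma~\ref{Boundinj}: for $m<m_0$ every $\mu_\ell$ is injective, and $m_0$ is the first degree in which the conic $(-1)$-curve $E=2E'_0-(E'_1+\cdots+E'_5)$ through the five general points of $L\cong\PP^2$ forces irregularity. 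Indeed $E\cdot D'_{m_0}=5(t-1)-3m_0\le-2$ by the choice of $m_0$, so Lemma~\ref{stronger} (applicable since $n=5\le8$, $n\ne4$) gives $h^1(D'_{m_0})>0$. A short check shows $m_0-1\ge t$ for all $t\ge3$, so both $A_{m_0-1}$ and $A_{m_0}$ lie in the fat-point range of Corollary~\ref{invDimCor} and of Equation~\eqref{alt}.

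The central computation uses the long exact sequence \eqref{les} together with Remark~\ref{sect2Rem}. Taking $m=m_0-1$ in \eqref{les}, the kernel of $\mu_\ell\colon A_{m_0-1}\to A_{m_0}$ is the cokernel of $\phi_{m_0-1}\colon H^0(\mathcal S(I)(m_0))\to H^0(\mathcal S(I)|_L(m_0))$, whose rank equals $h^1(D_{m_0})-h^1(D_{m_0-1})$ by \eqref{ExpDimCor}, while $h^0(\mathcal S(I)|_L(m_0))=h^1(D'_{m_0})$ by \eqref{ExpDimCor2}. The one external input I need is that $D_{m_0}$ is regular on the blow-up of $\PP^3$, i.e.\ $h^1(D_{m_0})=0$; granting this, non-negativity of $\rk\phi_{m_0-1}$ forces $h^1(D_{m_0-1})=0$ as well, so $\dim\ker\mu_\ell=h^1(D'_{m_0})$. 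Since $h^{\ge2}$ of a twisted ideal sheaf of points vanishes, these same vanishings make $\dim A_j=\chi(D_j)$ for $j=m_0-1,m_0$, and a telescoping of binomials identifies $\dim A_{m_0}-\dim A_{m_0-1}$ with the Euler characteristic $\chi(D'_{m_0})=h^0(D'_{m_0})-h^1(D'_{m_0})$ on $\PP^2$. Hence
\[
\dim\coker\mu_\ell=\dim A_{m_0}-\dim A_{m_0-1}+\dim\ker\mu_\ell=h^0(D'_{m_0}).
\]
Thus the kernel and cokernel of $\mu_\ell$ are governed precisely by $D'_{m_0}$, and it suffices to prove that $D'_{m_0}$ is \emph{special}.

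Irregularity ($h^1(D'_{m_0})>0$) is already in hand from the first paragraph. For effectivity I would write $D'_{m_0}=aE+(m_0-2a)E'_0$ with $a=m_0-t+1$ the multiplicity and $E$ the conic; this is a sum of effective classes as soon as $m_0-2a\ge0$, i.e.\ $m_0\le2(t-1)$, and a direct check gives $\lceil\frac{5(t-1)+2}{3}\rceil\le2(t-1)$ for every $t\ge3$. Hence $h^0(D'_{m_0})\ge h^0(aE)=1>0$. Therefore $D'_{m_0}$ is special, so $\mu_\ell$ has nonzero kernel and nonzero cokernel, and WLP fails in degree $m_0-1$.

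The main obstacle is the remaining claim $h^1(D_{m_0})=0$ on the blow-up of $\PP^3$, i.e.\ that the five general points taken with multiplicity $a$ impose independent conditions on forms of degree $m_0$. This is exactly the regime of the results of De Volder--Laface \cite{DL}, with Alexander--Hirschowitz \cite{ah} handling the base case $t=3$ (where $D_4=4E_0-2\sum E_i$ is nonspecial, five double points not lying in the special list). Conceptually, the only obstruction for five general points in $\PP^3$ is the twisted cubic $\Gamma$ through them, and $\Gamma$ is forced into the base locus only once $5a>3m_0$, i.e.\ $m_0>\frac{5(t-1)}{2}$; since $m_0\approx\frac{5(t-1)}{3}$ lies well below this, the system on $\PP^3$ is still regular exactly where its restriction to $\PP^2$ has already become irregular. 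This gap between the conic threshold on $\PP^2$ and the twisted-cubic threshold on $\PP^3$ is the geometric source of the failure of WLP, and turning the heuristic about $\Gamma$ into the rigorous vanishing $h^1(D_{m_0})=0$ is the step where I would rely on \cite{DL}.
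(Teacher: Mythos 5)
Your proposal is correct, and its skeleton is the same as the paper's: both work at the critical degree $m_0=\lceil 5t/3\rceil-1$, both invoke De Volder--Laface \cite{DL} to get $h^1(D_m)=0$ on the blow-up of $\PP^3$ in the range $t\le m\le 2t-2$ (so that \eqref{les} collapses to $0\to H^1(D'_{m_0})\to A_{m_0-1}\to A_{m_0}$), and both use the conic $(-1)$-curve $E=2E'_0-(E'_1+\cdots+E'_5)$ with $E\cdot D'_{m_0}\le -2$ together with Lemma~\ref{stronger} to force $h^1(D'_{m_0})>0$. Where you genuinely diverge is in how non-surjectivity is concluded. The paper settles it by the crude count $\dim_{\KK}A_{m_0}\ge\dim_{\KK}A_{m_0-1}$, which its binomial computation delivers only for $t\ge 6$; it must then handle $t=3$ separately via Corollary~\ref{WLPfail2} and $t=4,5$ by writing out Hilbert functions and quoting Nagata \cite{N} for $h^1(D'_{m_0})=3$ and $6$. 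You instead prove the exact identity $\dim\coker\mu_\ell=h^0(D'_{m_0})$: once $h^1(D_{m_0})=h^1(D_{m_0-1})=0$ (your deduction of the second vanishing from non-negativity of $\rk\phi_{m_0-1}$ is valid, and it also follows directly from \cite{DL} since $m_0-1\le 2t-2$), one has $\dim A_j=\chi(D_j)$ for $j=m_0-1,m_0$, and the binomials telescope to $\chi(D'_{m_0})=h^0(D'_{m_0})-h^1(D'_{m_0})$, so the kernel is $h^1(D'_{m_0})$ and the cokernel is $h^0(D'_{m_0})$. Your effectivity decomposition $D'_{m_0}=(m_0-t+1)E+\bigl(m_0-2(m_0-t+1)\bigr)E'_0$, valid since $m_0\le 2(t-1)$ for all $t\ge3$ (the same inequality the DL step needs), then yields speciality of $D'_{m_0}$ and hence failure of full rank uniformly in $t\ge3$, eliminating the paper's case analysis; reassuringly it reproduces the paper's numbers ($h^0(D'_{m_0})=1$ with $h^1=3,6$ for $t=4,5$, and the $(4,3,5)$ case of Corollary~\ref{WLPfail2}). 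Two minor remarks: your closing heuristic about the twisted cubic threshold $5a>3m_0$ is motivation only and is superseded by the actual DL criterion $m_0\ge 2(m_0-t+1)$, which you did verify; and your opening semicontinuity reduction to $\ell=x_4$ is the same implicit reduction the paper makes in Remark~\ref{sect2Rem}.
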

\begin{proof}
For $t=3$, the result follows from Corollary~\ref{WLPfail2}.
For larger $t$, we will apply the
main result of De Volder-Laface \cite{DL} on fatpoints in $\PP^3$.
Assume $2a\geq 4b\geq0$; then the divisor $aE_0-b\sum_{i=1}^n E_i$
obtained by blowing up $n\le 8$ general points on $\PP^3$
is effective since $\binom{a+3}{3}>5\binom{b+2}{3}$ and
by \cite{DL} it is non-special since $a>2b-2$. So
for $D_m=mE_0-\sum_{i=1}^5(m-t+1)E_i$ we have $h^1(D_m)=0$
if $2m \geq 4(m-t+1)$ and $m\geq t$, or equivalently if $2t-2\geq m\geq 
t$.
So if $2t-2\geq m\geq t$ we have
\begin{equation}\label{dVL}
\dim_{\KK}A_m = h^0(D_m)=\binom{m+3}{3} -5\binom{m-t+3}{3}
\end{equation}
by Equation \eqref{alt}
and $h^0({\mathcal S}(I)(m))=h^1(D_m)=0$ by Equation \eqref{ExpDimCor}.
Now by Equation \eqref{les} we have an exact sequence
\[
0 \longrightarrow H^1(D'_m) \longrightarrow A_{m-1} \longrightarrow A_m
\]
as long as $2t-2\geq m\geq t$.

For $m = \lceil \frac{5t}{3}\rceil -1$, we have $D'_m\cdot 
(2E_0-E_1-\cdots-E_5)\leq-2$,
so $h^1(D'_m)>0$ by Lemma \ref{stronger}. (Note for this value of $m$
we have $t\le m-1<m\leq 2t-2$ for $t\geq3$.)
Thus, to prove failure of WLP, it suffices to show
\[
\dim_{\KK}A_{\lceil \frac{5t}{3}\rceil -1} \ge \dim_{\KK}A_{\lceil 
\frac{5t}{3}\rceil -2}.
\]
We obtain these dimensions from Equation \eqref{dVL}. So the result will 
follow if
\[
\binom{\lceil \frac{5t}{3}\rceil -1+3}{3} -5\binom{\lceil 
\frac{5t}{3}\rceil -1-t+3}{3} \ge
\binom{\lceil \frac{5t}{3}\rceil -2+3}{3} -5\binom{\lceil 
\frac{5t}{3}\rceil -2-t+3}{3}.
\]
A calculation shows this holds for all $t \ge 6$.
For the case $t=4$, the Hilbert function of $A$ is
$(1,4,10,20,30,36,34,20)$, and we have the sequence
\[
0 \longrightarrow H^1(D_6) \longrightarrow A_5 \longrightarrow A_6.
\]
Since $\dim_{\KK}A_5 = 36$ and $\dim_{\KK}A_6 = 34$ and by \cite{N} 
$h^1(D_6) = 3$, so
$A_5\rightarrow A_6$ has rank $33$, and WLP fails.

For $t=5$, the Hilbert function of $A$ is
$(1,4,10,20,35,51,64,70,65,45,16)$, and we have the sequence
\[
0 \longrightarrow H^1(D_8) \longrightarrow A_7 \longrightarrow A_8.
\]
Since $\dim_{\KK}A_7 = 70$ and $\dim_{\KK}A_8= 65$ and by \cite{N} 
$h^1(D_8) = 6$, so
$A_7\rightarrow A_8$ has rank $64$, and WLP fails.
\end{proof}

\begin{lem}\label{6forms}
For $I=\langle l_1^t,\ldots,l_6^t\rangle \subseteq S$ with $l_i \in S_1$ 
generic,
$S/I$ has WLP for all $t \le 14$, and fails to have WLP for all $t \ge 
27$.
\end{lem}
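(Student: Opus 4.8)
The plan is to translate WLP for $A=S/I$ into the (non)speciality of the divisors $D'_m = mE'_0-(m-t+1)(E'_1+\cdots+E'_6)$ on the blow up $Y$ of $\PP^2$ at six general points (a cubic del Pezzo surface), via the exact sequence \eqref{les}. The starting point is that for generic $\ell=x_4$ and $t+1\le m\le 2t-2$, De Volder--Laface \cite{DL} gives $h^1(D_m)=h^1(D_{m-1})=0$ on the blow up of $\PP^3$ (the range $a\ge 2b$ already used in Lemma \ref{5forms}), so $h^0(\mathcal{S}(I)(m))=h^1(D_m)=0$ by \eqref{ExpDimCor} and the relevant segment of \eqref{les} reads $0\to H^1(D'_m)\to A_{m-1}\xrightarrow{\mu_\ell}A_m\to\coker\to0$. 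Hence $\dim_\KK\ker\mu_\ell=h^1(D'_m)$ by \eqref{ExpDimCor2}, while comparing with \eqref{alt} gives $\dim_\KK A_m-\dim_\KK A_{m-1}=\chi(D'_m)=\binom{m+2}{2}-6\binom{m-t+2}{2}$ and therefore $\dim_\KK\coker\mu_\ell=\chi(D'_m)+h^1(D'_m)=h^0(D'_m)$. The reduction I would record is: \emph{in this range $\mu_\ell$ fails to be injective iff $D'_m$ is irregular and fails to be surjective iff $D'_m$ is effective, so WLP fails in degree $m-1$ exactly when $D'_m$ is special.}

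Next I would pin down the two thresholds on $Y$. Since $n=6\le8$ and $n\ne4$, Lemmas \ref{possSHGH} and \ref{stronger} apply, and among the $(-1)$-curves only the lines $E'_0-E'_i-E'_j$ and the conics $2E'_0-\sum_{l\ne j}E'_l$ meet the symmetric class $D'_m$ negatively; the conic is most negative, with $D'_m\cdot(2E'_0-\sum_{l\ne j}E'_l)=-3m+5t-5$, so $h^1(D'_m)>0$ exactly for $m\ge m_0:=\lceil(5t-3)/3\rceil$. For effectivity I would use that the symmetric part of the effective cone of the cubic surface is spanned by $E'_1+\cdots+E'_6$ and by the sum of the six conics $12E'_0-5(E'_1+\cdots+E'_6)$; granting Conjecture \ref{SHGH}, which holds for $n\le8$ by \cite{N}, this yields $h^0(D'_m)>0$ iff $5m\ge12(m-t+1)$, i.e. $m\le m_{\mathrm{eff}}:=\lfloor12(t-1)/7\rfloor$. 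By the reduction above, WLP fails iff there is an integer $m$ with $m_0\le m\le m_{\mathrm{eff}}$, equivalently an integer in the half-open interval $\big(\tfrac{5t-4}{3},\tfrac{12(t-1)}{7}\big]$, whose length is $(t-8)/21$.

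For $t\le14$ I would check that this interval contains no integer (it is empty for $t\le8$, and a short calculation handles $9\le t\le14$), so $m_0>m_{\mathrm{eff}}$. Then for $m<m_0$ the map $\mu_\ell$ is injective by Lemma \ref{Boundinj} (the case $n=6$), while for $m\ge m_0$ we have $m>m_{\mathrm{eff}}$, so $A'_m=h^0(D'_m)=0$ by Corollary \ref{invDimCor} and $\mu_\ell$ is surjective because $\coker\mu_\ell\hookrightarrow A'_m$; hence every $\mu_\ell$ is injective or surjective and WLP holds. For $t\ge27$ I would instead choose an integer $m$ in the interval; since $m_{\mathrm{eff}}<2t-2$, the range needed for \cite{DL} is met for both $m$ and $m-1$, the reduction applies, and $D'_m$ is special, giving failure. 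Such an $m$ exists automatically once the length is $\ge1$, i.e. for $t\ge29$, and the two values $t=27,28$ are checked by hand (for $t=27$ one finds $m=44$, $k=18$, conic intersection $-2$, and $5\cdot44=220\ge216=12k$, so $D'_{44}$ is special).

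The main obstacle is the effectivity statement $h^0(D'_m)>0\iff m\le m_{\mathrm{eff}}$: the $(-1)$-curve test of Lemma \ref{stronger} controls only irregularity, so I must separately compute the symmetric effective cone of the cubic surface and use \cite{N} to ensure that a class lying in that cone actually carries a section. The remaining work is purely arithmetic—managing the floor/ceiling in $m_0$ and $m_{\mathrm{eff}}$ so that the interval is integer-free for every $t\le14$ yet meets $\Z$ for every $t\ge27$. It is worth noting that the irregular behaviour of this interval for $15\le t\le26$ (it contains an integer for some such $t$ but not others) is precisely why the statement leaves that range open.
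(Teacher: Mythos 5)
Your argument is correct, and every ingredient you invoke is available in the paper: injectivity below the threshold is Lemma \ref{Boundinj}; the identification of $\ker\mu_\ell$ with $H^1(D'_m)$ in the De Volder--Laface range follows from \eqref{les}, \eqref{ExpDimCor} and \eqref{ExpDimCor2}; the identity $\dim_\KK\coker\mu_\ell=\dim_\KK A'_m=h^0(D'_m)$ in fact holds for \emph{all} $m\ge t$ by Corollary \ref{invDimCor} (you derive it as $\chi(D'_m)+h^1(D'_m)$, which needs the DL range, but the inverse-system identity gives it unconditionally, which is what your surjectivity step for $m>m_{\mathrm{eff}}$ silently uses); the irregularity threshold $m_0=\lceil(5t-3)/3\rceil$ comes from the conic $2E'_0-\sum_{l\ne j}E'_l$ together with Lemma \ref{stronger}; and the effectivity threshold $m_{\mathrm{eff}}=\lfloor 12(t-1)/7\rfloor$ is exactly the content of the $n=6$ bullet in the proof of Lemma \ref{stronger} (the nef class $N=5E'_0-2(E'_1+\cdots+E'_6)$ and the decomposition of $Q=12E'_0-5(E'_1+\cdots+E'_6)$ into six $(-1)$-curves) --- note this step needs only Nagata's irreducibility of the $(-1)$-classes, not Conjecture \ref{SHGH} as you suggest. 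Where you genuinely depart from the paper is the endgame: the paper fixes the single degree $m=\lceil 5t/3\rceil-1$ (which equals your $m_0$), shows the difference $\dim_\KK A_m-\dim_\KK A_{m-1}$ of Equation \eqref{6pteqn} is positive for $t\ge 48$ to get failure there, and then disposes of $t\le 14$ and $27\le t\le 47$ via Lemma \ref{Boundinj}, Proposition~2.1 of \cite{MMN} and unrecorded case-by-case checks; you instead prove the clean criterion that WLP fails if and only if the half-open interval $\bigl(\frac{5t-4}{3},\frac{12(t-1)}{7}\bigr]$, of length $(t-8)/21$, contains an integer, reducing the whole lemma to elementary arithmetic (empty for $t\le 8$, finitely checked for $9\le t\le 14$ and $t=27,28$, automatic for $t\ge 29$). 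Your route buys uniformity --- it eliminates the appeal to \cite{MMN} and the opaque case analysis --- and, as a bonus, it explains the sporadic behaviour the paper only spot-checks in the gap $15\le t\le 26$: your criterion predicts failure exactly at $t=15,18,21,22,24,25$ and WLP at the rest, matching the paper's computations at $t=15$ and $t=26$.
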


\begin{proof}
Let $m=\lceil \frac{5t}{3}\rceil -1$, as in the proof of Lemma 
\ref{5forms}.
Mimicking the argument there, as long as $t\geq3$ we have
\begin{equation}\label{6pteqn}
\dim_{\KK}A_{m} - \dim_{\KK}A_{m-1} = \binom{m +3}{3}
-6\binom{m -t+3}{3}  - \binom{m +2}{3} +6\binom{m -t+2}{3}.
\end{equation}
and an exact sequence
\[
0 \longrightarrow H^1(D'_m) \longrightarrow A_{m-1} \longrightarrow A_m.
\]
But by Lemma \ref{stronger},
\[
C \cdot D'_m = \begin{cases} -2 & \mbox{ if } t \mbox{ mod }3 = 0 \cr
                                                      -3 & \mbox{ if } t 
\mbox{ mod }3 = 1 \cr
                                                      -4 & \mbox{ if } t 
\mbox{ mod }3 = 2 \end{cases}
\]
where $C=2E_0'-E'_1-\cdots-E'_5$, hence $h^1(D'_m)>0$.
Since Equation \eqref{6pteqn} is positive for $t\geq48$, we see
WLP fails for $t\geq48$.
Using Lemma~\ref{Boundinj} and Proposition~2.1 of \cite{MMN} and analyzing
individual cases shows that WLP holds for all $t \le 14$, and fails for 
all $27 \le t \le 47$.
Finally, for $t=15$ WLP fails: $h^1(D_m)=6$,
$\dim_{\KK}A_{m-1}=1610$ and $\dim_{\KK}A_m=1605$,
and for $t=26$ WLP holds: $h^1(D_m)=36 = \dim_{\KK}A_{m-1}-\dim_{\KK}A_m$.
\end{proof}

\begin{thm}\label{Main2}
Let $I=\langle l_1^t,\ldots,l_n^t\rangle \subseteq \KK[x_1,x_2,x_3,x_4]$ 
with
$l_i \in S_1$ generic. If $n \in \{5,6,7,8 \}$, then WLP fails, 
respectively,
for $t \ge \{3,27,140,704\}$.
\end{thm}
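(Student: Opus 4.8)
The plan is to show, for each $n\in\{5,6,7,8\}$ and each $t$ in the stated range, that there is a single degree $m$ in which the multiplication map $\mu_\ell\colon A_{m-1}\to A_m$ by a generic $\ell=x_4$ is simultaneously non-injective and non-surjective, which defeats WLP (and, by upper-semicontinuity of rank, defeats it for every $\ell$, so it suffices to treat $\ell=x_4$). The cases $n=5$ and $n=6$ are already recorded: Lemma~\ref{5forms} gives failure for $t\ge3$, and Lemma~\ref{6forms} gives failure for $t\ge27$, so the genuinely new work is $n=7,8$, which I would handle by the same template. The engine throughout is the segment $H^0(\mathcal{S}(I)|_L(m))\to A_{m-1}\xrightarrow{\mu_\ell}A_m$ of the long exact sequence \eqref{les} (with $A_j=H^1(\mathcal{S}(I)(j))$ by \eqref{A=H1}): the kernel of $\mu_\ell$ is the cokernel of $H^0(\mathcal{S}(I)(m))\to H^0(\mathcal{S}(I)|_L(m))$, and by \eqref{ExpDimCor} and \eqref{ExpDimCor2} these two groups have dimensions $h^1(D_m)$ and $h^1(D'_m)$. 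Hence, once I arrange $h^1(D_m)=0$, the kernel has dimension exactly $h^1(D'_m)$, and $\mu_\ell$ fails to be injective precisely when $h^1(D'_m)>0$.

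For non-injectivity I would invoke Lemma~\ref{stronger} (applicable since $n\in\{7,8\}$ satisfies $1<n\le8$ and $n\ne4$): the restricted divisor $D'_m=mE'_0-(m-t+1)(E'_1+\cdots+E'_n)$ on the blow-up of $\PP^2$ is irregular as soon as some $(-1)$-curve $E$ has $D'_m\cdot E<-1$. I would take $E$ to be the $(-1)$-curve of largest degree $d$ from Lemma~\ref{possSHGH} — so $d=3$ for $n=7$ and $d=6$ for $n=8$, with $\sum b_i=3d-1$ — giving $D'_m\cdot E=md-(m-t+1)(3d-1)$ exactly as in the proof of Lemma~\ref{Boundinj}; this is $\le-2$ precisely when $m\ge\frac{(3d-1)(t-1)+2}{2d-1}$. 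I would then set $m$ to the smallest such integer, namely $m=\lceil\frac{8(t-1)+2}{5}\rceil$ for $n=7$ and $m=\lceil\frac{17(t-1)+2}{11}\rceil$ for $n=8$, so that $h^1(D'_m)>0$. Because this forces $m\approx\frac{3d-1}{2d-1}t$ (about $1.6t$, resp. $1.55t$), in particular $t\le m<2t$, the De~Volder--Laface non-specialty criterion \cite{DL} applies to the divisors $D_m$ and $D_{m-1}$ on the blow-up of $\PP^3$, yielding $h^1(D_m)=h^1(D_{m-1})=0$. This does double duty: it both guarantees $\dim\ker\mu_\ell=h^1(D'_m)>0$ and makes the exact Hilbert-function formula of \eqref{alt}--\eqref{RiemannRoch}, namely $\dim_\KK A_j=\binom{j+3}{3}-n\binom{j-t+3}{3}$, available at $j=m-1$ and $j=m$.

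With exact dimensions in hand, non-surjectivity reduces to the numerical inequality $\dim_\KK A_m\ge\dim_\KK A_{m-1}$: together with $\ker\mu_\ell\ne0$ this rules out an isomorphism, so $\mu_\ell$ is neither injective nor surjective. Subtracting the two instances of the dimension formula collapses the inequality to $\binom{m+2}{2}\ge n\binom{m-t+2}{2}$, which I would verify for the chosen $m=m(t)$ for all $t\ge140$ ($n=7$) and all $t\ge704$ ($n=8$). I expect the entire difficulty to live in this final step. At the critical degree one has $m-t\approx\frac{d}{2d-1}t$, and since $(3d-1)^2-nd^2=1$ for both relevant pairs $(n,d)=(7,3)$ and $(8,6)$, the leading $t^2$-coefficients of the two sides \emph{almost} cancel, differing only by the minuscule amount $\frac1{2(2d-1)^2}$. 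The difference is therefore positive to leading order but is dominated by the $O(t)$ terms — and by the ceiling in the definition of $m$ — until $t$ grows large; pinning down the exact onset requires a careful expansion of the binomials in $t$ rather than a soft leading-order estimate, and the thresholds $140$ and $704$ are precisely where this delicate comparison first holds for all larger $t$. Values of $t$ below these thresholds are simply not asserted, and the $n=5,6$ cases need no further argument, being exactly Lemmas~\ref{5forms} and~\ref{6forms}.
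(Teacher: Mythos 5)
Your proposal is correct and takes essentially the same route as the paper's own proof: $n=5,6$ are delegated to Lemmas~\ref{5forms} and~\ref{6forms}, and for $n=7,8$ the paper likewise takes the critical degree $m$ where the $(-1)$-curve of degree $d=3$ (resp.\ $d=6$) forces $h^1(D'_m)>0$ via Lemma~\ref{stronger}, gets non-injectivity from the sequence \eqref{les} with $h^1(D_m)=0$ from De~Volder--Laface, and concludes non-surjectivity from the non-negativity of the dimension difference (your reduction to $\binom{m+2}{2}\ge n\binom{m-t+2}{2}$) for $t\ge 140$, resp.\ $t\ge 704$. Your choice $m=\lceil\frac{(3d-1)(t-1)+2}{2d-1}\rceil$ is exactly the one implicit in the paper's residue tables (one checks $C\cdot D'_m=-2-\epsilon$ reproduces the $n=8$ table verbatim, and the $n=7$ table up to an off-by-one labeling of $t\bmod 5$), and the deferred residue-class verification does close at precisely the stated thresholds, the worst classes being $t\equiv 4\pmod 5$ (last failure $t=139$) and $t\equiv 10\pmod{11}$ (last failure $t=703$).
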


\begin{proof}
Lemma~\ref{5forms} and Lemma~\ref{6forms} take care of the cases $n=5,6$.
For $n=7$ or $8$, the same argument as used in Lemmas \ref{5forms} and 
\ref{6forms} shows that
as long as $t\geq3$ we have
\begin{equation}\label{npteqn}
\dim_{\KK}A_{m} - \dim_{\KK}A_{m-1} = \binom{m +3}{3}
-6\binom{m -t+3}{3}  - \binom{m +2}{3} +n\binom{m -t+2}{3}.
\end{equation}
and an exact sequence
\[
0 \longrightarrow H^1(D'_m) \longrightarrow A_{m-1} \longrightarrow A_m.
\]
But by Lemma \ref{stronger}, for $n=7$ we have
\[
C \cdot D'_m = \begin{cases}
-5 & \mbox{ if } t \mbox{ mod }5 = 0 \cr
-2 & \mbox{ if } t \mbox{ mod }5 = 1 \cr
-4 & \mbox{ if } t \mbox{ mod }5 = 2 \cr
-6 & \mbox{ if } t \mbox{ mod }5 = 3 \cr
-3 & \mbox{ if } t \mbox{ mod }5 = 4 \cr
\end{cases}
\]
where $C=3E_0'-2E'_1-E'_2-\cdots-E'_7$, hence $h^1(D'_m)>0$, and
for $n=8$ we have
\[
C \cdot D'_m = \begin{cases}
-6 & \mbox{ if } t \mbox{ mod }11 = 0 \cr
-11 & \mbox{ if } t \mbox{ mod }11 = 1 \cr
-5 & \mbox{ if } t \mbox{ mod }11 = 2 \cr
-10 & \mbox{ if } t \mbox{ mod }11 = 3 \cr
-4 & \mbox{ if } t \mbox{ mod }11 = 4 \cr
-9 & \mbox{ if } t \mbox{ mod }11 = 5 \cr
-3 & \mbox{ if } t \mbox{ mod }11 = 6 \cr
-8 & \mbox{ if } t \mbox{ mod }11 = 7 \cr
-2 & \mbox{ if } t \mbox{ mod }11 = 8 \cr
-7 & \mbox{ if } t \mbox{ mod }11 = 9 \cr
-12 & \mbox{ if } t \mbox{ mod }11 = 10 \cr
\end{cases}
\]
where $C=6E_0'-3E'_1-2E'_2-\cdots-2E'_8$, hence again $h^1(D'_m)>0$.
Since Equation \ref{npteqn} is non-negative for $t \ge 140$ when $n=7$
and for $t \ge 704$ when $n=8$, the result follows.
\end{proof}

\noindent WLP can hold for small values of $t$,
and individual examples are easy to check:

\begin{exm}\rm
Consider $I=\left\langle l_1^8,\ldots,l_8^8\right\rangle 
\subseteq\KK[x_1,\ldots,x_4]=S$
with $l_i \in S_1$ generic, and $\ell \in S_1$ such that $I\otimes 
S/(\ell)$ is
minimally generated by powers of eight generic linear forms. Let $A=S/I$, 
$S'=S/(\ell)$,
$I|_L=I\otimes_S S'$ and let the divisor associated via the inverse system 
corresponding to $(I|_L)_m$ be
\[
D'_m=mE'_0-(m-t+1)\sum_{i=1}^8 E'_i.
\]

For degrees $\ge 8$, the Hilbert function of $A$ is:
\vskip .10in
\begin{center}
\begin{supertabular}{|c|c|c|c|c|c|c|c|c|} 
\hline $i$ & $8$ &$9$ & $10$ &$11$ &$12$& $13$&$14$&$15$\\
\hline $HF(A,i)$ & $157$ & $188$  & $206$  & $204$  & $175$  & $112$  & 
$8$  & $0$ \\
\hline
\end{supertabular}
\end{center}
\vskip .10in
By Lemma \ref{Boundinj}
the maps $A_{m-1}\stackrel{\mu_\ell}\rightarrow A_m$
are injective for $1\leq m \leq 10$. For $m=11$,
$D'_{11}\cdot (6E'_0-\sum_{i=1}^7 2E'_i - 3E'_8)<-1$
hence $h^1(D'_{11})>0$; in fact $h^1(D'_{11})=2$ \cite{h2}, giving
$A_{10}\twoheadrightarrow A_{11}$. By Proposition~2.1 of \cite{MMN},
this gives surjectivity for $m \ge 11$, so $A$ has WLP.
\end{exm}

Since Conjecture~\ref{SHGH} holds
for eight or fewer points in general position in $\PP^2$, the analysis in 
this section can be carried out for powers of eight or fewer general forms in 
$\KK[x_1,\ldots,x_4]$ where
the powers differ. In \cite{cm}, Ciliberto-Miranda show that 
Conjecture~\ref{SHGH} holds
for points with uniform multiplicity $\le 12$.
However, there is no version of the De Volder-Laface result, so
even in the special case of powers of linear forms in four variables,
the study of WLP is closely linked to a difficult open problem on 
fatpoints in $\PP^2$.

\renewcommand{\thethm}{\thesection.\arabic{thm}}
\setcounter{thm}{0}

\section{Powers of $r+1$ linear forms in $\KK[x_1,\ldots,x_r]$}\label{sec:five}
We close by tackling the case of an almost complete intersection of powers
of linear forms (so $n=r+1$). For brevity, in this section we denote
\[
\begin{array}{ccc}
A_{r,t} & = & \KK[x_1,\ldots,x_r]/\langle l_1^t,\ldots,l_{r+1}^t \rangle \\
B_{r,t} & = & \KK[x_1,\ldots,x_r]/\langle l_1^t,\ldots,l_{r+2}^t \rangle \\
C_{r,t} & = & \KK[x_1,\ldots,x_r]/\langle l_1^t,\ldots,l_{r}^t \rangle, 
\end{array}
\]
where all forms are generic. The algebras $A,B,C$ are 
related by the long exact sequence
\begin{equation}\label{4termSeq}
0 \longrightarrow (I:\ell)/I \longrightarrow S/I \stackrel{\cdot \ell}{\longrightarrow} S(1)/I \longrightarrow S(1)/I+ \langle \ell \rangle  \longrightarrow 0.
\end{equation}

\renewcommand{\thethm}{\thesubsection.\arabic{thm}}
\setcounter{thm}{0}

\subsection{A key tool} 
We now recall a key tool in analyzing WLP for $A_{r,t}$. 
Following a suggestion of Iarrobino, Stanley interprets 
$C_{r,t}$ as the cohomology ring of a product of
projective spaces and applies the Lefschetz hyperplane theorem to show that
\begin{lem}\label{Tonylemma}$[$Lemma C of \cite{I2}$]$
Let $m = \mbox{min}\{\lfloor \frac{i}{t} \rfloor,r \}$. Then 
the Hilbert function of $A_{r,t}$ in degree $i$ is 
\[
\dim_{\KK} (A_{r,t})_i =
\begin{cases}
\binom{r-1+i}{r-1} + \sum\limits_{j=1}^m(-1)^j\binom{r-1+i-tj}{r-1} \cdot \binom{r+1}{j}    & \mbox{ if this quantity is positive,} \cr
0 & \mbox{ otherwise.}\cr
\end{cases}
\]
\end{lem}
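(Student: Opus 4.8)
The plan is to realize $C_{r,t}=\KK[x_1,\ldots,x_r]/\langle l_1^t,\ldots,l_r^t\rangle$ as a cohomology ring and then read off the Hilbert function of $A_{r,t}$ by an explicit inclusion-exclusion computation. Since the $l_i$ are generic, after a linear change of coordinates I may take $l_i=x_i$, so that $C_{r,t}=\KK[x_1,\ldots,x_r]/\langle x_1^t,\ldots,x_r^t\rangle$. This is a complete intersection, and it is the singular cohomology ring (with doubled grading) of a product $Y=\PP^{t-1}\times\cdots\times\PP^{t-1}$ of $r$ copies of projective space, since $H^*(\PP^{t-1})\cong\KK[x]/(x^t)$ and cohomology of a product is the tensor product of the factors. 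The point of routing through topology, following Iarrobino and Stanley, is that the hyperplane class on $Y$ gives a distinguished linear form whose multiplication maps are understood via the hard/weak Lefschetz theorem; this is what lets one pass from $C_{r,t}$ to the almost complete intersection $A_{r,t}$ obtained by adjoining one more generic power $l_{r+1}^t$.

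The key steps, in order, would be as follows. First, I would identify $C_{r,t}$ with $H^*(Y)$ and note that the Hilbert function of $C_{r,t}$ in degree $i$ is the coefficient of $z^i$ in $\left(\frac{1-z^t}{1-z}\right)^r$, i.e. the number of ways to write $i$ as an ordered sum of $r$ integers each in $[0,t-1]$. Second, I would pass to $A_{r,t}=C_{r,t}/\langle l_{r+1}^t\rangle$. Writing $\ell=l_{r+1}$, the quotient by $\ell^t$ is governed by multiplication by $\ell^t$, and because $\ell$ is generic it acts on $H^*(Y)$ as a Lefschetz element; the relevant multiplication maps $(C_{r,t})_{i-t}\stackrel{\cdot\ell^t}{\longrightarrow}(C_{r,t})_i$ are injective or surjective in the expected range, so the image has the expected dimension. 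This is exactly where the Lefschetz hyperplane theorem enters: it guarantees that $\dim_\KK(A_{r,t})_i=\max\{0,\dim_\KK(C_{r,t})_i-\dim_\KK(C_{r,t})_{i-t}\}$ once one knows the strong Lefschetz property for the complete intersection $C_{r,t}$ (Stanley's theorem, cited in the excerpt as \cite{Stan}). Third, I would convert the generating-function identity into the stated closed form: expanding $\left(\frac{1-z^t}{1-z}\right)^{r}$ and extracting coefficients by the binomial theorem yields $\dim_\KK(C_{r,t})_i=\sum_{j\ge0}(-1)^j\binom{r}{j}\binom{r-1+i-tj}{r-1}$, and subtracting the degree-$(i-t)$ value collapses the two sums into $\sum_{j=0}^m(-1)^j\binom{r+1}{j}\binom{r-1+i-tj}{r-1}$ via the Pascal identity $\binom{r}{j}+\binom{r}{j-1}=\binom{r+1}{j}$, with the truncation at $m=\min\{\lfloor i/t\rfloor,r\}$ forced by the vanishing $\binom{r-1+i-tj}{r-1}=0$ when $i-tj<0$.

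I expect the main obstacle to be the bookkeeping that turns the Lefschetz statement into the sharp positivity dichotomy in the formula. The subtlety is that $A_{r,t}$ has the \emph{strong} Lefschetz property inherited from $C_{r,t}$, so the dimension in degree $i$ is genuinely $\max\{0,\,\text{(difference of binomials)}\}$ rather than the difference itself; one must verify that the inner expression, once it becomes nonpositive as $i$ grows, stays nonpositive, so that the ``this quantity is positive'' clause correctly pins down where $A_{r,t}$ becomes zero. Establishing this unimodality-type behavior of the Hilbert function—or equivalently checking that the alternating binomial sum changes sign only once—is the delicate point; everything else is the routine algebra of combining the two truncated sums and re-indexing with Pascal's rule. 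The topological identification and the appeal to Stanley's strong Lefschetz result do the conceptual heavy lifting, and the remaining work is to confirm that the combinatorial expression faithfully records the resulting dimensions across the full range of $i$.
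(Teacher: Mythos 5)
Your route is exactly the one the paper relies on: the paper gives no proof of this lemma at all, citing it as Lemma C of \cite{I2} and noting only that Stanley realizes $C_{r,t}$ as the cohomology ring of a product of projective spaces and applies a Lefschetz theorem. Your outline fills in precisely that argument, and its core is correct: identify $C_{r,t}\cong H^*\bigl((\PP^{t-1})^{r}\bigr)$, use strong Lefschetz for a generic $\ell$ (Stanley proves it for $\ell=x_1+\cdots+x_r$; maximal rank is an open condition, so generic $\ell$ works, and note that taking $l_i=x_i$ for $i\le r$ is an exact change of coordinates, not a specialization, so no semicontinuity argument is needed), conclude $\dim_\KK(A_{r,t})_i=\max\{0,\,\dim_\KK(C_{r,t})_i-\dim_\KK(C_{r,t})_{i-t}\}$, and collapse the two truncated sums by Pascal's rule.

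Two corrections to your bookkeeping, however. First, the point you single out as the delicate one---verifying that the alternating sum changes sign only once---is not needed at all: since strong Lefschetz gives maximal rank of $\cdot\ell^t:(C_{r,t})_{i-t}\to(C_{r,t})_i$ \emph{in every degree}, the identity $\dim_\KK(A_{r,t})_i=\max\{0,\,f(i)\}$ holds pointwise, and the dichotomy ``the quantity if positive, else $0$'' is automatic with no unimodality statement required. Second, your justification of the truncation is off, and this is the one genuine subtlety: the vanishing $\binom{r-1+i-tj}{r-1}=0$ for $i-tj<0$ caps $j$ at $\lfloor i/t\rfloor$, while $\binom{r+1}{j}$ vanishes only for $j>r+1$, so Pascal's rule yields a sum running to $\min\{\lfloor i/t\rfloor,\,r+1\}$, not $\min\{\lfloor i/t\rfloor,\,r\}$. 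The cap at $r$ is harmless exactly when $i<t(r+1)$, since then $\lfloor i/t\rfloor\le r$ anyway; this covers every degree where $A_{r,t}\neq0$ (the socle degree is at most $r(t-1)<t(r+1)$) and every degree the paper uses. But for $i\ge t(r+1)$ the discarded $j=r+1$ term is $(-1)^{r+1}\binom{r-1+i-t(r+1)}{r-1}\neq0$, and since the full sum vanishes there, the truncated quantity equals $(-1)^{r}\binom{r-1+i-t(r+1)}{r-1}$, which is \emph{positive} for even $r$: e.g.\ $r=2$, $t=2$, $i=6$ gives $7-15+9=1$ while $\dim_\KK(A_{2,2})_6=0$. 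That defect sits in the statement as quoted, not in your approach; your write-up should either record the truncation as $\min\{\lfloor i/t\rfloor,\,r+1\}$ or restrict to $i<t(r+1)$, rather than claim the cap at $r$ is forced by the vanishing of the positional binomial.
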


\subsection{The case of $r$ even} 
We recall that the socle degree of $B_{r-1,t}$
is the largest degree $i$ such that $\dim_{\KK} (B_{r-1,t})_i>0$.
In Lemma~2 of \cite{DCI}, D'Cruz and Iarrobino prove 

\begin{lem}\label{Tonylemma2}
For $r-1$ odd, the socle degree of $B_{r-1,t}$ is $(t-1)\frac{r}{2}$.
\end{lem}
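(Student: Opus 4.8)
The plan is to reduce to the monomial complete intersection and exploit Stanley's strong Lefschetz theorem for the upper bound, then treat the lower bound by Gorenstein duality together with an induction that drops the number of variables by two. Write $s=r-1$ (so $s$ is odd), and abbreviate $B_{s,t}=\KK[x_1,\dots,x_s]/\langle l_1^t,\dots,l_{s+2}^t\rangle$, $C=C_{s,t}=\KK[x_1,\dots,x_s]/\langle l_1^t,\dots,l_s^t\rangle$, and $\ell_1=l_{s+1}$, $\ell_2=l_{s+2}$, so that $B_{s,t}=C/(\ell_1^t,\ell_2^t)$. Since $l_1,\dots,l_s$ are $s$ generic linear forms in $s$ variables they form a basis, so after a linear change of coordinates I may assume $C=\KK[x_1,\dots,x_s]/\langle x_1^t,\dots,x_s^t\rangle$ is the monomial complete intersection, with $\ell_1,\ell_2$ still generic. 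Let $p_i=\dim_\KK C_i$ be the coefficients of $(1+z+\dots+z^{t-1})^s$; this sequence is symmetric and unimodal about its center $\sigma/2$, where $\sigma=s(t-1)$ is the socle degree of the Artinian Gorenstein ring $C$. Set $d=(t-1)\tfrac{s+1}{2}$, the claimed socle degree; note $2d=\sigma+t-1$.

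For the upper bound I would first compute the socle degree of the almost complete intersection $A=A_{s,t}=C/(\ell_1^t)$. By Stanley's theorem $C$ has the strong Lefschetz property, so for generic $\ell_1$ the map $\times\ell_1^t\colon C_{j-t}\to C_j$ has maximal rank and hence $\dim_\KK A_j=\max(0,\,p_j-p_{j-t})$. Using the symmetry and unimodality of $(p_i)$ one checks that $p_j>p_{j-t}$ exactly when $j<(\sigma+t)/2=d+\tfrac12$, so the socle degree of $A$ is precisely $d$. Since $B_{s,t}=A/(\ell_2^t)$ is a quotient of $A$, it vanishes in every degree where $A$ does; hence the socle degree of $B_{s,t}$ is at most $d$.

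For the lower bound I must show $\dim_\KK (B_{s,t})_d>0$. Because $C$ is Artinian Gorenstein of socle degree $\sigma$, Macaulay duality identifies $(B_{s,t})_d=(C/(\ell_1^t,\ell_2^t))_d$ with the dual of $\bigl(0:_C(\ell_1^t,\ell_2^t)\bigr)_{\sigma-d}$, and $\sigma-d=(t-1)\tfrac{s-1}{2}$. Thus it suffices to produce a nonzero $x\in C_i$, with $i=(t-1)\tfrac{s-1}{2}$, satisfying $\ell_1^t x=\ell_2^t x=0$ in $C$. I would establish this by induction on the odd integer $s$, the step being $s\rightsquigarrow s+2$ and $i\rightsquigarrow i+(t-1)$; the base case $s=1$ is immediate, since then $i=0$ and $\ell_1^t=\ell_2^t=0$ already hold in $\KK[x_1]/(x_1^t)$. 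In the inductive step I would take a solution in the $s$-variable ring and multiply it by a carefully chosen degree-$(t-1)$ form in the two new variables (the higher-dimensional analogue of the power of the conic through five points in $\PP^2$ seen in Example~\ref{exP2}), arranged so that the product is annihilated by both $\ell_1^t$ and $\ell_2^t$.

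The hard part is exactly this lower bound, and it is the heart of the WLP-failure phenomenon: a dimension count shows that the kernels $(0:_C\ell_1^t)_i$ and $(0:_C\ell_2^t)_i$ each have dimension $p_i-p_{i-1}$, and for generic forms one would expect them to meet only in $0$, so the required element exists precisely because these two subspaces are \emph{special}. Controlling this speciality uniformly in $t$ is the crux, and this is where the oddness of $s$ enters, guaranteeing both that $i=(t-1)\tfrac{s-1}{2}$ is an integer and that the inductive construction is balanced. Rather than reprove it from scratch, I would invoke the explicit analysis of D'Cruz--Iarrobino \cite{DCI}, whose construction supplies such an annihilator and thereby pins the socle degree at $d=(t-1)\tfrac{r}{2}$.
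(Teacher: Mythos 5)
You should know at the outset that the paper does not prove Lemma \ref{Tonylemma2} at all: it is quoted verbatim from Lemma 2 of \cite{DCI}, so your concluding appeal to D'Cruz--Iarrobino lands on exactly the source the paper itself relies on, and anything you prove independently is strictly more than the paper offers. Your upper bound is correct and self-contained: after reducing to $C=\KK[x_1,\dots,x_s]/(x_1^t,\dots,x_s^t)$, Stanley's theorem \cite{Stan} gives maximal rank for $\cdot\,\ell_1^t$, so $\dim_\KK (C/\ell_1^tC)_j=\max(0,p_j-p_{j-t})$; by symmetry $p_{d-t}=p_{\sigma-d+t}=p_{d+1}$, and strict unimodality of the coefficients of $(1+z+\cdots+z^{t-1})^s$ pins the socle degree of the almost complete intersection at exactly $d=(t-1)\frac{s+1}{2}$, whence $SD(B_{s,t})\le d$ since $B_{s,t}$ is a further quotient. (This half also follows from Lemma \ref{Tonylemma}, which the paper already has.) Your duality reduction is likewise right: $(0:_CJ)_{\sigma-d}\cong((C/J)_d)^\vee$ for the Gorenstein ring $C$, with $\sigma-d=(t-1)\frac{s-1}{2}$, and your count $\dim_\KK(0:_C\ell_j^t)_i=p_i-p_{i-1}$ checks out since $\sigma-i-t=i-1$.

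The genuine gap is the lower bound, and your inductive sketch fails concretely, not merely for lack of detail. Write the two new variables as $u,v$ and $\ell_j=\ell_j'+a_ju+b_jv$. For a pure product $x=y\,g(u,v)$ with $\deg g=t-1$, expanding $\ell_j^tx=\sum_k\binom{t}{k}\bigl(\ell_j'^{\,t-k}y\bigr)\bigl((a_ju+b_jv)^kg\bigr)$ shows the inductive hypothesis kills only the $k=0$ term, so you need $(a_ju+b_jv)g\in\langle u^t,v^t\rangle$ for both $j=1,2$. Each such condition means $(a_ju+b_jv)g=\alpha u^t+\beta v^t$, and since $a_ju+b_jv$ divides $\alpha u^t+\beta v^t$ only for the unique ratio with $\alpha b_j^t+\beta(-a_j)^t=0$, each condition cuts out a single line in the $t$-dimensional space of degree-$(t-1)$ forms $g$; for generic $(a_1\!:\!b_1)\ne(a_2\!:\!b_2)$ these lines are distinct, so no product ansatz can be ``arranged'' to work. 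Nor can you strengthen the induction to demand $\ell_j'^{\,t-1}y=0$: the map $\cdot\,\ell'^{\,t-1}\colon C_i\to C_{i+t-1}$ is injective for generic $\ell'$ by SLP, since $p_{i+t-1}=p_{\sigma-i-t+1}=p_i$. Finally, genericity runs against any explicit special construction: $\dim_\KK(B_{s,t})_d$ is upper semicontinuous in the coefficients and attains its minimum at generic forms, so exhibiting a common annihilator for some specially positioned $\ell_1,\ell_2$ would prove nothing generically --- the construction must be carried out with generic coefficients throughout, and that is precisely the content of Lemma 2 of \cite{DCI}. As written, then, your proposal is a correct independent proof of $SD(B_{r-1,t})\le(t-1)\frac{r}{2}$ together with the same citation the paper uses for the whole statement; given that the table in Remark \ref{socleB4t} exposes an error in the companion Lemma 3 of \cite{DCI} (the even case), a genuinely independent proof of the lower bound would be valuable, but your sketch does not yet supply one.
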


\begin{thm}\label{FailR1} Let $k\geq 2$. Then
$A_{2k,t}$ fails to have WLP in degree $c=k(t-1)-1$ for all $t \gg 0$.
\end{thm}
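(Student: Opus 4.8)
The plan is to fix a generic linear form $\ell$ and show that the multiplication map $\mu_\ell\colon (A_{2k,t})_c\stackrel{\cdot\ell}{\longrightarrow}(A_{2k,t})_{c+1}$ is neither injective nor surjective, so that $A_{2k,t}$ has no Lefschetz element; here $c=k(t-1)-1$, $S=\KK[x_1,\ldots,x_{2k}]$ and $A_{2k,t}=S/I$. The first move is to identify the cokernel. Reducing modulo $\ell$, the images $\bar l_1,\ldots,\bar l_{2k+1}$ are $2k+1=(2k-1)+2$ generic linear forms in the $2k-1$ variables of $S/\langle\ell\rangle$, so $S/(I+\langle\ell\rangle)=B_{2k-1,t}$. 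Hence the four-term sequence \eqref{4termSeq} gives, in degree $c$,
\[
0\longrightarrow ((I:\ell)/I)_c\longrightarrow (A_{2k,t})_c\stackrel{\cdot\ell}{\longrightarrow}(A_{2k,t})_{c+1}\longrightarrow (B_{2k-1,t})_{c+1}\longrightarrow 0,
\]
so $\coker\mu_\ell=(B_{2k-1,t})_{c+1}$. Since $2k-1$ is odd, Lemma~\ref{Tonylemma2} says the socle degree of $B_{2k-1,t}$ is exactly $(t-1)k=c+1$; therefore $(B_{2k-1,t})_{c+1}\ne0$ and $\mu_\ell$ is not surjective.

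For non-injectivity I read the alternating-dimension identity off the same exact sequence:
\[
\dim_{\KK}\ker\mu_\ell=\dim_{\KK}(A_{2k,t})_c-\dim_{\KK}(A_{2k,t})_{c+1}+\dim_{\KK}(B_{2k-1,t})_{c+1}.
\]
As the last term is positive, it suffices to establish the Hilbert-function inequality $\dim_{\KK}(A_{2k,t})_c\ge\dim_{\KK}(A_{2k,t})_{c+1}$, for then $\ker\mu_\ell\ne0$ and WLP fails in degree $c$.

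Everything thus reduces to this inequality, which is the heart of the argument. For $t\gg0$ both $c$ and $c+1$ lie below the socle degree of $A_{2k,t}$, so Stanley's formula (Lemma~\ref{Tonylemma}) applies without its truncation clause; moreover $\lfloor c/t\rfloor=\lfloor(c+1)/t\rfloor=k-1$ once $t>k$. Subtracting the two instances of the formula and telescoping by $\binom{M}{2k-1}-\binom{M-1}{2k-1}=\binom{M-1}{2k-2}$ yields
\[
\dim_{\KK}(A_{2k,t})_{c+1}-\dim_{\KK}(A_{2k,t})_c=\sum_{j=0}^{k-1}(-1)^j\binom{2k+1}{j}\binom{2k-2+k(t-1)-tj}{2k-2}.
\]
Each summand is a polynomial in $t$ of degree $2k-2$ with leading coefficient $(-1)^j\binom{2k+1}{j}(k-j)^{2k-2}/(2k-2)!$, so for $t\gg0$ the sign of the difference is the sign of
\[
\Sigma=\sum_{j=0}^{k-1}(-1)^j\binom{2k+1}{j}(k-j)^{2k-2}.
\]

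The main obstacle is to prove $\Sigma<0$ for all $k\ge2$, which gives $\dim_{\KK}(A_{2k,t})_{c+1}<\dim_{\KK}(A_{2k,t})_c$ for $t\gg0$ and finishes the proof. The clean structural input is that $(k-x)^{2k-2}$ has degree $2k-2<2k+1$, so its $(2k+1)$-st finite difference vanishes, i.e. $\sum_{j=0}^{2k+1}(-1)^j\binom{2k+1}{j}(k-j)^{2k-2}=0$; splitting this vanishing sum at the (zero) term $j=k$ and reflecting $j\mapsto 2k+1-j$ reduces the sign of $\Sigma$ to a comparison of the two halves, which together with the direct checks $\Sigma=-1,-10,-245$ for $k=2,3,4$ I expect to push through to all $k\ge2$. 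Conceptually the hypothesis $k\ge2$ is sharp here: by the strong Lefschetz property of the complete intersection $C_{2k,t}$ (Stanley) one has $\dim_{\KK}(A_{2k,t})_i=\dim_{\KK}(C_{2k,t})_i-\dim_{\KK}(C_{2k,t})_{i-t}$, and the Hilbert function of $C_{2k,t}$ is the unimodal coefficient sequence of $(1+z+\cdots+z^{t-1})^{2k}$, whose standard deviation $\sim t\sqrt{k/6}$ exceeds $t/2$ exactly when $k\ge2$; this width estimate is what places the degrees $c,c+1$ on the descending branch of the Hilbert function of $A_{2k,t}$.
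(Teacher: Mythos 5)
Your global strategy is exactly the paper's: use the four-term sequence \eqref{4termSeq} for a generic $\ell$ to identify $\coker\mu_\ell$ with $(B_{2k-1,t})_{c+1}$, which is nonzero by Lemma~\ref{Tonylemma2} because $c+1=k(t-1)$ is precisely the socle degree of $B_{2k-1,t}$; then reduce failure of WLP in degree $c$ to the inequality $\dim_{\KK}(A_{2k,t})_c\ge\dim_{\KK}(A_{2k,t})_{c+1}$; then compute both sides by Lemma~\ref{Tonylemma} (with $m=k-1$ once $t>k$), telescope with Pascal, and observe that the difference is a polynomial in $t$ of degree $2k-2$ whose sign for $t\gg0$ is that of
\[
\Sigma=\sum_{j=0}^{k-1}(-1)^j\binom{2k+1}{j}(k-j)^{2k-2},
\]
which is $-\alpha$ in the paper's notation. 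All of this is correct and matches the published argument step for step.

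The genuine gap is at the one place where a real idea is required: proving $\Sigma<0$ (equivalently $\alpha>0$) for all $k\ge2$. Your finite-difference device does not do this. The vanishing of the $(2k+1)$-st difference of $(k-x)^{2k-2}$, after pairing $j$ with $2k+1-j$ (legitimate since $2k-2$ is even), yields only the self-identity $\Sigma=\sum_{j=0}^{k}(-1)^j\binom{2k+1}{j}(k+1-j)^{2k-2}$: the reflection makes the two halves \emph{equal}, so the proposed ``comparison of the two halves'' carries no sign information at all, and the checks $\Sigma=-1,-10,-245$ for $k=2,3,4$ together with ``I expect to push through'' are an explicit admission that the key inequality is unproved (the closing heuristic about the width of the coefficient sequence of $(1+z+\cdots+z^{t-1})^{2k}$ is likewise not a proof). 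The paper closes exactly this gap with the identity \eqref{Eulerian},
\[
\alpha=\genfrac{\langle}{\rangle}{0pt}{}{2k-2}{k-2}-\genfrac{\langle}{\rangle}{0pt}{}{2k-2}{k-3},
\]
a difference of consecutive Eulerian numbers, whose positivity follows since $\genfrac{\langle}{\rangle}{0pt}{}{n}{j}$ is increasing for $j\le n/2$. Note that your sum has $\binom{2k+1}{j}$ where the standard Worpitzky-type formula $\genfrac{\langle}{\rangle}{0pt}{}{n}{m}=\sum_{j}(-1)^j\binom{n+1}{j}(m+1-j)^n$ has $\binom{2k-1}{j}$, so even granting that formula one still needs the substitution $\binom{2k+1}{j}=\binom{2k-1}{j}+2\binom{2k-1}{j-1}+\binom{2k-1}{j-2}$ together with the symmetry $\genfrac{\langle}{\rangle}{0pt}{}{2k-2}{k-1}=\genfrac{\langle}{\rangle}{0pt}{}{2k-2}{k-2}$ to arrive at \eqref{Eulerian}; some such input (or another genuine monotonicity argument) must be supplied before your proof is complete.
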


\begin{proof}
By Lemma~\ref{Tonylemma2} we know $(B_{2k-1,t})_{c+1}\ne0$ and from \eqref{4termSeq}
we have the exact sequence 
\[
(A_{2k,t})_c \stackrel{\cdot \ell}{\longrightarrow} (A_{2k,t})_{c+1} \longrightarrow (B_{2k-1,t})_{c+1}  \longrightarrow 0,
\]
so WLP fails if
\[
\dim_{\KK}(A_{2k,t})_c \ge \dim_{\KK}(A_{2k,t})_{c+1}.
\]
For the relevant degrees $c$ and $c+1$, the upper limit $m$ in 
Lemma~\ref{Tonylemma} is 
\[
\begin{array}{ccc}
\mbox{For }c: m &= &\mbox{min}\{\lfloor \frac{k(t-1)-1}{t} \rfloor,2k \}\\
\mbox{For }c+1: m& =& \mbox{min}\{\lfloor \frac{k(t-1)}{t} \rfloor,2k \}
\end{array}
\]
If $t \ge k+1$, both $m$ values 
equal $k-1$, so 
by Lemma~\ref{Tonylemma} it suffices to show
\[
\begin{array}{ccc}
\binom{2k-1+c}{r-1} +\!\!\! \sum\limits_{1 \le j \le m}\!\!\! (-1)^j\binom{2k-1+c-tj}{2k-1} \cdot \binom{2k+1}{j} &\!\!\ge\!\! &\binom{2k+c}{2k-1} + \!\!\! \sum\limits_{1 \le j \le m}\!\!\! (-1)^j\binom{2k+c-tj}{2k-1} \cdot \binom{2k+1}{j} 
\end{array}
\]
Rearranging shows this inequality is equivalent to 
\[
\sum\limits_{j=0}^{k-1}(-1)^{j+1} \binom{2k-2 -k+(k-j)t}{2k-2} \cdot \binom{2k+1}{j} \ge 0.
\]
Expanding yields a polynomial of degree $2k-2$ in $t$, with lead
coefficient $\frac{\alpha}{(2k-2)!}$, where
\[
\alpha =\sum\limits_{j=0}^{k-1}(-1)^{j+1}(k-j)^{2k-2} \cdot \binom{2k+1}{j}.
\]
But $\alpha$ is the difference of two central Eulerian numbers
\begin{equation}\label{Eulerian}
\alpha =\genfrac{\langle}{\rangle}{0pt}{}{2k-2}{k-2}-\genfrac{\langle}{\rangle}{0pt}{}{2k-2}{k-3},
\end{equation}
so the positivity of $\alpha$ now follows, since the Eulerian numbers 
$\genfrac{\langle}{\rangle}{0pt}{}{n}{j}$
are increasing for $1\leq j\leq n/2$.
\end{proof}

\begin{exm}\rm
Theorem~\ref{FailR1} does not detect all obstructions to WLP. 
The Hilbert function of $A_{4,6}$ is 
\vskip .10in
\begin{center}
\begin{supertabular}{|c|c|c|c|c|c|c|c|c|c|c|c|c|c|} 
\hline $i$ & $0$ & $1$ &$2$ & $3$ &$4$ &$5$& $6$ & $7$ & $8$ & $9$ & $10$ & $11$ & $12$ \\
\hline $HF(A,i)$ & $1$ & $4$ & $10$  & $20$  & $35$  & $56$  & $79$  & $100$  & $115$ & $120$ & $111$ & $84$ & $45$ \\
\hline
\end{supertabular}
\end{center}
\vskip .10in
WLP fails for both $A_8\rightarrow A_9$ and $A_9\rightarrow A_{10}$
but only the latter failure is predicted by the theorem.
\end{exm}

\subsection{Gelfand-Tsetlin patterns} 

There is an interesting connection to combinatorics which we will apply in the next section.

\begin{defn}
A two-row Gelfand-Tsetlin pattern is a non-negative integer
$2 \times n$-matrix $(\lambda_{ij})$ that satisfies  
$\lambda_{2n} =0$, $\lambda_{1,j+1} \geq \lambda_{2,j}$ and $\lambda_{i,j} \geq
\lambda_{i,j+1}$ for $i =1,2 $ and $j=1,\ldots,n-1$. 
\end{defn}
In Proposition~3.6 of \cite{sturm}, Sturmfels-Xu show that 
for generic forms $l_i$, the Hilbert function of 
$\KK[x_1,\ldots,x_r]/\langle l_1^{u_1},\ldots,l_{r+1}^{u_{r+1}} \rangle$ in degree 
$i$ is the number of two-rowed Gelfand-Tsetlin patterns with $\lambda_{21} = i$ and $\lambda_{1j} +
\lambda_{2j} = u_j + \cdots + u_{r+1}$ for $j=1,\ldots,r+1$. 
\begin{cor}
Let $m = \mbox{min}\{\lfloor \frac{i}{t} \rfloor,r \}$. 
The number of two-rowed Gelfand-Tsetlin patterns with 
$\lambda_{21} = i$ and $\lambda_{1j} + \lambda_{2j} = (r+2-j)t$ for $j=1,\ldots,r+1$
is 
\[
\begin{cases}
\binom{r-1+i}{r-1} + \sum\limits_{1 \le j \le m}(-1)^j\binom{r-1+i-tj}{r-1} \cdot \binom{r+1}{j}    & \mbox{ if this quantity is positive.} \cr
0 & \mbox{ otherwise.}\cr
\end{cases}
\] 
\end{cor}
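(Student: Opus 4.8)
The plan is to recognize this Corollary as the direct combination of two results already available in the excerpt: both sides of the asserted equality compute the single quantity $\dim_{\KK}(A_{r,t})_i$, and the Corollary simply equates these two descriptions. First I would specialize the Sturmfels-Xu count (Proposition 3.6 of \cite{sturm}) to uniform exponents, taking $u_1=\cdots=u_{r+1}=t$. With this choice the algebra $\KK[x_1,\ldots,x_r]/\langle l_1^{u_1},\ldots,l_{r+1}^{u_{r+1}}\rangle$ is exactly $A_{r,t}$, so Sturmfels-Xu identifies $\dim_{\KK}(A_{r,t})_i$ with the number of two-rowed Gelfand-Tsetlin patterns satisfying $\lambda_{21}=i$ and $\lambda_{1j}+\lambda_{2j}=u_j+\cdots+u_{r+1}$ for $j=1,\ldots,r+1$.

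The one piece of bookkeeping to verify is that this boundary condition collapses to the form stated in the Corollary. The sum $u_j+\cdots+u_{r+1}$ runs over the indices $j,j+1,\ldots,r+1$, hence has $(r+1)-j+1=r+2-j$ terms, each equal to $t$, so it equals $(r+2-j)t$. Thus the patterns counted by Sturmfels-Xu are precisely those with $\lambda_{21}=i$ and $\lambda_{1j}+\lambda_{2j}=(r+2-j)t$, i.e. exactly the patterns named in the Corollary.

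Finally I would invoke Lemma~\ref{Tonylemma} (Lemma C of \cite{I2}), which supplies the closed form for $\dim_{\KK}(A_{r,t})_i$ as the displayed piecewise binomial expression with $m=\min\{\lfloor i/t\rfloor,r\}$; equating the two computations of $\dim_{\KK}(A_{r,t})_i$ then yields the Corollary, and the ``positive / $0$ otherwise'' dichotomy carries over verbatim from Lemma~\ref{Tonylemma}, since a Hilbert function is automatically non-negative and the binomial sum is set to $0$ exactly when it fails to be positive. There is no genuine obstacle here, as the argument is a matching of two cited theorems; the only points requiring care are the index count above and the confirmation that both sources use the same normalization of the degree variable, namely $\lambda_{21}=i$, so that the two Hilbert-function descriptions refer to the same graded piece.
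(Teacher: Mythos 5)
Your proposal is correct and is exactly the paper's argument: the paper's entire proof is the one-line observation that the Corollary follows from the Sturmfels--Xu count specialized to $u_1=\cdots=u_{r+1}=t$ (so that $u_j+\cdots+u_{r+1}=(r+2-j)t$) combined with Lemma~\ref{Tonylemma}, and your write-up merely makes the same bookkeeping explicit.
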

\begin{proof}
This follows from the result of Sturmfels-Xu and Lemma~\ref{Tonylemma}.
\end{proof}

\subsection{The case of $r=2k+1$ odd}
Let $SD(A)$ denote the socle degree of
an Artinian algebra $A$. No formula for $SD(B_{2k,t})$ 
analogous to that of Lemma~\ref{Tonylemma2} is known.
However, we can
still obtain some partial results on WLP for $A_{2k+1,t}$
by applying results from \cite{sturm}.

\begin{rmk}\label{socleB4t}\rm
The socle degree of $B_{4,t}$ up to $t=14$ is:
\vskip .10in
\begin{center}
\begin{supertabular}{|c|c|c|c|c|c|c|c|c|c|c|c|c|c|} 
\hline $t$ & $2$ & $3$ &$4$ &$5$& $6$ & $7$ & $8$ & $9$ & $10$ & $11$ & $12$ & $
13$ & $14$\\
\hline $SD(B_{4,t})$ & $2$ & $4$ & $7$  & $9$  & $12$  & $14$  & $16$  & $19$  &
 $21$ & $24$ & $26$ & $28$ & $31$ \\
\hline
\end{supertabular}
\end{center}
\vskip .10in
\end{rmk}
\noindent Lemma~3 of \cite{DCI} asserts that
$SD(B_{2k,t})=(t-1)k$ but the proof shows only that
\begin{equation}\label{DCIeqn}
(t-1)k \le SD(B_{2k,t}) \le (t-1)(k+1);
\end{equation}
the table above shows the assertion of the lemma is incorrect for $4\leq t\leq 14$. 

\begin{lem}\label{GTodd}
If $c = (t-1)(k+1)-1$ and $t>2k+2$, then 
\[\dim_{\KK}(A_{2k+1,t})_{c} \ge \dim_{\KK}(A_{2k+1,t})_{c+1}.
\]
\end{lem}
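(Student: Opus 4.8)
The plan is to bypass the combinatorics of Gelfand--Tsetlin patterns and argue directly from the explicit Hilbert function of Lemma~\ref{Tonylemma}, reducing the desired inequality to the unimodality of a complete intersection Hilbert function. First, observe that the formula of Lemma~\ref{Tonylemma} is, at the degrees in question (both well below $(2k+2)t$), exactly the truncation $\dim_{\KK}(A_{2k+1,t})_i=\max\{0,a_i\}$, where
\[
a_i:=[q^i]\,\frac{(1-q^t)^{2k+2}}{(1-q)^{2k+1}}=g_i-g_{i-t},\qquad g(q)=(1+q+\cdots+q^{t-1})^{2k+1}
\]
is the naive (untruncated) coefficient and $g_i=[q^i]g(q)$. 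Since $x\mapsto\max\{0,x\}$ is nondecreasing, it suffices to prove the sharper untruncated inequality $a_c\ge a_{c+1}$; this also frees us from tracking where the Hilbert function vanishes.

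Next I would convert the difference $a_c-a_{c+1}$ into a centered second difference. Multiplying the generating series by $(1-q)$ shifts the coefficient sequence, giving
\[
a_c-a_{c+1}=-[q^{c+1}]\Bigl((1-q^t)^2 f(q)\Bigr)=-\bigl(b_{c+1}-2b_{c+1-t}+b_{c+1-2t}\bigr),
\]
where $f(q)=\bigl(\tfrac{1-q^t}{1-q}\bigr)^{2k}=(1+q+\cdots+q^{t-1})^{2k}$ is the Hilbert series of the complete intersection $C_{2k,t}$ and $b_j=[q^j]f(q)$. The polynomial $f$ has degree $2k(t-1)$; write $M=k(t-1)$ for its midpoint. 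Substituting $c+1=(k+1)(t-1)$ one checks the three indices are $c+1=M+(t-1)$, $c+1-t=M-1$, and $c+1-2t=M-(t+1)$. Thus $a_c\ge a_{c+1}$ is equivalent to
\[
b_{M+(t-1)}-2b_{M-1}+b_{M-(t+1)}\le 0 .
\]

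Finally I would invoke the two structural properties of $f$. Being the Hilbert series of an Artinian complete intersection, $b$ is palindromic about $M$ (Gorenstein symmetry), and by Stanley's theorem \cite{Stan} that such complete intersections have the strong Lefschetz property, $b$ is unimodal with peak at $M$. Symmetry gives $b_{M+(t-1)}=b_{M-(t-1)}$, so it remains to show $b_{M-(t-1)}+b_{M-(t+1)}\le 2b_{M-1}$. But $M-(t-1)$ and $M-(t+1)$ both lie at or below $M-1\le M$, so unimodality on the increasing branch yields $b_{M-(t-1)}\le b_{M-1}$ and $b_{M-(t+1)}\le b_{M-1}$ (interpreting $b_j=0$ for $j<0$). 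Adding these gives the inequality, hence $a_c\ge a_{c+1}$ and therefore $\dim_{\KK}(A_{2k+1,t})_c\ge\dim_{\KK}(A_{2k+1,t})_{c+1}$.

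The only genuinely delicate point is the index bookkeeping in the middle step: the degree $c=(k+1)(t-1)-1$ is chosen precisely so that $c+1-t$ lands one slot below the peak $M$ of $f$, with the remaining two indices straddling it symmetrically enough for unimodality to apply. Once this is seen the estimate is immediate, and in fact it holds for all $t\ge 2$ and $k\ge 1$; the stronger hypothesis $t>2k+2$ is therefore not needed for this inequality and is presumably imposed to guarantee that $c+1$ does not exceed the socle degree (cf.\ \eqref{DCIeqn}), so that the accompanying failure of WLP is meaningful. An alternative route, more in the spirit of the preceding subsection, would seek an explicit injection of the two-row Gelfand--Tsetlin patterns with $\lambda_{21}=c+1$ into those with $\lambda_{21}=c$ afforded by the Sturmfels--Xu count; the unimodality argument above, however, seems both shorter and more transparent.
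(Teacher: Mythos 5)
Your argument is correct, but it is not the paper's: there Lemma~\ref{GTodd} is proved combinatorially, by taking the Sturmfels--Xu interpretation \cite{sturm} of $\dim_{\KK}(A_{2k+1,t})_i$ as counting two-row Gelfand--Tsetlin patterns (the set $G_i$) and exhibiting an explicit injection $G_{c+1}\hookrightarrow G_c$ that moves one unit of the first column from the bottom entry to the top; the hypothesis $t>2k+2$ enters exactly once, to show that no pattern in $G_{c+1}$ has $\lambda_{22}=c+1$, so the modified filling still satisfies the interlacing conditions. You instead work from Lemma~\ref{Tonylemma}, and your reduction is sound: at $i=c,c+1$ one has $i<(2k+2)t$, so the truncated sum there is the full coefficient $a_i$ of $(1-q^t)^{2k+2}/(1-q)^{2k+1}$, passing to $\max\{0,\cdot\}$ is harmless by monotonicity, and the index bookkeeping $c+1=M+(t-1)$, $c+1-t=M-1$, $c+1-2t=M-(t+1)$ with $M=k(t-1)$ checks out, reducing everything to $b_{M-(t-1)}+b_{M-(t+1)}\le 2b_{M-1}$, which follows from the symmetry and unimodality of $f=(1+q+\cdots+q^{t-1})^{2k}$ (Stanley's strong Lefschetz result \cite{Stan}, though the elementary fact that a product of symmetric unimodal nonnegative polynomials is symmetric unimodal already suffices); note that the symmetry step $b_{M+(t-1)}=b_{M-(t-1)}$ uses $k\ge1$ so that $M+(t-1)\le 2M$. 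The trade-off: your route proves the strictly stronger statement that the inequality holds for all $t\ge2$ and $k\ge1$, exposing $t>2k+2$ as an artifact of the paper's injection (it would then matter in Proposition~\ref{sect5prop}(b) only through the socle-degree condition, cf.\ \eqref{DCIeqn}), and it runs parallel to the even case, where Theorem~\ref{FailR1} likewise argues from Lemma~\ref{Tonylemma}; the paper's injection, on the other hand, needs neither Lemma~\ref{Tonylemma} nor any unimodality input, and stays inside the Gelfand--Tsetlin framework that this section is specifically advertising.
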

\begin{proof}
Let $G_i$ denote the set of Gelfand-Tsetlin patterns with 
$\lambda_{21} = i$ and $\lambda_{1j} +\lambda_{2j} = (2k+1+2-j)t$ for $j=1,\ldots,2k+1+1$. 
We will exhibit an injective map $G_{c+1}\rightarrow G_{c}$.
To do this, note that there is no pattern in $G_{c+1}$ with $\lambda_{22}=c+1$,
as this would imply $\lambda_{12}=(2k+1)t-c-1$. Since $\lambda_{12}\geq \lambda_{22}$ 
this yields $(2k+1)t-c-1 \geq c+1$, so $(2k+1)t-2 \geq 2c= 2[(t-1)(k+1)-1]$ and so $2k+2 \geq t$, 
a contradiction. 

Define a map $G_{c+1} \rightarrow G_c$ by sending 
$\Lambda \in G_{c+1}$ to the pattern obtained by replacing the first column
of $\Lambda$ (given by $\lambda_{11}=(2k+3)t-c-1, \lambda_{12}=c+1$) 
with $\lambda'_{11}=(2k+3)t-c, \lambda'_{12}=c$. This new filling is 
still a Gelfand-Tsetlin pattern since we have shown that 
$\lambda_{22}\leq c$, therefore the map is an injection of $G_{c+1}$ into $G_c$.
\end{proof}

We now have:

\begin{prop}\label{sect5prop}
For $A_{2k+1, 2l+1}$, with $l$ possibly a half integer, then WLP fails for
the map $(A_{2k+1,2l+1})_c \to (A_{2k+1,2l+1})_{c+1}$ if
\begin{itemize}
\item[(a)] for $c+1=2kl$ we have
$$\dim_{\KK}(A_{2k+1,2l+1})_c + \dim_{\KK}(B_{2k,2l+1})_{c+1} > \dim_{\KK}(A_{2k+1,2l+1})_{c+1},$$
or if
\item[(b)] $2l+1 > 2k+2$ and $SD(B_{2k,2l+1}) = c+1$ for $c+1=2l(k+1)$.
\end{itemize}
\end{prop}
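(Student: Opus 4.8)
The plan is to prove both parts by exploiting the four-term exact sequence \eqref{4termSeq}, which in the relevant graded pieces gives
\[
(A_{2k+1,2l+1})_c \stackrel{\cdot \ell}{\longrightarrow} (A_{2k+1,2l+1})_{c+1} \longrightarrow (B_{2k,2l+1})_{c+1} \longrightarrow 0.
\]
The cokernel of the multiplication map $\mu_\ell$ in degree $c$ is exactly $(B_{2k,2l+1})_{c+1}$, so if this is nonzero then $\mu_\ell$ fails to be surjective in degree $c$. To conclude that WLP fails, I must show that $\mu_\ell$ is also not injective in degree $c$, i.e. that the multiplication map cannot have full rank. The standard device is a dimension count: from the exactness above, the rank of $\mu_\ell$ is $\dim_\KK(A_{2k+1,2l+1})_{c+1} - \dim_\KK(B_{2k,2l+1})_{c+1}$, so $\mu_\ell$ fails to be injective precisely when this rank is strictly less than $\dim_\KK(A_{2k+1,2l+1})_c$, that is, when
\[
\dim_\KK(A_{2k+1,2l+1})_c + \dim_\KK(B_{2k,2l+1})_{c+1} > \dim_\KK(A_{2k+1,2l+1})_{c+1}.
\]

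For part (a) this is immediate: the displayed hypothesis is exactly the inequality above, so $\mu_\ell$ in degree $c$ is neither injective (by the strict inequality just derived) nor surjective (since $(B_{2k,2l+1})_{c+1}\neq 0$, as its dimension is at least the positive difference). Hence WLP fails. The only point to verify is that the hypothesis forces $(B_{2k,2l+1})_{c+1}>0$, which follows since the left side exceeds $\dim_\KK(A_{2k+1,2l+1})_{c+1}$ and the $A$-term alone cannot, so the $B$-term must be positive.

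For part (b) the strategy is to reduce to the situation of Lemma~\ref{GTodd}. With $2l+1$ playing the role of $t$ and $c+1 = 2l(k+1) = (t-1)(k+1)$, the hypothesis $2l+1 > 2k+2$ is precisely the condition $t > 2k+2$ of that lemma, so I may invoke it to obtain $\dim_\KK(A_{2k+1,2l+1})_c \geq \dim_\KK(A_{2k+1,2l+1})_{c+1}$. Combined with the hypothesis $SD(B_{2k,2l+1}) = c+1$, which guarantees $(B_{2k,2l+1})_{c+1} \neq 0$, the same exact sequence shows that $\mu_\ell$ is not surjective, while the inequality from Lemma~\ref{GTodd} (strict after accounting for the nonzero cokernel) shows it cannot be injective either. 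Concretely, surjectivity would force $\dim_\KK(A_{2k+1,2l+1})_c \geq \dim_\KK(A_{2k+1,2l+1})_{c+1}$ with the cokernel zero, contradicting $(B_{2k,2l+1})_{c+1}\neq 0$; injectivity would force the rank to equal $\dim_\KK(A_{2k+1,2l+1})_c$, contradicting the Gelfand--Tsetlin inequality together with the positive cokernel. Either way WLP fails.

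The main obstacle I anticipate is bookkeeping with the index shift in part (b): one must confirm that the degree $c+1 = 2l(k+1)$ used here matches the hypothesis $c = (t-1)(k+1)-1$ of Lemma~\ref{GTodd} under the substitution $t = 2l+1$, and that the socle-degree condition $SD(B_{2k,2l+1})=c+1$ is consistent with the two-sided bound \eqref{DCIeqn}, namely $(t-1)k \le SD(B_{2k,t}) \le (t-1)(k+1)$, so that $c+1 = (t-1)(k+1)$ sits at the upper end of the allowed range. No genuinely new estimate is needed beyond Lemma~\ref{GTodd} and the exact sequence; the content is the careful translation between the $A/B$ dimension language and the cokernel-versus-kernel criterion for failure of full rank.
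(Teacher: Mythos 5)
Your part (b) is correct and coincides with the paper's argument: under the substitution $t=2l+1$ the hypotheses $2l+1>2k+2$ and $c+1=2l(k+1)=(t-1)(k+1)$ match Lemma~\ref{GTodd} exactly as you verify, the socle-degree hypothesis gives non-surjectivity via \eqref{4termSeq}, and the Gelfand--Tsetlin inequality then rules out injectivity by the rank count. Your rank computation, $\mathrm{rank}\,\mu_\ell=\dim_\KK(A_{2k+1,2l+1})_{c+1}-\dim_\KK(B_{2k,2l+1})_{c+1}$, also correctly derives non-injectivity in part (a) from the stated inequality.

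However, part (a) has a genuine gap at the non-surjectivity step. You claim $(B_{2k,2l+1})_{c+1}\neq 0$ follows from the hypothesis because ``the $A$-term alone cannot'' exceed $\dim_\KK(A_{2k+1,2l+1})_{c+1}$; this tacitly asserts $\dim_\KK(A_{2k+1,2l+1})_c \le \dim_\KK(A_{2k+1,2l+1})_{c+1}$, which is unproved and in fact not available. The paper's own later application of this proposition (the subsection on almost complete intersections with $t=2$, where $l=1/2$ and $c+1=k$) deduces failure of WLP precisely from $\dim_\KK(A_{r,2})_{k-1}\geq \dim_\KK(A_{r,2})_{k}$, which its numerical evidence indicates holds for odd $r\geq 15$ --- so the $A$-term alone can account for the inequality. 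If one only knew the inequality and $(B_{2k,2l+1})_{c+1}$ happened to vanish, the map would be surjective and WLP would \emph{not} fail at this spot; so positivity of $(B_{2k,2l+1})_{c+1}$ is an independent input, not a consequence of the hypothesis. The paper supplies it from \eqref{DCIeqn}: for $t=2l+1$ one has $SD(B_{2k,2l+1})\geq (t-1)k=2kl=c+1$, and since $B_{2k,2l+1}$ is a standard graded Artinian algebra its Hilbert function is positive in every degree up to the socle degree, whence $(B_{2k,2l+1})_{c+1}\neq 0$ and $\mu_\ell$ is not surjective. Inserting this one citation repairs your proof of (a), after which your argument agrees with the paper's.
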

\begin{proof} By \eqref{DCIeqn}, the socle degree of $B_{2k,2l+1}$ is at least $c+1=2kl=k(t-1)$ where $t=2l+1$,
so by \eqref{4termSeq} the map is not surjective, while if 
the stated inequality holds, then the map 
cannot by dimension considerations be injective, which proves (a).
Similarly, if the socle degree of
$B_{2k,2l+1}$ is $2l(k+1)=(k+1)(t-1)$ for $t=2l+1$, then surjectivity
fails so Lemma~\ref{GTodd} implies injectivity fails too, which proves (b).
\end{proof}

In order to apply Proposition \ref{sect5prop}(a), we will need to be able to compute
the dimension of $B_{2k,2l+1}$ in degree $c+1=2kl$. In Theorem~7.2 of \cite{sturm},
Sturmfels-Xu use the Verlinde formula to show that for generic linear 
forms $l_i$, the Hilbert function of $B_{s,2l+1}$ in degree $i=sl$ is
\begin{equation}\label{Verlinde}
\dim_{\KK}(B_{s,2l+1})_i=\frac{1}{2l+1} \sum_{j=0}^{2l} (-1)^{sj}
\bigl({\rm sin} \frac{2j+1}{4l+2} \pi \bigr)^{-s}.
\end{equation}
Here $l$ can be a half-integer if $s$ is even
but must be an integer if $s$ is odd.
In particular, the Verlinde formula gives the
Hilbert function of $B_{2k,2l+1}$ in degree $sl = 2k \cdot \frac{t-1}{2} = k(t-1)$.

When $l=1/2$ and $i=\lceil s/2\rceil$, the dimension of $(B_{s,2l+1})_i$ takes a particularly simple form:
\begin{equation}\label{decruz}
\dim_{\KK}(B_{s,2})_i=
\begin{cases}
2^i & \mbox{ if } s \mbox{ is even and } i=\frac{s}{2},\cr
1& \mbox{ if } s \mbox{ is odd and }i=\frac{s+1}{2}.\cr
\end{cases}
\end{equation}
This was conjectured by D'Cruz and Iarrobino in \cite{DCI}, 
and proved by
Sturmfels and Xu in \cite[Corollaries 7.3, 7.4]{sturm}.

\subsection{Almost complete intersections with $t=2$}
We close by studying almost complete intersections of squares of linear forms.
For example, by applying the results above we have:

\begin{exm}\label{FailQuadrics}\rm
For $B_{7,2}$, $SD(B_{7,2}) = 4$ by Lemma \ref{Tonylemma2} (and the socle dimension is $1$,
but we don't need the specific dimension in this case), while
$\dim_{\KK}(A_{8,2})_3 = 48$ and $\dim_{\KK}(A_{8,2})_4 = 42$
by Lemma \ref{Tonylemma} so WLP fails by Theorem \ref{FailR1}.
For $B_{8,2}$, $\dim_{\KK}(B_{8,2})_4=16$ from \eqref{Verlinde} 
(and $SD(B_{8,2}) = 4$ but we don't need the specific socle degree in this case), while
$\dim_{\KK}(A_{9,2})_3 = 75$ and $\dim_{\KK}(A_{9,2})_4 = 90$
by Lemma \ref{Tonylemma} so WLP fails by Proposition \ref{sect5prop}.
\end{exm}

More generally, consider the map $(A_{r,2})_{k-1}\to (A_{r,2})_{k}$ where $r$ is either $2k$ or $2k+1$.
By \eqref{DCIeqn}, the socle degree of $B_{r-1,2}$ is at least $k$, so from 
Theorem \ref{FailR1} (if $r=2k$ is even) or from Proposition \ref{sect5prop}(a)
(if $r=2k+1$ is odd), we see WLP fails if
$\dim_\KK(A_{r,2})_{k-1}\geq\dim_\KK(A_{r,2})_{k}$.
Using Lemma \ref{Tonylemma} we can check this for any specific value of $k$;
numerical experiments suggest this holds for $r\geq 15$ if r is odd and for $r\geq 6$ if $r$ is even.
If in fact $r=2k+1$ is odd, then by Proposition \ref{sect5prop}(a)
and \eqref{decruz} it is enough to show
$\dim_\KK(A_{r,2})_{k-1}+2^k>\dim_KK(A_{r,2})_{k}$.
Numerical experiments suggest this holds for odd $r\geq 9$.
This leads us to make the following conjecture.

\begin{conj}
For $A_{r,2}$, WLP fails for $r=6$ and all $r\geq 8$.
\end{conj}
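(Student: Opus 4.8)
The plan is to convert the two numerical criteria assembled just above the conjecture into closed-form inequalities, by first simplifying the Hilbert function of $A_{r,2}$ and then reducing each criterion to an elementary second-difference computation. Write $r=2k$ in the even case and $r=2k+1$ in the odd case; in both I study the single multiplication map $(A_{r,2})_{k-1}\to(A_{r,2})_k$.

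First I would simplify Lemma~\ref{Tonylemma}. For $t=2$ and $i\le k<r$ the cutoff is $m=\lfloor i/2\rfloor$, so the alternating sum there is exactly the coefficient of $z^i$ in
\[
\frac{(1-z^2)^{r+1}}{(1-z)^r}=(1-z)(1+z)^{r+1}.
\]
Hence, wherever the value is positive,
\[
\dim_{\KK}(A_{r,2})_i=\binom{r+1}{i}-\binom{r+1}{i-1},
\]
and positivity holds for $i\le(r+1)/2$, covering $i\in\{k-1,k\}$ in both parities (the truncation to $m=\min\{\lfloor i/2\rfloor,r\}$ is harmless since $\lfloor i/2\rfloor<r$).

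For $r=2k$, Lemma~\ref{Tonylemma2} gives $SD(B_{2k-1,2})=k$, so $(B_{2k-1,2})_k\ne0$ and the exact sequence~\eqref{4termSeq} (exactly as in the proof of Theorem~\ref{FailR1}) shows $(A_{2k,2})_{k-1}\to(A_{2k,2})_k$ is not surjective; it then fails to have full rank once $\dim_{\KK}(A_{2k,2})_{k-1}\ge\dim_{\KK}(A_{2k,2})_k$. With the closed form this becomes
\[
\binom{2k+1}{k}-2\binom{2k+1}{k-1}+\binom{2k+1}{k-2}\le0,
\]
and collecting the ratios of consecutive binomial coefficients rewrites the left side as $\binom{2k+1}{k}\cdot\frac{6-2k}{(k+2)(k+3)}$, which is $\le0$ precisely for $k\ge3$ (with equality at $k=3$, where non-surjectivity still forces failure). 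Thus WLP fails for every even $r\ge6$.

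For $r=2k+1$, Proposition~\ref{sect5prop}(a) combined with~\eqref{decruz} (which supplies $\dim_{\KK}(B_{2k,2})_k=2^k$) reduces failure to
\[
2^k>\binom{2k+2}{k}-2\binom{2k+2}{k-1}+\binom{2k+2}{k-2},
\]
and the same manipulation identifies the right side as $\binom{2k+2}{k}\cdot\frac{12-2k}{(k+3)(k+4)}$. This is $\le0$ for $k\ge6$, making the inequality automatic; the finitely many cases $k=4,5$ are checked directly ($16>15$ and $32>22$), while $k=1,2,3$ fail the test, matching the absence of $r=3,5,7$ from the conjecture. Hence WLP fails for odd $r\ge9$, and together with the even case this yields failure for $r=6$ and all $r\ge8$. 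The main obstacle is not any single estimate but the initial reduction: one must verify that the closed form $\binom{r+1}{i}-\binom{r+1}{i-1}$ genuinely governs $\dim_{\KK}(A_{r,2})_i$ in exactly the degrees $i\in\{k-1,k\}$ (positivity and absence of truncation), after which the conjecture collapses to the two sign computations above together with the handful of small cases.
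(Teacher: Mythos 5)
Your proposal is correct, and you should know that it actually goes beyond the paper: the statement you proved is left there as a \emph{conjecture}, with no proof given at all. The authors only assemble the two sufficiency criteria in the paragraph preceding it (non-surjectivity of $(A_{r,2})_{k-1}\to(A_{r,2})_k$ coming from the socle of $B_{r-1,2}$ via \eqref{4termSeq}, plus a dimension inequality which they verify by ``numerical experiments''), and they cite \cite{MMRN2} for the even case, proved later by others. You follow exactly the route the paper sketches but then close it in full generality. The new ingredient is your simplification of Lemma~\ref{Tonylemma} at $t=2$: since $(1-z^2)^{r+1}/(1-z)^r=(1-z)(1+z)^{r+1}$ and the truncation at $m=\min\{\lfloor i/2\rfloor,r\}$ is vacuous in the degrees $i\in\{k-1,k\}$, one gets $\dim_{\KK}(A_{r,2})_i=\binom{r+1}{i}-\binom{r+1}{i-1}$ there, with the required positivity; this reproduces the paper's data points ($\dim_{\KK}(A_{9,2})_3=75$, $\dim_{\KK}(A_{9,2})_4=90$ in Example~\ref{FailQuadrics}, and $\dim_{\KK}(A_{8,2})_3=48$, $\dim_{\KK}(A_{8,2})_4=42$). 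I verified both of your second-difference identities,
\begin{gather*}
\binom{2k+1}{k}-2\binom{2k+1}{k-1}+\binom{2k+1}{k-2}=\binom{2k+1}{k}\cdot\frac{6-2k}{(k+2)(k+3)},\\
\binom{2k+2}{k}-2\binom{2k+2}{k-1}+\binom{2k+2}{k-2}=\binom{2k+2}{k}\cdot\frac{12-2k}{(k+3)(k+4)},
\end{gather*}
as well as the small cases $16>15$ ($k=4$) and $32>22$ ($k=5$). Two details you handled well deserve emphasis. First, in the even case you correctly rerun the \emph{mechanism} of Theorem~\ref{FailR1} rather than citing its statement, which assumes $t\gg0$ and whose proof uses $t\ge k+1$, false at $t=2$; Lemma~\ref{Tonylemma2} still gives $SD(B_{2k-1,2})=k$, hence non-surjectivity, and your inequality (with equality at $k=3$, i.e.\ $r=6$, where non-surjectivity alone forces non-full-rank) finishes it. You also avoid the even-variable socle formula of \cite{DCI} that the paper shows is incorrect, relying only on \eqref{DCIeqn}, Proposition~\ref{sect5prop}(a) and \eqref{decruz}. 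Second, like the paper, you conclude failure of WLP from the behaviour of a generic $\ell$; this is legitimate by semicontinuity of the rank of $\mu_\ell$, but it is worth stating once explicitly. Granting the quoted inputs, your computation upgrades the conjecture to a theorem, and even explains its exact boundary: the borderline equalities occur at $k=3$ in the even case and at $k=1,2,3$ in the odd case, where $2^k$ exactly equals the second difference, matching the exclusion of $r=3,5,7$.
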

In \cite{MMRN2}, Migliore-Miro-Roig-Nagel prove the
conjecture is true for an even number of variables.
\noindent{\bf Acknowledgements} Computations were performed using 
Macaulay2,
by Grayson and Stillman, available at: {\tt http://www.math.uiuc.edu/Macaulay2/}.
Scripts to analyze WLP are available at: {\tt http://www.math.uiuc.edu/$\sim$asecele2}.
Special thanks go to Pietro Majer for explaining 
Equation~\eqref{Eulerian} to us. 
\bibliographystyle{amsalpha}

\end{document}